%% file: HSDG.tex
\documentclass[10pt]{amsart}
\input{./mysetting.tex}
\usepackage{bm}
\usepackage{enumitem}
\newcommand{\sym}{\operatorname{sym}}
\newcommand{\Otimes}{\ensuremath{\vcenter{\hbox{\scalebox{1.5}{$\otimes$}}}}}
\newcommand{\Oplus}{\ensuremath{\vcenter{\hbox{\scalebox{1.5}{$\oplus$}}}}}

\begin{document}
\title[SDG for polyharmonic equations on polytopes]
{Hybridizable Staggered Discontinuous Galerkin Methods for Polyharmonic Equations on Polytopes}

\author{Long Chen}
\address{Department of Mathematics, University of California, Irvine, CA 92697, USA}
\email{chen-long@math.uci.edu}

\author{Xuehai Huang}
\address{School of Mathematics, Shanghai University of Finance and Economics, Shanghai 200433, China}
\email{huang.xuehai@sufe.edu.cn}

\author{Yule Sun}
\address{School of Mathematics and Computational Science, Xiangtan University, Xiangtan 411105, China}
\email{sunyuleqn@foxmail.com}

\author{Shudan Tian}
\address{School of Mathematics and Computational Science, Xiangtan University, Xiangtan 411105, China}
\email{shudan.tian@xtu.edu.cn}

\thanks{The first author was partially supported by NSF grant DMS-2309785, the third author was supported by NSFC grant 12431014, and the fourth author was supported by NSFC grant 12401483.}

\makeatletter
\@namedef{subjclassname@2020}{\textup{2020} Mathematics Subject Classification}
\makeatother
\subjclass[2020]{
65N30;   
65N12;   
15A72.   
}

\begin{abstract}
Hybridizable staggered discontinuous Galerkin methods are developed for arbitrary-order polyharmonic equations $(-\Delta)^m u=f$ on shape-regular polytopal meshes in $\mathbb R^d$, for any $m\ge1$, $d\ge2$, and polynomial degree $k\ge0$. The method uses the mixed variable $\sigma=\nabla^m u$ and a staggered primal--dual mesh to impose complementary continuity on scalar and tensor unknowns, without restrictions such as $d\ge m$. Local trace and bubble enrichments stabilize low-order tensor spaces without adding global unknowns. Hybridization localizes the tensor variable and yields an equivalent stabilization-free weak Galerkin formulation. Well-posedness and optimal energy error estimates are proved, and numerical experiments on polygonal and tetrahedral meshes confirm the predicted rates.
\end{abstract}

\maketitle


\section{Introduction}

Let $\Omega$ be a bounded polyhedral domain in $\mathbb R^d$, $d\ge2$. In this paper, we develop hybridizable staggered discontinuous Galerkin (HSDG) methods for the $2m$-th order polyharmonic problem, for $m\ge1$,
\begin{equation}\label{intro:polyharmonic}
\begin{cases}
(-1)^m\Delta^m u=f, & \textrm{in }\Omega,\\
u=\partial_n u=\cdots=\partial_n^{m-1}u=0, & \textrm{on }\partial\Omega.
\end{cases}
\end{equation}
The method is designed for general polytopal meshes, arbitrary order $m\geq 1$, arbitrary dimension $d\geq 2$, and all polynomial degrees $k\ge0$. Its main ingredients are a staggered primal--dual mesh, a mixed tensor formulation, local tensor enrichments for low-order spaces, and a hybridization procedure that reduces the global system to scalar and face unknowns.

The starting point is the mixed formulation
\begin{equation}\label{intro:mixfem}
\sigma=\nabla^m u,\qquad
(-1)^m\operatorname{div}^m\sigma=f .
\end{equation}
Formally, the relation $\sigma=\nabla^m u$ can be written as
\begin{equation*}
(\sigma,\tau)=(-1)^m(\operatorname{div}^m\tau,u),
\end{equation*}
so the scalar unknown only needs to be in $L^2(\Omega)$, while the tensor unknown carries the $H(\operatorname{div}^m)$-type regularity. For $m=1$, this is the classical mixed formulation based on $H(\operatorname{div})$-conforming spaces, including Raviart--Thomas, Brezzi--Douglas--Marini, Nédélec, and related elements~\cite{RaviartThomas1977,BrezziDouglasMarini1986,BrezziDouglasDuranFortin1987,Nedelec:1986family,Nedelec1980}. For $m=2$, several $H(\operatorname{div}\operatorname{div})$-conforming finite element spaces have recently been developed~\cite{ChenHuang2025div-div-conforming,ChenHuang2022,ChenHuang2022b,ChenHuang2020,Hu;Ma;Zhang:2020family}. For general $m$, constructing explicit $H(\operatorname{div}^m)$-conforming tensor finite elements is difficult, especially on general polytopes.

The staggered discontinuous Galerkin (SDG) framework provides a different way to impose the mixed regularity. SDG methods, initiated by Chung and Engquist~\cite{ChungEngquist06,ChungWave09}, use staggered primal and dual grids to obtain stable and locally conservative discretizations. They have been extended from triangular meshes to polygonal meshes in~\cite{LinaParkdiffusion18,LinaParkelasticity20,ZhaoParkShin19}, and a primal SDG method on polytopal meshes was recently developed in~\cite{ChenHuangParkWang2025}. The present work extends~\cite{ChenHuangParkWang2025} from second-order problems to arbitrary-order polyharmonic equations.

Given a polyhedral primal mesh $\mathcal K_h$, each element $K$ is split by connecting an interior point of $K$ to its faces, giving a refined mesh $\mathcal K_h^{\rm R}$. The dual mesh $\mathcal K_h^*$ is formed by grouping refined cells that share a primal face. The key SDG idea is to impose complementary continuity on the primal and dual meshes. The tensor variable $\sigma_h$ satisfies the required $H(\operatorname{div}^m)$-type conformity on dual elements, while the scalar variable $u_h$ is locally $H^m$-conforming on primal elements. Thus both variables are globally discontinuous, but their continuity properties are staggered and complementary. This gives a stable mixed discretization of \eqref{intro:mixfem} without constructing globally $H(\operatorname{div}^m)$-conforming tensor elements on the primal polytopal mesh.

The scalar space is chosen elementwise as $\mathbb P_{k+m}(K)$ on each primal element. The main difficulty is to construct tensor spaces $\Sigma_{h,k}$ for an arbitrary number of face-normal directions, order $m$, dimension $d$, and polynomial degree $k$. Two local enrichments are used. The trace enrichment recovers missing normal trace layers when $k<m-1$. The bubble enrichment provides enough tensor directions for the local stability construction. If $\nu_K$ is the largest number of pairwise non-parallel face normals of $K$, then tensor layers up to order $\lfloor m/\nu_K\rfloor$ are enough.

The method is naturally hybridizable. Following the hybridization framework for mixed methods~\cite{arnold1985mixed,cockburn2009unified}, we relax the primal-face continuity of the tensor traces, use a fully broken tensor space on the refined mesh, and introduce scalar trace unknowns on primal faces. The weak $m$-th gradient $\nabla^m_w$ is defined so that the tensor unknown can be eliminated element by element. After static condensation, the global problem involves only the scalar variable and its face traces and is equivalent to a weak Galerkin formulation without penalty stabilization. Stability comes from the staggered mixed structure and the local tensor trace construction, which yield coercivity of the discrete weak gradient after hybridization.


We now place the proposed method in context. The primal formulation of \eqref{intro:polyharmonic} uses the bilinear form $(\nabla^m u,\nabla^m v)$ and therefore requires an $H^m$-conforming discrete space. On simplicial meshes, this leads to highly smooth finite elements, such as those in~\cite{bramble_triangular_1970,vzenivsek1974tetrahedral,Lai;Schumaker:2007Trivariate,zhang_family_2009,HuLinWu2024}. A geometric decomposition of the simplicial lattice was introduced in~\cite{chen_geometric_2021,Chen;Huang:2022FEMcomplex3D} and used for explicit basis construction and implementation in~\cite{ChenChenGaoHuangEtAl2025}. These conforming finite element spaces are explicit, but they require high polynomial degree $2^d (m-1)+1$.

On general polytopes, conforming virtual element methods provide a flexible alternative. The $H^m$-conforming virtual elements of arbitrary degree $k\ge m$ on polytopal meshes were constructed in~\cite{ChenHuangWei2022}, generalizing earlier two- and three-dimensional constructions~\cite{BeiraodaVeigaManzini2014,AntoniettiManziniVerani2020,AntoniettiManziniScacchiVerani2021}. These methods work on arbitrary polytopes and arbitrary degree, but the local shape functions are virtual and a nontrivial stabilization term is needed.

Nonconforming methods reduce the smoothness requirement. On simplicial meshes, minimal $H^m$-nonconforming elements were constructed for $m\le d$~\cite{ming2006morley,wang2013minimal} and later extended to $m=d+1$~\cite{wu2019nonconforming}; arbitrary $m$ and $d$ can also be handled by interior penalty techniques~\cite{wu2017mathcal}. The minimal nonconforming elements correspond to the lowest-order case $k=0$ in our notation. A related minimal bubble-enriched construction was proposed in~\cite{Wuli-nonconforming}.
On general polytopes, $H^m$-nonconforming virtual elements of arbitrary degree were developed in~\cite{ChenHuang2020a,Huang2020}. These VEMs, however, require nontrivial stabilization. The present paper follows a different route: the primary object is a staggered mixed DG method, and its stability is obtained from complementary primal--dual continuity and tensor trace liftings, not from a primal nonconforming space with a stabilization term.

After local tensor elimination, the hybridized SDG method can be interpreted as a weak Galerkin or VEM-like method without penalty stabilization. In this interpretation, the scalar unknown and its face traces determine a local weak $m$-th gradient. Unlike standard VEM projections on a single polytopal element, the projection here is enriched from a polynomial space on $K$ to a piecewise polynomial space on the barycentric split $K^{\rm R}$. This larger local projection space replaces penalty stabilization.

The analysis is carried out under standard shape-regularity assumptions on the polytopal mesh and on the associated refined subcells. The constants in the stability and error estimates are uniform with respect to the mesh size $h$ and may depend only on the fixed polynomial degree, the order $m$, the dimension $d$, and the mesh regularity parameters. These assumptions are stated precisely in Section~\ref{sec:mesh}.

The remainder of this article is organized as follows. Section~\ref{sec:symtensor} introduces the notation, tensor calculus, and staggered mesh structure. Section~\ref{sec:femsymtensor} proves the tensor polynomial decompositions and local trace constructions. Section~\ref{sec:sdg} constructs the SDG finite element spaces and proves the well-posedness of the mixed method. Section~\ref{sec:hybrid} develops the hybridized SDG formulation and proves the error estimates. Section~\ref{sec:numerexperiments} reports numerical experiments supporting the theoretical results.

\section{Symmetric Tensors and Decompositions}\label{sec:symtensor}

In this section, we recall basic notation for tensor spaces, contractions, tensor differential operators, and symmetric tensors. We then use simplicial lattices to index bases of symmetric tensor spaces and to describe polynomial layers relative to a face. Finally, we derive a geometric $t$-$n$ decomposition of symmetric tensors and a face-based spanning decomposition that will be used in the construction of the local tensor spaces.

\subsection{Tensors}

Denote the space of $d$-dimensional tensors of order $m$ by $\mathbb R^{d,m}:=(\mathbb R^d)^{\otimes m}.$
Let $\{\boldsymbol e_1,\ldots,\boldsymbol e_d\}$ be an orthonormal basis of $\mathbb R^d$. Then
$$
\left\{
\boldsymbol e_{i_1}\otimes\boldsymbol e_{i_2}\otimes\cdots\otimes\boldsymbol e_{i_m}
: i_\ell\in\{1,\ldots,d\}, \ \ell=1,\ldots,m
\right\}
$$
is a basis of $\mathbb R^{d,m}$. Hence any $\tau\in\mathbb R^{d,m}$ can be written as
$$
\tau
=
\tau_{i_1,\ldots,i_m}
\boldsymbol e_{i_1}\otimes\boldsymbol e_{i_2}\otimes\cdots\otimes\boldsymbol e_{i_m},
$$
where repeated indices are summed over $1,\ldots,d$. Therefore $\dim\mathbb R^{d,m}=d^m.$ Equivalently, the basis can be represented by the integer lattice $\{1,\ldots,d\}^m$; see Fig.~\ref{fig:tensor}(a).

Let $\tau\in\mathbb R^{d,m}$ and $\gamma\in\mathbb R^{d,n}$ with $0\le n\le m$. The contraction
$$
\tau\mathbin{\lrcorner}\gamma\in\mathbb R^{d,m-n}, \quad (\tau\mathbin{\lrcorner}\gamma)_{i_1,\ldots,i_{m-n}}
:=
\tau_{i_1,\ldots,i_{m-n},j_1,\ldots,j_n}
\gamma_{j_1,\ldots,j_n}.
$$
In particular, if $n=1$, then $\tau\mathbin{\lrcorner}\gamma$ is the tensor-vector contraction, extending the matrix-vector product. If $n=m$, then
$$
\tau:\gamma
:=
\tau\mathbin{\lrcorner}\gamma
=
\tau_{j_1,\ldots,j_m}\gamma_{j_1,\ldots,j_m}
$$
defines the Euclidean inner product on $\mathbb R^{d,m}$, and the induced norm is the Frobenius norm.

For $\tau\in\mathbb R^{d,m}$ and $\boldsymbol v\in\mathbb R^d$, the outer product $\tau\otimes\boldsymbol v\in\mathbb R^{d,m+1}$ is defined by
$$
(\tau\otimes\boldsymbol v)_{i_1,\ldots,i_m,j}
=
\tau_{i_1,\ldots,i_m}v_j.
$$

Let $\Omega\subset\mathbb R^d$ be a domain. A tensor function on $\Omega$ is a tensor whose coefficients are functions on $\Omega$. For $s\ge0$, define $H^s(\Omega;\mathbb R^{d,m})$ as the space of tensor functions whose coefficients belong to $H^s(\Omega)$. Let $\mathbb P_k(T)$ be the space of polynomials of degree at most $k$ on $T$, with the convention $\mathbb P_k(T)=\{0\}$ for $k<0.$
For a finite-dimensional linear space $V$, define
$$
\mathbb P_k(T;V):=\mathbb P_k(T)\otimes V.
$$
Equivalently, $\mathbb P_k(T;V)$ is the space of $V$-valued polynomials of degree at most $k$ on $T$.

For $\tau\in H^1(\Omega;\mathbb R^{d,m})$, the divergence is the tensor in $L^2(\Omega;\mathbb R^{d,m-1})$ defined by
$$
(\operatorname{div}\tau)_{i_1,\ldots,i_{m-1}}
=
\partial_j\tau_{i_1,\ldots,i_{m-1},j}.
$$
For $0\le s\le m$, let $\operatorname{div}^s$ denote the operator obtained by applying divergence $s$ times. With $\nabla=(\partial_1,\ldots,\partial_d)^{\intercal}$, this can be written symbolically as
\begin{equation}\label{eq:tensor-div-symbolic}
\operatorname{div}^{s}\tau
=
\tau\mathbin{\lrcorner}\nabla^s .
\end{equation}

\begin{figure}[htbp]
\centering
\subfigure[The cubic lattice $(1:3)^3$ represents the tensor space $\mathbb R^{3,3}$, which has dimension $27$.]{
\begin{minipage}[t]{0.425\linewidth}
\centering
\includegraphics*[width=3.5cm]{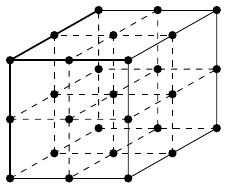}
\end{minipage}}%
\qquad
\subfigure[The simplicial lattice $\mathbb T^2_3=\{\alpha\in\mathbb N^3:\alpha_0+\alpha_1+\alpha_2=3\}$ represents the symmetric tensor space $\mathbb S^{3,3}$, which has dimension $10$.]{
\begin{minipage}[t]{0.475\linewidth}
\centering
\includegraphics*[width=3.2cm]{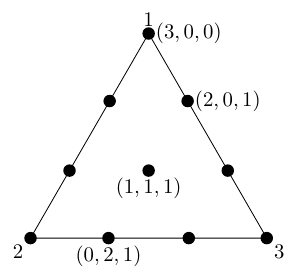}
\end{minipage}}
\caption{Lattice representations of tensor and symmetric tensor spaces.}
\label{fig:tensor}
\end{figure}

\subsection{Symmetric tensors}

Denote by $\mathbb S^{d,m}$ the space of $d$-dimensional symmetric tensors of order $m$. It is the subspace of $\mathbb R^{d,m}$ consisting of tensors $\tau$ such that
$$
\tau_{i_1,\ldots,i_m}
=
\tau_{i_{\sigma(1)},\ldots,i_{\sigma(m)}}
\qquad
\text{for all }\sigma\in\mathfrak S_m,
$$
where $\mathfrak S_m$ is the symmetric group on $\{1,\ldots,m\}$.

The symmetrization operator $\sym:\mathbb R^{d,m}\to\mathbb S^{d,m}$ is defined by
$$
\sym(\boldsymbol v_1\otimes\cdots\otimes\boldsymbol v_m)
=
\frac{1}{m!}
\sum_{\sigma\in\mathfrak S_m}
\boldsymbol v_{\sigma(1)}
\otimes\cdots\otimes
\boldsymbol v_{\sigma(m)},
\qquad
\boldsymbol v_1,\ldots,\boldsymbol v_m\in\mathbb R^d,
$$
and extended by linearity.

For $v\in H^m(\Omega)$, define the $m$-th gradient by
$$
\nabla^m v
:=
\nabla^{\otimes m}v
=
(\underbrace{\nabla\otimes\cdots\otimes\nabla}_{m})v, \quad 
(\nabla^m v)_{i_1,\ldots,i_m}
=
\partial_{i_1}\cdots\partial_{i_m}v.
$$
If $v$ is sufficiently smooth, mixed partial derivatives commute. Hence $\nabla^m v\in\mathbb S^{d,m}.$

Let $\alpha=(\alpha_1,\ldots,\alpha_d)\in\mathbb N^d$ be a multi-index, with $|\alpha|=\sum_{i=1}^d\alpha_i$. For $v\in C^m$, the $m$-th derivatives can also be indexed by
$$
D^\alpha v
:=
\frac{\partial^m v}
{\partial x_1^{\alpha_1}\cdots\partial x_d^{\alpha_d}},
\qquad
|\alpha|=m.
$$
This multi-index notation gives a simplicial lattice representation of symmetric tensors.

\subsection{Simplicial lattice representation}

A simplicial lattice~\cite{chen_geometric_2021,Chen;Huang:2021Geometric} of degree $r$ and dimension $d$ is
$$
\mathbb T^{d}_r
=
\left\{
\alpha=(\alpha_0,\alpha_1,\ldots,\alpha_d)\in\mathbb N^{d+1}
:\alpha_0+\alpha_1+\cdots+\alpha_d=r
\right\}.
$$
Each $\alpha\in\mathbb T^d_r$ is called a node of the lattice. Figure~\ref{fig:tensor}(b) shows $\mathbb T^2_3$.

For the symmetric tensor $\sym(\boldsymbol e_{i_1}\otimes\cdots\otimes\boldsymbol e_{i_m})$, the order of the factors in
$\boldsymbol e_{i_1}\otimes\cdots\otimes\boldsymbol e_{i_m}$ is irrelevant up to permutation. Only the number of occurrences of each basis vector is relevant. Let $\alpha_k$ be the number of factors equal to $\boldsymbol e_k$ for $k=1,\ldots,d$. Then $\alpha=(\alpha_1,\ldots,\alpha_d)$ satisfies $|\alpha|=m$. This set of multi-indices is identified with the lattice $\mathbb T^{d-1}_m$. Therefore
$$
\left\{
\sym\!\left(\Otimes_{k=1}^d \boldsymbol e_k^{\alpha_k}\right)
:\alpha\in\mathbb T^{d-1}_m
\right\},
$$
where $\boldsymbol e_k^{\alpha_k}:=\boldsymbol e_k^{\otimes\alpha_k}$, is a basis of $\mathbb S^{d,m}$. Hence
$$
\dim\mathbb S^{d,m}
=
|\mathbb T^{d-1}_m|
=
\binom{m+d-1}{m},
$$
which is much smaller than $\dim\mathbb R^{d,m}=d^m$.

\subsection{Layer decomposition of the simplicial lattice}

Let $F$ be a face of a $d$-simplex $T$. We label the barycentric coordinate opposite to $F$ by index $0$. Then each lattice point $\alpha\in\mathbb T^d_k$ can be written as
$$
\alpha
=
(\alpha_0,\alpha_1,\ldots,\alpha_d)
=
\alpha_0\boldsymbol e_0+(0,\alpha_F),
$$
where $\boldsymbol e_0=(1,0,\ldots,0)$ and
$\alpha_F=(\alpha_1,\ldots,\alpha_d)\in\mathbb T^{d-1}_{k-\alpha_0}.$
Define the distance of $\alpha$ to $F$ by
$$
\operatorname{dist}(\alpha,F):=\alpha_0,
\qquad
0\le \alpha_0\le k.
$$
For $0\le j\le k$, define the $j$-th layer by
$$
L_k^d(F,j)
:=
\left\{
j\boldsymbol e_0+(0,\alpha_F):
\alpha_F\in\mathbb T^{d-1}_{k-j}
\right\}.
$$
Thus $L_k^d(F,j)$ is the set of lattice points in $\mathbb T^d_k$ at distance $j$ from $F$. The union of the layers $L_k^d(F,j)$ for $j=\ell,\ldots,k$ is a translated copy of the smaller lattice $\mathbb T^d_{k-\ell}$; see Fig.~\ref{fig:tensor11}(a).

\begin{lemma}\label{lm:layerdecomposition}
The simplicial lattice admits the layer decomposition, for $0\le \ell\le k$,
\begin{equation}\label{eq:latticedecomposition}
\mathbb T^d_k
=
\bigsqcup_{j=0}^{k} L_k^d(F,j)
=
\bigsqcup_{j=0}^{\ell-1}L_k^d(F,j)
\sqcup
(\ell\boldsymbol e_0+\mathbb T^d_{k-\ell}).
\end{equation}
\end{lemma}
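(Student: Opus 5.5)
The plan is to prove both equalities directly from the definitions. The only tool is the coordinate $\alpha_0=\operatorname{dist}(\alpha,F)$, which sorts the lattice points of $\mathbb T^d_k$ into layers.

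\textbf{First equality.} Consider the map $\alpha\mapsto\alpha_0$ on $\mathbb T^d_k$. Since every $\alpha_i\ge0$ and $\sum_i\alpha_i=k$, the value $\alpha_0$ lies in $\{0,\ldots,k\}$.

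For a fixed $j$ in this range, the fiber $\{\alpha\in\mathbb T^d_k:\alpha_0=j\}$ coincides with $L_k^d(F,j)$. Indeed, if $\alpha_0=j$, then $\alpha_F=(\alpha_1,\ldots,\alpha_d)$ is a nonnegative integer vector summing to $k-j$, so $\alpha_F\in\mathbb T^{d-1}_{k-j}$ and $\alpha=j\boldsymbol e_0+(0,\alpha_F)$. Conversely, any point $j\boldsymbol e_0+(0,\alpha_F)$ with $\alpha_F\in\mathbb T^{d-1}_{k-j}$ has nonnegative entries summing to $k$, and its $0$-th coordinate equals $j$.

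The fibers of a function are pairwise disjoint and cover its domain. This gives $\mathbb T^d_k=\bigsqcup_{j=0}^k L_k^d(F,j)$.

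\textbf{Second equality.} Split the first union at $j=\ell$. It then suffices to show
$$\bigsqcup_{j=\ell}^k L_k^d(F,j)=\{\alpha\in\mathbb T^d_k:\alpha_0\ge\ell\}=\ell\boldsymbol e_0+\mathbb T^d_{k-\ell}.$$

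For the inclusion from left to right, take $\alpha$ with $\alpha_0\ge\ell$ and set $\beta=\alpha-\ell\boldsymbol e_0$. All entries of $\beta$ are nonnegative, and their sum is $k-\ell$, so $\beta\in\mathbb T^d_{k-\ell}$.

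For the reverse inclusion, take $\beta\in\mathbb T^d_{k-\ell}$. The point $\ell\boldsymbol e_0+\beta$ has nonnegative entries summing to $k$, and its $0$-th coordinate is $\ell+\beta_0\ge\ell$.

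The condition $0\le\ell\le k$ guarantees that $\mathbb T^d_{k-\ell}$ is a genuine (nonempty) lattice. When $\ell=0$, the first union on the right-hand side is empty and the statement is trivial.

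\textbf{Main obstacle.} There is no real obstacle; the argument is pure bookkeeping. The only point needing care is the embedding $(0,\alpha_F)$, which pads $\alpha_F\in\mathbb N^d$ to a vector in $\mathbb N^{d+1}$. One must check that the translation by $\ell\boldsymbol e_0$ acts only on the $0$-th coordinate, which is what makes the identification with $\mathbb T^d_{k-\ell}$ a bijection.
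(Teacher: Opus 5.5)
Your proposal is correct and follows the paper's proof: the first identity comes from sorting each $\alpha\in\mathbb T^d_k$ by its unique value $\alpha_0=j$, and the second comes from identifying the points with $\alpha_0\ge\ell$ with $\ell\boldsymbol e_0+\mathbb T^d_{k-\ell}$ via $\beta=\alpha-\ell\boldsymbol e_0$. The only difference is that you check both inclusions explicitly, whereas the paper states the translation step in one line.
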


\begin{proof}
Each $\alpha\in\mathbb T^d_k$ has a unique first component $\alpha_0=j$, with $0\le j\le k$. Hence $\alpha$ belongs to exactly one layer $L_k^d(F,j)$, which proves the first identity in \eqref{eq:latticedecomposition}. The remaining points after removing the first $\ell$ layers are precisely those with $\alpha_0\ge\ell$. Writing $\alpha=\ell\boldsymbol e_0+\beta$ gives $\beta\in\mathbb T^d_{k-\ell}$, and this proves the second identity.
\end{proof}

\begin{figure}[htbp]
\centering
\subfigure[$k\ge m$.]{
\begin{minipage}[t]{0.29\linewidth}
\centering
\includegraphics*[width=1.25in]{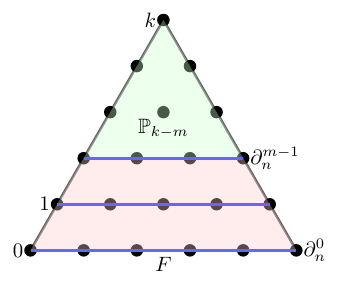}
\end{minipage}}%
\subfigure[$1\le k<m$.]{
\begin{minipage}[t]{0.29\linewidth}
\centering
\includegraphics*[width=1.15in]{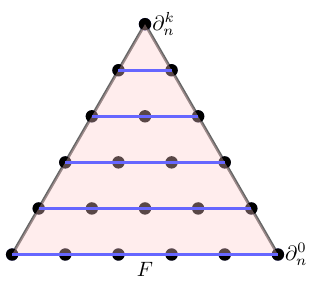}
\end{minipage}}%
\subfigure[$k=0$.]{
\begin{minipage}[t]{0.29\linewidth}
\centering
\includegraphics*[width=1.02in]{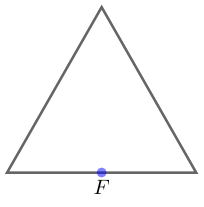}
\end{minipage}}
\caption{Layer decomposition of a two-dimensional simplicial lattice.}
\label{fig:tensor11}
\end{figure}

\subsection{Layer decomposition of polynomial spaces}
Let $T$ be a $d$-simplex with barycentric coordinates $\{\lambda_i, i=0,\ldots, d\}$. 
We will apply the layer decomposition \eqref{eq:latticedecomposition} to the Bernstein basis of $\mathbb P_k(T)$,
$$
\mathbb P_k(T)
=
\operatorname{span}\{\lambda^\alpha:=
\lambda_0^{\alpha_0}\lambda_1^{\alpha_1}\cdots\lambda_d^{\alpha_d} :\alpha\in\mathbb T^d_k\}.
$$
Since $\lambda_0$ is the barycentric coordinate opposite to $F$, the $j$-th lattice layer gives
$$
\operatorname{span}\{\lambda^\alpha:\alpha\in L_k^d(F,j)\}
=
\lambda_0^j\, \mathbb P_{k-j}(F).
$$

\begin{lemma}\label{lem:poly-layer-decomp}
The polynomial space $\mathbb P_k(T)$ has the layer decomposition, for $0\le i\le k$,
\begin{equation}\label{eq:poly-layer-decomp}
\mathbb P_k(T)
=
\Oplus_{j=0}^{k}
\lambda_0^j\, \mathbb P_{k-j}(F)
=
\left(
\Oplus_{j=0}^{i-1}
\lambda_0^j\, \mathbb P_{k-j}(F)
\right)
\Oplus
\lambda_0^i\, \mathbb P_{k-i}(T).
\end{equation}
\end{lemma}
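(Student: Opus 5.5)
The plan is to transfer the lattice decomposition of Lemma~\ref{lm:layerdecomposition} to polynomials through the Bernstein basis. The one preliminary fact is that $\{\lambda^\alpha:\alpha\in\mathbb T^d_k\}$ is a basis of $\mathbb P_k(T)$, not just a spanning set. To see this, note that the number of these monomials is $|\mathbb T^d_k|=\binom{k+d}{d}=\dim\mathbb P_k(T)$, and that they span $\mathbb P_k(T)$. The spanning holds because $1=\sum_i\lambda_i$ and $x_j$ are affine combinations of the $\lambda_i$: homogenizing any monomial of degree at most $k$ with powers of $\sum_i\lambda_i$ writes it as a combination of degree-$k$ monomials in $\lambda$. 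A spanning set of the right size is linearly independent.

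\textbf{First identity.} For each $j$, the polynomials $\lambda^\alpha$ with $\alpha\in L^d_k(F,j)$ are $\lambda_0^j\lambda_F^{\alpha_F}$ with $\alpha_F\in\mathbb T^{d-1}_{k-j}$. The factors $\lambda_F^{\alpha_F}$ are the Bernstein polynomials of degree $k-j$ in $\lambda_1,\dots,\lambda_d$. We identify their span with $\mathbb P_{k-j}(F)$ via the extension constant in the normal direction, so that it equals $\lambda_0^j\mathbb P_{k-j}(F)$, as stated before the lemma. The lattice $\mathbb T^d_k$ is a disjoint union of the layers by Lemma~\ref{lm:layerdecomposition}, and the $\lambda^\alpha$ form a basis. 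Hence the partition of the basis index set gives a direct sum decomposition of $\mathbb P_k(T)$ into the spans of the blocks. This proves the first equality.

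\textbf{Second identity.} Apply the second lattice identity in \eqref{eq:latticedecomposition} with $\ell=i$. The basis functions indexed by $i\boldsymbol e_0+\mathbb T^d_{k-i}$ are $\lambda_0^i\lambda^\beta$ with $\beta\in\mathbb T^d_{k-i}$. By the basis fact applied to degree $k-i$, these span exactly $\lambda_0^i\mathbb P_{k-i}(T)$. Directness again follows from the disjointness of the index sets together with the linear independence of the full Bernstein basis.

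\textbf{Main obstacle.} The only real point is the identification $\operatorname{span}\{\lambda_F^{\alpha_F}\}=\mathbb P_{k-j}(F)$. It requires reading $\mathbb P_{k-j}(F)$ as polynomials on $T$ generated by $\lambda_1,\dots,\lambda_d$, which is the convention already adopted in the displayed identity preceding the lemma. The rest is bookkeeping.
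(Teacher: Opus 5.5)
Your route is the paper's: you partition the Bernstein basis $\{\lambda^\alpha\}_{\alpha\in\mathbb T^d_k}$ by the lattice layers of Lemma~\ref{lm:layerdecomposition}. For the second identity the paper regroups the layers $j\ge i$ and reapplies the first identity at degree $k-i$, while you read off the sublattice $i\boldsymbol e_0+\mathbb T^d_{k-i}$ directly, which amounts to the same thing. Your explicit check that the Bernstein polynomials are linearly independent is a detail the paper leaves implicit, and it is what makes the sums direct. One step, however, is wrong as written.

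In the first identity you identify $\operatorname{span}\{\lambda_F^{\alpha_F}:\alpha_F\in\mathbb T^{d-1}_{k-j}\}$ with $\mathbb P_{k-j}(F)$ ``via the extension constant in the normal direction.'' These are different spaces as soon as $k-j\ge1$. The Bernstein span contains $(\lambda_1+\cdots+\lambda_d)^{k-j}=(1-\lambda_0)^{k-j}$, which varies along the normal to $F$. Concretely, take $T$ with vertices $(0,1),(0,0),(1,0)$ and $F$ on $\{y=0\}$. The degree-one span is $\operatorname{span}\{1-x-y,\,x\}$, while the normal-constant extension of $\mathbb P_1(F)$ is $\operatorname{span}\{1,x\}$. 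So under that convention your claim that the span of layer $j$ equals $\lambda_0^j\mathbb P_{k-j}(F)$ fails. The decomposition itself would still hold, but with different summands, and you would prove it with $t$-$n$ monomials $x_n^j\,p(x_F)$ rather than the Bernstein basis.

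The identification that makes your argument work is the paper's, stated explicitly in Section~\ref{sec:femsymtensor}. There a polynomial on $F$ is extended by the same barycentric monomials $\lambda_F^{\alpha_F}$, i.e.\ homogeneously of degree $k-j$ in $\lambda_1,\dots,\lambda_d$. Arbitrary polynomials in $\lambda_1,\dots,\lambda_d$ would not do: they give all of $\mathbb P_{k-j}(T)$ and destroy directness. This is essentially what your last paragraph says. Delete the normal-constant phrase and state the homogeneity explicitly, and the proof is complete.
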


\begin{proof}
The Bernstein basis functions are indexed by $\mathbb T^d_k$. By \eqref{eq:latticedecomposition}, each index $\alpha\in\mathbb T^d_k$ belongs to exactly one layer $L_k^d(F,j)$. If $\alpha\in L_k^d(F,j)$, then $\alpha_0=j$ and
$$
\lambda^\alpha
=
\lambda_0^j\lambda_F^{\alpha_F},
\qquad
\alpha_F\in\mathbb T^{d-1}_{k-j}.
$$
Thus the functions from this layer span $\lambda_0^j\, \mathbb P_{k-j}(F)$. Summing over $j=0,\ldots,k$ gives the first identity \eqref{eq:poly-layer-decomp}. The second identity follows by grouping all layers with $j\ge i$:
$$
\Oplus_{j=i}^{k}
\lambda_0^j\, \mathbb P_{k-j}(F)
=
\lambda_0^i
\Oplus_{j=0}^{k-i}
\lambda_0^{j}\, \mathbb P_{k-i-j}(F)
=
\lambda_0^i\, \mathbb P_{k-i}(T).
$$
The translated sublattice is illustrated in Fig.~\ref{fig:tensor11} (a). 
\end{proof}

\subsection{Layer decomposition of symmetric tensors}

We use the layer decomposition of the lattice $\mathbb T^{d-1}_m$ to decompose the symmetric tensor space $\mathbb S^{d,m}$ with respect to a face $F$.

Choose a local $t$-$n$ coordinate system associated with $F$,
$$
(\boldsymbol t_1,\ldots,\boldsymbol t_{d-1},\boldsymbol n_F),
$$
where $\boldsymbol t_1,\ldots,\boldsymbol t_{d-1}$ form an orthonormal basis of the tangent space of $F$ and $\boldsymbol n_F$ is a unit normal vector. 

For a tangential multi-index $\alpha_F=(\alpha_1,\ldots,\alpha_{d-1})$, define
$$
\boldsymbol t^{\alpha_F}
:=
\Otimes_{i=1}^{d-1}\boldsymbol t_i^{\alpha_i}.
$$
Using the $t$-$n$ basis, the symmetric tensor space is spanned by
$$
\mathbb S^{d,m}
=
\operatorname{span}
\left\{
\sym\big(
\boldsymbol n_F^\ell\otimes\boldsymbol t^{\alpha_F}
\big)
:
0\le\ell\le m,\ 
\alpha_F\in\mathbb T^{d-2}_{m-\ell}
\right\}.
$$
Define the $\ell$-th normal layer by
$$
\mathbb S^{(\ell)}_{F,m}
:=
\operatorname{span}
\left\{
\sym\big(
\boldsymbol n_F^\ell\otimes\boldsymbol t^{\alpha_F}
\big)
:
\alpha_F\in\mathbb T^{d-2}_{m-\ell}
\right\}, \quad 0\le\ell\le m.
$$

\begin{lemma}\label{lem:tn-decomp}
The symmetric tensor space $\mathbb S^{d,m}$ admits the orthogonal $t$-$n$ decomposition
\begin{equation}\label{eq:Sdecomp0}
\mathbb S^{d,m}
=
\Oplus_{\ell=0}^{m}
\mathbb S^{(\ell)}_{F,m}
=
\left(
\Oplus_{\ell=0}^{r-1}
\mathbb S^{(\ell)}_{F,m}
\right)
\Oplus
\sym\big(
\boldsymbol n_F^r\otimes\mathbb S^{d,m-r}
\big),
\qquad 0\le r\le m .
\end{equation}
\end{lemma}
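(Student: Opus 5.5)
The plan is to work in the $t$-$n$ frame $(\boldsymbol t_1,\ldots,\boldsymbol t_{d-1},\boldsymbol n_F)$, which is an orthonormal basis of $\mathbb R^d$. We identify $\mathbb S^{d,m}$ with its monomial basis relative to this frame, exactly as in the simplicial lattice representation with $\{\boldsymbol e_i\}$ replaced by the frame vectors. Writing a lattice point of $\mathbb T^{d-1}_m$ as $(\ell,\alpha_F)$, with $\ell$ the number of factors equal to $\boldsymbol n_F$ and $\alpha_F\in\mathbb T^{d-2}_{m-\ell}$, the tensors $\sym(\boldsymbol n_F^\ell\otimes\boldsymbol t^{\alpha_F})$ form a basis of $\mathbb S^{d,m}$. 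Grouping this basis by $\ell$ is the layer decomposition of Lemma~\ref{lm:layerdecomposition} applied to $\mathbb T^{d-1}_m$ with $\ell=\alpha_0$. This gives the first identity as a direct sum, and each $\mathbb S^{(\ell)}_{F,m}$ has the stated basis.

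For orthogonality, I would expand the symmetric tensors in the full tensor basis. The tensor $\sym(\boldsymbol n_F^\ell\otimes\boldsymbol t^{\alpha_F})$ is a positive multiple of the sum of all words $\boldsymbol v_1\otimes\cdots\otimes\boldsymbol v_m$, with each $\boldsymbol v_i$ in the frame, having multiplicity pattern $(\ell,\alpha_F)$. Since the frame is orthonormal, the products $\boldsymbol v_1\otimes\cdots\otimes\boldsymbol v_m$ are orthonormal in $\mathbb R^{d,m}$ under the Frobenius product. Two sums of words with different multiplicity patterns have disjoint supports and are therefore orthogonal. In particular, distinct layers $\ell\neq\ell'$ are orthogonal, which makes the decomposition orthogonal; the basis within a layer is orthogonal too.

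For the second identity, it suffices to show that $\sym(\boldsymbol n_F^r\otimes\mathbb S^{d,m-r})=\bigoplus_{\ell=r}^m\mathbb S^{(\ell)}_{F,m}$. Expand $\mathbb S^{d,m-r}$ in the frame basis $\sym(\boldsymbol n_F^{j}\otimes\boldsymbol t^{\beta})$ with $\beta\in\mathbb T^{d-2}_{m-r-j}$ and $0\le j\le m-r$. Symmetrization is idempotent, and $\sym(a\otimes\sym(b))=\sym(a\otimes b)$ because averaging over $\mathfrak S_m$ absorbs averaging over the subgroup permuting the last $m-r$ slots. Hence
\[
\sym\bigl(\boldsymbol n_F^r\otimes\sym(\boldsymbol n_F^j\otimes\boldsymbol t^\beta)\bigr)=\sym(\boldsymbol n_F^{r+j}\otimes\boldsymbol t^\beta).
\]
As $(j,\beta)$ ranges over its index set, $(r+j,\beta)$ ranges exactly over all pairs $(\ell,\alpha_F)$ with $\ell\ge r$, which is the second identity of Lemma~\ref{lm:layerdecomposition}, the translate $r\boldsymbol e_0+\mathbb T^{d-1}_{m-r}$. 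So the spanning sets coincide. Combined with the first identity, this gives \eqref{eq:Sdecomp0}.

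The only mildly delicate point is the identity $\sym(a\otimes\sym b)=\sym(a\otimes b)$ and the claim that the frame monomials are a basis. The latter follows from the standard basis case because $\sym$ commutes with the orthogonal change of frame. Everything else is bookkeeping on lattice indices.
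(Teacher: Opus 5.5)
Your proposal is correct and takes essentially the same route as the paper. You expand in the orthonormal $t$-$n$ frame, group the symmetrized monomials by the number $\ell$ of normal factors, get orthogonality from distinct multiplicity patterns, and derive the second identity by factoring out $\boldsymbol n_F^r$ and using the first identity at order $m-r$; you also make explicit two steps the paper leaves implicit, namely the disjoint-support argument for Frobenius orthogonality and the identity $\sym(a\otimes\sym b)=\sym(a\otimes b)$.
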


\begin{proof}
The set $\{\boldsymbol t_1,\ldots,\boldsymbol t_{d-1},\boldsymbol n_F\}$ is an orthonormal basis of $\mathbb R^d$. Hence the symmetrized tensor products formed from this basis span $\mathbb S^{d,m}$. The subspace $\mathbb S^{(\ell)}_{F,m}$ consists of tensors with exactly $\ell$ normal factors. Terms with different numbers of normal factors are orthogonal under the Frobenius inner product. Thus the first identity follows.

For the second identity, we use
$$
\Oplus_{\ell=r}^{m}
\mathbb S^{(\ell)}_{F,m}
=
\left\{
\sym(\boldsymbol n_F^r\otimes \eta):
\eta\in
\Oplus_{\ell=0}^{m-r}
\mathbb S^{(\ell)}_{F,m-r}
\right\}
=
\sym\big(\boldsymbol n_F^r\otimes \mathbb S^{d,m-r}\big).
$$
Indeed, after factoring out $r$ normal vectors, by the first identity, $\Oplus_{\ell=0}^{m-r}
\mathbb S^{(\ell)}_{F,m-r}
= \mathbb S^{d,m-r}$. Combining this with the first decomposition proves \eqref{eq:Sdecomp0}.
\end{proof}


\subsection{Geometric decomposition using all faces}

\begin{definition}
For a polytopal element $K\subset\mathbb R^d$, define $\nu_K$ as the largest number of faces of $K$ whose unit outward normal vectors are pairwise non-parallel. Equivalently, $\nu_K$ is the largest integer such that there exist faces $F_1,\ldots,F_{\nu_K}\subset\partial K$ with unit outward normal vectors $\boldsymbol n_{F_1},\ldots,\boldsymbol n_{F_{\nu_K}}$ satisfying
$$
\boldsymbol n_{F_i}
\neq
c\boldsymbol n_{F_j},
\qquad
i\neq j,\quad c\in\mathbb R\setminus\{0\}.
$$
\end{definition}

If $K$ is a bounded full-dimensional polytope, then
$$
d
\le
\nu_K
\le
\#\{F:\ F\subset\partial K\}.
$$
Indeed, if the face normals had fewer than $d$ pairwise non-parallel directions, then they would span a proper subspace of $\mathbb R^d$. A nonzero vector orthogonal to this span would be tangent to all supporting hyperplanes of $K$, contradicting the boundedness of $K$. The upper bound follows from the definition. The lower bound $\nu_K=d$ is achieved by parallelepipeds, while for a $d$-simplex one has $\nu_K=d+1$.

For a fixed $K$, write
$\nu:=\nu_K$. Then
$$
\nu(\left\lfloor\frac{m}{\nu}\right\rfloor+1)>m.
$$
This elementary inequality will be used in the proof below.

\begin{lemma}\label{lem:decomposition_sdm2}
Let $K$ be a polytopal element, and let $F_1,\ldots,F_\nu\subset\partial K$ be faces whose outward unit normal vectors $\boldsymbol n_1,\ldots,\boldsymbol n_\nu$ are pairwise non-parallel. Then
\begin{equation}\label{simplexmeshcondition}
\mathbb S^{d,m}
=
\sum_{i=1}^{\nu}
\sum_{\ell=0}^{\left\lfloor\frac{m}{\nu}\right\rfloor}
\mathbb S^{(\ell)}_{F_i,m}.
\end{equation}
\end{lemma}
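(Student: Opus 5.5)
Since every $\mathbb S^{(\ell)}_{F_i,m}$ is a subspace of $\mathbb S^{d,m}$, the inclusion ``$\supseteq$'' in \eqref{simplexmeshcondition} is trivial. For the reverse inclusion I would argue by duality: it suffices to show that any $\sigma\in\mathbb S^{d,m}$ that is Frobenius-orthogonal to every $\mathbb S^{(\ell)}_{F_i,m}$, for $1\le i\le\nu$ and $0\le\ell\le q:=\lfloor m/\nu\rfloor$, must vanish. To exploit this condition, I would pass to the standard isomorphism between $\mathbb S^{d,m}$ and homogeneous polynomials of degree $m$,
$$
\sigma\mapsto p_\sigma(\boldsymbol x):=\sigma:\boldsymbol x^{\otimes m}.
$$
This map is injective on symmetric tensors, since $\sigma$ is recovered by polarization.

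The key identity is that for $\boldsymbol v_1,\ldots,\boldsymbol v_m\in\mathbb R^d$,
$$
\partial_{\boldsymbol v_1}\cdots\partial_{\boldsymbol v_m}p_\sigma=m!\,\sigma:(\boldsymbol v_1\otimes\cdots\otimes\boldsymbol v_m)=m!\,\sigma:\sym(\boldsymbol v_1\otimes\cdots\otimes\boldsymbol v_m),
$$
where the last equality uses the symmetry of $\sigma$. This is a routine check by expanding $\boldsymbol x^{\otimes m}$.

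Now fix a face $F_i$ and use the orthonormal $t$-$n$ coordinates $(s_1,\ldots,s_{d-1},z)$ associated with $F_i$, where $z=\boldsymbol n_i\cdot\boldsymbol x$ and $s_j=\boldsymbol t_j\cdot\boldsymbol x$. In these coordinates write
$$
p_\sigma=\sum_{\ell=0}^m z^\ell g_\ell(s),
$$
with $g_\ell$ homogeneous of degree $m-\ell$ in the tangential variables $s$. The coefficient of the monomial $z^\ell s^{\alpha_F}$ is, up to a nonzero factorial constant, $\partial_z^\ell\partial_s^{\alpha_F}p_\sigma=m!\,\sigma:\sym(\boldsymbol n_i^\ell\otimes\boldsymbol t^{\alpha_F})$. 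Hence orthogonality of $\sigma$ to $\mathbb S^{(\ell)}_{F_i,m}$ for all $\ell\le q$ is equivalent to $g_0=\cdots=g_q=0$. In other words, $p_\sigma$ is divisible by $(\boldsymbol n_i\cdot\boldsymbol x)^{q+1}$.

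Finally I would use unique factorization in $\mathbb R[x_1,\ldots,x_d]$. The linear forms $\boldsymbol n_i\cdot\boldsymbol x$ are irreducible, and they are pairwise non-associate because the normals are pairwise non-parallel. Therefore $p_\sigma$ is divisible by $\prod_{i=1}^{\nu}(\boldsymbol n_i\cdot\boldsymbol x)^{q+1}$, a polynomial of degree $\nu(q+1)>m$, using the elementary inequality $\nu(\lfloor m/\nu\rfloor+1)>m$ noted above. A nonzero homogeneous polynomial of degree $m$ cannot have a divisor of larger degree, so $p_\sigma=0$ and hence $\sigma=0$.

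The main point requiring care is the translation between orthogonality to a normal layer and the vanishing of the coefficients of $z^\ell$ in $p_\sigma$. Once the derivative identity above is in place, this follows from the orthogonality of the $t$-$n$ decomposition in Lemma~\ref{lem:tn-decomp}. The divisibility step is then elementary.
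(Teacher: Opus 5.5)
Your proof is correct and is essentially the paper's argument. You show that the orthogonal complement is trivial by passing to $p_\sigma(x)=\sigma:x^{\otimes m}$, deduce $(\boldsymbol n_i\cdot x)^{\lfloor m/\nu\rfloor+1}\mid p_\sigma$ for each $i$, and conclude by coprimality of the non-proportional linear forms together with the degree bound $\nu(\lfloor m/\nu\rfloor+1)>m$. Your derivative identity $\partial_{\boldsymbol v_1}\cdots\partial_{\boldsymbol v_m}p_\sigma=m!\,\sigma:(\boldsymbol v_1\otimes\cdots\otimes\boldsymbol v_m)$ supplies the justification, only asserted in the paper, that orthogonality to the layers $\ell\le\lfloor m/\nu\rfloor$ kills exactly the coefficients of $z^0,\ldots,z^{\lfloor m/\nu\rfloor}$.
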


\begin{proof}
Let
$
\mathcal W
:=
\sum_{i=1}^{\nu}
\sum_{\ell=0}^{\left\lfloor\frac{m}{\nu}\right\rfloor}
\mathbb S^{(\ell)}_{F_i,m}.
$
It is enough to prove $\mathcal W^\perp=\{0\}$.

Associate to each $\tau\in\mathbb S^{d,m}$ the homogeneous polynomial
$$
p_\tau(x)
:=
\tau:x^{\otimes m}
=
\tau_{i_1,\ldots,i_m}x_{i_1}\cdots x_{i_m},
\qquad x\in\mathbb R^d.
$$
This map is injective.

Suppose $\tau\in\mathcal W^\perp$. Fix $i\in\{1,\ldots,\nu\}$ and decompose
$$
x
=
x_F+x_n\boldsymbol n_i,
\qquad
x_F\perp \boldsymbol n_i,
\quad
x_n=x\cdot\boldsymbol n_i.
$$
Since $\tau$ is orthogonal to $\mathbb S^{(\ell)}_{F_i,m}$ for $\ell=0,\ldots,\left\lfloor\frac{m}{\nu}\right\rfloor$, the polynomial $p_\tau(x_F+x_n\boldsymbol n_i)$ has no terms $x_n^0,\ldots,x_n^{\left\lfloor\frac{m}{\nu}\right\rfloor}$. Hence
$$
p_\tau(x_F+x_n\boldsymbol n_i)
=
x_n^{\left\lfloor\frac{m}{\nu}\right\rfloor+1}q_i(x_F,x_n),
$$
or equivalently,
$$
(x\cdot\boldsymbol n_i)^{\left\lfloor\frac{m}{\nu}\right\rfloor+1}
\mid
p_\tau(x).
$$
This holds for every $i=1,\ldots,\nu$.

Since the linear forms $x\cdot\boldsymbol n_i$, $i=1,\ldots,\nu$, are pairwise non-proportional, they are pairwise coprime. Therefore
$$
\prod_{i=1}^{\nu}
(x\cdot\boldsymbol n_i)^{\left\lfloor\frac{m}{\nu}\right\rfloor+1}
\mid
p_\tau(x).
$$
The degree of this divisor is $\nu(\left\lfloor\frac{m}{\nu}\right\rfloor+1)>m$, whereas $p_\tau$ has degree $m$. Hence $p_\tau\equiv0$. By injectivity of $\tau\mapsto p_\tau$, we obtain $\tau=0$. Thus $\mathcal W^\perp=\{0\}$, and hence $\mathcal W=\mathbb S^{d,m}$.
\end{proof}

\begin{example}[Simplex and parallelepiped]\rm
Let $K$ be a $d$-simplex. Then $\nu=d+1$. If $d\ge m$, then $\lfloor m/(d+1)\rfloor=0$, and 
the tangential tensor spaces from all faces span $\mathbb S^{d,m}$.

If $d<m$, then higher normal layers are needed in general. 
For example, if $d=2$ and $m=5$, then $\nu=3$. Hence
$$
\mathbb S^{2,5}
=
\sum_{F\subset\partial K}
\left(
\mathbb S^{(0)}_{F,5}
+
\mathbb S^{(1)}_{F,5}
\right).
$$

For a parallelepiped in $\mathbb R^d$, opposite faces have parallel normals, so $\nu=d$. Hence the same statement holds with $d+1$ replaced by $d$.
\end{example}

\begin{example}[Polygon with many normal directions]\rm
Let $K$ be a polygon in $\mathbb R^2$ with $N$ edges and no two edge normals parallel. Then $\nu=N$. 
If $N>m$, then $\left\lfloor\frac{m}{N}\right\rfloor=0$. 
Thus, for a polygon with enough edge directions, the tangential layers alone span $\mathbb S^{2,m}$.
\end{example}

\section{Finite Elements for Symmetric Tensors}\label{sec:femsymtensor}

In this section, we construct local finite element spaces for symmetric tensor fields. We first introduce a scalar single-face finite element and its normal Taylor decomposition. We then combine this scalar decomposition with the $t$-$n$ decomposition of symmetric tensors to describe normal trace spaces and bubble spaces. Finally, we assemble these single-face spaces on a polytopal element. 

\subsection{Single-face finite element}

Let $T\subset\mathbb R^d$ be a single-face cell with distinguished face $F$ and vertex $c$ opposite to $F$. Let $\lambda_c$ be the affine function satisfying
$$
\lambda_c(c)=1,
\qquad
\lambda_c|_F=0.
$$
Then $\nabla\lambda_c\perp F$ and
$$
\nabla\lambda_c=(\partial_n\lambda_c)\boldsymbol n_F,
\qquad
\partial_n\lambda_c=\frac{1}{\operatorname{dist}(c,F)}.
$$

Although $T=\operatorname{conv}(c,F)$ need not be a simplex, we may use simplex
coordinates as auxiliary affine coordinates. Choose a nondegenerate
$(d-1)$-simplex $S_F\subset F$ and let
$\widehat T_F:=\operatorname{conv}(c,S_F).$
Let $\lambda_c,\lambda_1,\ldots,\lambda_d$ be the barycentric coordinates of
$\widehat T_F$. These functions are affine on the whole space $\mathbb R^d$.
Therefore their restrictions to $T$ may be used to represent polynomials on
$T$. In particular,
$$
\mathbb P_k(T)
=
\operatorname{span}\{\lambda^\alpha:\alpha\in\mathbb T^d_k\},
$$
where
$$
\lambda^\alpha
=
\lambda_c^{\alpha_0}\lambda_F^{\alpha_F},
\qquad
\lambda_F^{\alpha_F}
=
\lambda_1^{\alpha_1}\cdots\lambda_d^{\alpha_d}.
$$
We identify a polynomial on $F$ with its extension to $T$ obtained by using the
same monomials $\lambda_F^{\alpha_F}$, now viewed as affine functions on $T$.
With this convention, for $k\ge0$ and $i\ge0$, the polynomial space has the
normal-layer decomposition
\begin{equation}\label{eq:Pkgeodecomp1}
\mathbb P_k(T)
=
\left(
\Oplus_{j=0}^{i-1}\lambda_c^j\mathbb P_{k-j}(F)
\right)
\Oplus
\lambda_c^i\mathbb P_{k-i}(T),
\end{equation}
with the convention $\mathbb P_s=\{0\}$ if $s<0$. 

\begin{lemma}\label{lm:Crelement}
Let $T$ be a single-face cell with distinguished face $F$. For integers $k\ge0$ and $m\geq 1$, the space $\mathbb P_k(T)$ is uniquely determined by the following degrees of freedom:
\begin{subequations}\label{eq:PkDoFs}
\begin{align}
\label{eq:PkDoFs1}
\int_F \partial_n^i v\,q\,\mathrm dS,
&\qquad
q\in\mathbb P_{k-i}(F),
\quad i=0,1,\ldots,m-1,\\
\label{eq:PkDoFs2}
\int_T v\,q\,\mathrm dx,
&\qquad
q\in\mathbb P_{k-m}(T).
\end{align}
\end{subequations}
\end{lemma}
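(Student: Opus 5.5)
The plan is the standard unisolvence argument: count dimensions, then show that a polynomial annihilated by all degrees of freedom vanishes.

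\emph{Dimension count.} By the normal-layer decomposition \eqref{eq:Pkgeodecomp1} with $i=m$,
$$
\dim\mathbb P_k(T)=\sum_{j=0}^{m-1}\dim\mathbb P_{k-j}(F)+\dim\mathbb P_{k-m}(T),
$$
with the convention $\mathbb P_s=\{0\}$ for $s<0$. This is exactly the number of functionals in \eqref{eq:PkDoFs1}--\eqref{eq:PkDoFs2}. The formula holds for all $k\ge0$, including $k<m$, where both the trailing term and some face terms are zero.

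\emph{Vanishing argument.} Take $v\in\mathbb P_k(T)$ with all DoFs zero and write, by \eqref{eq:Pkgeodecomp1},
$$
v=\sum_{j=0}^{m-1}\lambda_c^j p_j+\lambda_c^m w,\qquad p_j\in\mathbb P_{k-j}(F),\ w\in\mathbb P_{k-m}(T).
$$
Here each $p_j$ is extended to $T$ using only the $\lambda_1,\dots,\lambda_d$ variables. Proceed by induction on $i=0,1,\ldots,m-1$, assuming $p_0=\cdots=p_{i-1}=0$. Compute $\partial_n^i v|_F$ using $\lambda_c|_F=0$ and $\nabla\lambda_c=(\partial_n\lambda_c)\boldsymbol n_F$ with $\partial_n\lambda_c$ constant. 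By the Leibniz rule, every term $\lambda_c^j p_j$ with $j>i$ vanishes on $F$ after $i$ normal derivatives, and so does $\lambda_c^m w$. The only surviving term is $i!\,(\partial_n\lambda_c)^i\,p_i|_F$.

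The delicate point is that $p_i$ extended to $T$ is not constant in the normal direction: its extension uses the $\lambda_F$ monomials, and $\nabla\lambda_\ell$ for $\ell\geq 1$ need not be tangential. So the derivatives of $p_j$ for $j<i$ would in general contribute. These terms, however, are already zero by the induction hypothesis, and derivatives falling on $p_i$ itself come with a positive power of $\lambda_c$, which vanishes on $F$. Hence
$$
\partial_n^i v|_F=i!\,(\partial_n\lambda_c)^i\,p_i|_F\in\mathbb P_{k-i}(F).
$$
Choosing $q=p_i|_F$ in \eqref{eq:PkDoFs1} gives $p_i|_F=0$. Since the extension is determined by the coefficients in the $\lambda_F^{\alpha_F}$ basis, which are those of $p_i|_F$, we conclude $p_i=0$. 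This closes the induction.

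\emph{Interior term.} After the induction, $v=\lambda_c^m w$. Testing \eqref{eq:PkDoFs2} with $q=w$ gives $\int_T\lambda_c^m w^2\,\mathrm dx=0$. Since $\lambda_c>0$ in the interior of $T=\operatorname{conv}(c,F)$, it follows that $w=0$, so $v=0$.

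The main obstacle I expect is the careful bookkeeping in the Leibniz expansion: verifying that the non-tangential gradients of the auxiliary $\lambda_\ell$ do not spoil the triangular structure. The resolution is the order of the induction, which eliminates the lower layers before each normal-derivative DoF is tested.
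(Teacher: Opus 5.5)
Your proposal is correct and follows essentially the same route as the paper: the normal-layer decomposition \eqref{eq:Pkgeodecomp1} with $i=m$, successive elimination of $p_0,\ldots,p_{m-1}$ by the face moments, then the weighted interior moment $\int_T\lambda_c^m w^2\,\mathrm dx=0$, together with the dimension count. Your explicit observation that the cross terms $\partial_n^{i-j}p_j|_F$ with $j<i$ are nonzero in general, and are removed only because the induction has already forced $p_j=0$, is a more careful statement of what the paper compresses into the word ``successively.''
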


\begin{proof}
By \eqref{eq:Pkgeodecomp1} with $i=m$, every $p\in\mathbb P_k(T)$ can be written as
$$
p
=
\sum_{i=0}^{m-1}\lambda_c^i p_i
+
\lambda_c^m p_m,
$$
where $p_i\in\mathbb P_{k-i}(F)$ for $i=0,\ldots,m-1$, and $p_m\in\mathbb P_{k-m}(T)$. Since $\lambda_c|_F=0$ and $\partial_n\lambda_c\ne0$ on $F$, the vanishing of the face DoFs \eqref{eq:PkDoFs1} implies successively that
$$
p_0=p_1=\cdots=p_{m-1}=0.
$$
Hence $p=\lambda_c^m p_m$. Taking $q=p_m$ in \eqref{eq:PkDoFs2} gives
$$
0
=
\int_T \lambda_c^m p_m^2\,\mathrm dx.
$$
Since $\lambda_c\ge0$ on $T$ and is positive in the interior of $T$, we obtain $p_m=0$. Thus $p=0$. Since the number of DoFs equals $\dim\mathbb P_k(T)$, the DoFs are unisolvent.
\end{proof}

Introduce the single-face trace operator
$$
\operatorname{tr}_F^{H^m}v
=
(v,\partial_n v,\ldots,\partial_n^{m-1}v)|_F.
$$
Then \eqref{eq:Pkgeodecomp1} gives
$$
\mathbb B_k^{H^m}(T)
:=
\ker(\operatorname{tr}_F^{H^m})\cap\mathbb P_k(T)
=
\lambda_c^m\mathbb P_{k-m}(T).
$$
When $k<m$, $\mathbb B_k^{H^m}(T)=\{0\}$.

\subsection{Polynomial symmetric tensors and normal traces}

Combining the polynomial decomposition \eqref{eq:Pkgeodecomp1} with the $t$-$n$ decomposition of $\mathbb S^{d,m}$, we obtain
\begin{equation}\label{eq:PkSdecomp}
\mathbb P_k(T;\mathbb S^{d,m})
=
\Oplus_{i=0}^{k}
\Oplus_{\ell=0}^{m}
\lambda_c^i\mathbb P_{k-i}(F;\mathbb S^{(\ell)}_{F,m}).
\end{equation}
This decomposition is indexed by pairs $(i,\ell)$, where $i$ is the polynomial layer in the normal direction and $\ell$ is the number of normal tensor factors; see Fig.~\ref{fig:decompositionmatrix}.

Define the normal trace component
$$
\gamma_s(\tau)
:=
((\operatorname{div}^s\tau)\mathbin{\lrcorner}\boldsymbol n_F)|_F, \qquad 0\le s\le m-1.
$$
The full normal trace is
$$
\operatorname{tr}_F^{\operatorname{div}^m}\tau
:=
(\gamma_0(\tau),\gamma_1(\tau),\ldots,\gamma_{m-1}(\tau)).
$$
For $\tau=a(x)\eta$ with a scalar function $a$ and a constant tensor $\eta\in\mathbb R^{d,m}$, we have
$$
(\operatorname{div}^s\tau)_{i_1\cdots i_{m-s}}
=
\partial_{j_1}\cdots\partial_{j_s}
\big(a\,\eta_{i_1\cdots i_{m-s}j_1\cdots j_s}\big)
=
(\partial_{j_1}\cdots\partial_{j_s}a)
\eta_{i_1\cdots i_{m-s}j_1\cdots j_s}.
$$
Therefore,
$$
\operatorname{div}^s(a\eta)
=
\eta\mathbin{\lrcorner}\nabla^s a.
$$

\begin{lemma}\label{lem:normaltraceblock}
Let $q_F\in\mathbb P_{k-i}(F)$ and $\eta_\ell\in\mathbb S^{(\ell)}_{F,m}$. Then, for $i=0,1,\ldots,m-1$,
\begin{equation}\label{eq:firstnormaltrace}
\gamma_i(\lambda_c^i q_F\eta_\ell)
=
i!(\partial_n\lambda_c)^i q_F
(\eta_\ell\mathbin{\lrcorner}\boldsymbol n_F^{i+1}).
\end{equation}
Moreover, $\gamma_s(\lambda_c^i q_F\eta_\ell)=0$ for $0\le s<i$. Consequently, if $i\ge\ell$, then
$$
\operatorname{tr}_F^{\operatorname{div}^m}(\lambda_c^i q_F\eta_\ell)=0.
$$
If $i<\ell$, then $\gamma_i$ is injective on
$$
\lambda_c^i\mathbb P_{k-i}(F;\mathbb S^{(\ell)}_{F,m}).
$$
In addition, with the convention $\mathbb P_{k-i}(F)=\{0\}$ if $k-i<0$,
\begin{equation}\label{eq:gammai-image}
\gamma_i\left(
\Oplus_{\ell=i+1}^{m}
\lambda_c^i\mathbb P_{k-i}(F;\mathbb S^{(\ell)}_{F,m})
\right)
=
\gamma_i\big(\mathbb P_k(T;\mathbb S^{d,m})\big)
=
\mathbb P_{k-i}(F;\mathbb S^{d,m-i-1}).
\end{equation}
\end{lemma}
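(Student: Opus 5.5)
The plan is to compute $\gamma_s(\lambda_c^i q_F\eta_\ell)$ directly from the identity $\operatorname{div}^s(a\eta)=\eta\mathbin{\lrcorner}\nabla^s a$ with $a=\lambda_c^i q_F$, and then restrict to $F$. First expand $\nabla^s(\lambda_c^i q_F)$ by the Leibniz rule. Since $\nabla\lambda_c=(\partial_n\lambda_c)\boldsymbol n_F$ is constant, each term has the form
$$
\binom{s}{j}\frac{i!}{(i-j)!}(\partial_n\lambda_c)^j\lambda_c^{i-j}\,\sym\big(\boldsymbol n_F^{j}\otimes\nabla^{s-j}q_F\big),\qquad 0\le j\le\min(i,s).
$$
On $F$ we have $\lambda_c=0$, so only the terms with $j=i$ survive. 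If $s<i$, no such term exists, and hence $\gamma_s(\lambda_c^iq_F\eta_\ell)=0$.

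For $s=i$, the unique surviving term is $i!(\partial_n\lambda_c)^i q_F\,\boldsymbol n_F^{i}$. Contracting this with $\eta_\ell$ and then with $\boldsymbol n_F$ gives \eqref{eq:firstnormaltrace}; this uses the symmetry of $\eta_\ell$, so that the order of the contractions is irrelevant.

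For the vanishing statement when $i\ge\ell$:
\begin{itemize}
\item The traces $\gamma_s$ with $s<i$ vanish by the previous step.
\item For $s\ge i$, every surviving term contains the contraction $\eta_\ell\mathbin{\lrcorner}\big(\boldsymbol n_F^{i+1}\otimes\cdots\big)$ (the $i$ normals from the Leibniz term plus the trace normal) against a tensor built from $i+1$ copies of $\boldsymbol n_F$ and tangential derivatives of $q_F$. The tangential derivatives are used here because $q_F$ is extended constantly in the normal direction in the $\lambda$-coordinates.
\item Since $\eta_\ell$ has exactly $\ell\le i<i+1$ normal factors, contracting it with $i+1$ normal vectors gives zero by the orthogonality in Lemma~\ref{lem:tn-decomp}.
\end{itemize}

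The main subtlety is this last point. $\nabla q_F$ is not purely tangential, since $q_F$ is written in the affine coordinates $\lambda_1,\dots,\lambda_d$, whose gradients have normal components. To handle this, I would argue at the level of total normal order instead. On $F$, $\gamma_s$ is the restriction of $\eta_\ell\mathbin{\lrcorner}(\boldsymbol n_F\otimes\nabla^s a)$ (symmetrized), and $\eta_\ell\mathbin{\lrcorner}\boldsymbol n_F^{p}$ vanishes whenever $p>\ell$. The question is therefore whether the surviving term carries at least $i+1$ normals. It does: the $i$ normals from $\lambda_c^i$ plus the trace normal. The remaining factors, from $\nabla^{s-i}q_F$, can only add more normal components, never remove them, so each such contraction is still zero.

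For injectivity when $i<\ell$, take $\eta=\sum_{\alpha}c_\alpha(x)\sym(\boldsymbol n_F^\ell\otimes\boldsymbol t^{\alpha})$. Then $\eta\mathbin{\lrcorner}\boldsymbol n_F^{i+1}$ equals a nonzero multiple $c_{\ell,i}\,\sym(\boldsymbol n_F^{\ell-i-1}\otimes\boldsymbol t^\alpha)$. This map is an isomorphism from $\mathbb S^{(\ell)}_{F,m}$ onto $\mathbb S^{(\ell-i-1)}_{F,m-i-1}$, since it is a bijection of bases indexed by $\mathbb T^{d-2}_{m-\ell}$ in both spaces. Combined with $q_F\mapsto q_F|_F$ being injective on $\mathbb P_{k-i}(F)$ and $(\partial_n\lambda_c)^i\ne0$, this gives the injectivity of $\gamma_i$ on $\lambda_c^i\mathbb P_{k-i}(F;\mathbb S^{(\ell)}_{F,m})$.

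For the image formula \eqref{eq:gammai-image}:
\begin{itemize}
\item Summing the isomorphic images over $\ell=i+1,\dots,m$ gives $\bigoplus_{\ell'=0}^{m-i-1}\mathbb P_{k-i}(F;\mathbb S^{(\ell')}_{F,m-i-1})=\mathbb P_{k-i}(F;\mathbb S^{d,m-i-1})$ by Lemma~\ref{lem:tn-decomp}. So the first space maps onto the right-hand side.
\item Conversely, by \eqref{eq:PkSdecomp}, $\gamma_i$ of a general element of $\mathbb P_k(T;\mathbb S^{d,m})$ only receives contributions from blocks $(j,\ell)$ with $j\le i$.
\item Each such image lies in $\mathbb P_{k-i}(F;\mathbb S^{d,m-i-1})$, since it is a polynomial of degree at most $k-i$ on $F$; blocks with $j<i$ lose $i-j$ degrees through derivatives.
\item This yields the reverse inclusion, and hence equality, with the convention for $k<i$ handled trivially.
\end{itemize}
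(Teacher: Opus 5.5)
Your proposal is correct and follows essentially the same route as the paper: a Leibniz expansion of $\nabla^s(\lambda_c^i q_F)$, the observation that on $F$ only the term with all $i$ derivatives on $\lambda_c^i$ survives, factorization of every higher trace through $\eta_\ell\mathbin{\lrcorner}\boldsymbol n_F^{i+1}$, and the basis bijection $\mathbb S^{(\ell)}_{F,m}\to\mathbb S^{(\ell-i-1)}_{F,m-i-1}$ for both injectivity and the image identity. Drop the bullet claiming $q_F$ is extended constantly in the normal direction; as you note yourself, it is not. Your corrected argument is the right one and needs no tangentiality of $\nabla q_F$: each surviving term is $\eta_\ell$ contracted against $\boldsymbol n_F^{i+1}$ tensored with something, and by the symmetry of $\eta_\ell$ this factors through $\eta_\ell\mathbin{\lrcorner}\boldsymbol n_F^{i+1}=0$.
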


\begin{proof}
Figure \ref{fig:decompositionmatrix} illustrates how the trace operator acts on the $(i,\ell)$ block.
Set $p=\lambda_c^i q_F$. Since $\eta_\ell$ is constant,
$$
\operatorname{div}^s(p\eta_\ell)
=
\eta_\ell\mathbin{\lrcorner}\nabla^s p.
$$
Also $\lambda_c|_F=0$ and
$$
\nabla\lambda_c=(\partial_n\lambda_c)\boldsymbol n_F
\qquad \text{on }F.
$$
If $s<i$, every term in $\nabla^s(\lambda_c^i q_F)$ contains a positive power of $\lambda_c$ and hence vanishes on $F$. Thus
$$
\gamma_s(\lambda_c^i q_F\eta_\ell)=0,
\qquad 0\le s<i.
$$

For $s=i$, the only term that survives on $F$ is the term in which all $i$ derivatives hit $\lambda_c^i$. Hence
$$
\nabla^i(\lambda_c^i q_F)|_F
=
i!(\partial_n\lambda_c)^i q_F\boldsymbol n_F^i.
$$
Using
$$
\operatorname{div}^i(\lambda_c^i q_F\eta_\ell)
=
\eta_\ell\mathbin{\lrcorner}\nabla^i(\lambda_c^i q_F),
$$
we get
$$
(\operatorname{div}^i(\lambda_c^i q_F\eta_\ell))|_F
=
i!(\partial_n\lambda_c)^i q_F
(\eta_\ell\mathbin{\lrcorner}\boldsymbol n_F^i).
$$
The trace $\gamma_i$ adds one more contraction with $\boldsymbol n_F$, which proves \eqref{eq:firstnormaltrace}.

If $s>i$, every surviving term still contains the factor produced by differentiating $\lambda_c^i$ in the normal direction. After the additional normal contraction in $\gamma_s$, each such term contains the factor
$$
\eta_\ell\mathbin{\lrcorner}\boldsymbol n_F^{i+1}.
$$
If $i\ge\ell$, this contraction is zero because $\eta_\ell$ contains only $\ell$ normal factors. Therefore all trace components vanish.

If $i<\ell$, then contraction with $\boldsymbol n_F^{i+1}$ maps $\mathbb S^{(\ell)}_{F,m}$ injectively into $\mathbb S^{(\ell-i-1)}_{F,m-i-1}$. Indeed, for a basis tensor
$$
\eta_\ell
=
\sym(\boldsymbol n_F^\ell\otimes\boldsymbol t^{\alpha_F}),
$$
one has
$$
\eta_\ell\mathbin{\lrcorner}\boldsymbol n_F^{i+1}
=
c_{\ell,i,m}\,
\sym(\boldsymbol n_F^{\ell-i-1}\otimes\boldsymbol t^{\alpha_F}),
$$
where $c_{\ell,i,m}\ne0$. Hence \eqref{eq:firstnormaltrace} implies the injectivity of $\gamma_i$ on the stated block.

It remains to prove \eqref{eq:gammai-image}. From \eqref{eq:firstnormaltrace},
$$
\gamma_i\left(
\Oplus_{\ell=i+1}^{m}
\lambda_c^i\mathbb P_{k-i}(F;\mathbb S^{(\ell)}_{F,m})
\right)
=
\mathbb P_{k-i}(F)\otimes
\left(
\Oplus_{\ell=i+1}^{m}
(\mathbb S^{(\ell)}_{F,m}\mathbin{\lrcorner}\boldsymbol n_F^{i+1})
\right).
$$
The contraction by $\boldsymbol n_F^{i+1}$ maps $\mathbb S^{(\ell)}_{F,m}$ onto
$\mathbb S^{(\ell-i-1)}_{F,m-i-1}$ for $\ell=i+1,\ldots,m$. Therefore
$$
\Oplus_{\ell=i+1}^{m}
(\mathbb S^{(\ell)}_{F,m}\mathbin{\lrcorner}\boldsymbol n_F^{i+1})
=
\Oplus_{r=0}^{m-i-1}\mathbb S^{(r)}_{F,m-i-1}
=
\mathbb S^{d,m-i-1}.
$$
Thus the left-hand side of \eqref{eq:gammai-image} equals
$\mathbb P_{k-i}(F;\mathbb S^{d,m-i-1})$.

Conversely, for any $\tau\in\mathbb P_k(T;\mathbb S^{d,m})$, the trace $\gamma_i(\tau)$ contains $i$ derivatives and one normal contraction. Hence
$$
\gamma_i(\tau)\in\mathbb P_{k-i}(F;\mathbb S^{d,m-i-1}).
$$
This gives the reverse inclusion and proves \eqref{eq:gammai-image}.
\end{proof}

Define
$$
\mathbb B_k^{\operatorname{div}^m}(T;\mathbb S^{d,m})
:=
\ker(\operatorname{tr}_F^{\operatorname{div}^m})
\cap
\mathbb P_k(T;\mathbb S^{d,m}).
$$
Lemma~\ref{lem:normaltraceblock} gives the following characterization of the polynomial trace and bubble spaces.

\begin{proposition}\label{prop:bubbledecomp}
For $k\ge0$, the polynomial bubble space is
\begin{equation}\label{eq:BkSdecomp}
\mathbb B_k^{\operatorname{div}^m}(T;\mathbb S^{d,m})
=
\Oplus_{\ell=0}^{m}
\lambda_c^\ell\mathbb P_{k-\ell}(T;\mathbb S^{(\ell)}_{F,m}).
\end{equation}
Moreover, the full trace map satisfies
\begin{equation}\label{eq:traceimagepoly}
\operatorname{tr}_F^{\operatorname{div}^m}
\big(\mathbb P_k(T;\mathbb S^{d,m})\big)
=
\prod_{i=0}^{m-1}
\mathbb P_{k-i}(F;\mathbb S^{d,m-i-1}).
\end{equation}
\end{proposition}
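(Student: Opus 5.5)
The plan is to work entirely inside the block decomposition \eqref{eq:PkSdecomp}, indexed by pairs $(i,\ell)$, and to use Lemma~\ref{lem:normaltraceblock} to read off which blocks the trace map kills and which it detects. Following that lemma, I call a block $(i,\ell)$ \emph{upper} when $i\ge\ell$ and \emph{lower} when $i<\ell$. The first step is to describe the upper blocks. Fix $\ell$ and group the upper blocks $i=\ell,\ldots,k$. By the second identity in \eqref{eq:Pkgeodecomp1}, exactly as in the proof of Lemma~\ref{lem:poly-layer-decomp}, this group equals $\lambda_c^\ell\mathbb P_{k-\ell}(T;\mathbb S^{(\ell)}_{F,m})$. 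Lemma~\ref{lem:normaltraceblock} says every upper block lies in $\ker\operatorname{tr}_F^{\operatorname{div}^m}$, so the right-hand side of \eqref{eq:BkSdecomp} is contained in the bubble space.

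For the reverse inclusion, I take $\tau$ in the bubble space and write $\tau=\tau_U+\tau_L$, where $\tau_U$ is the upper part and $\tau_L=\sum_{i<\ell}\tau_{i,\ell}$ is the lower part. Then $\operatorname{tr}_F^{\operatorname{div}^m}\tau_L=0$, and it remains to show $\tau_L=0$. I do this by induction on $i=0,1,\ldots,\min(k,m-1)$, which is a triangular elimination.
\begin{itemize}
\item Assume the blocks with $i'<i$ have already been shown to vanish.
\item Consider $\gamma_i(\tau_L)$. Blocks with $i'>i$ contribute nothing, because $\gamma_s$ vanishes on layer $i'$ whenever $s<i'$.
\item Blocks with $i'<i$ vanish by the induction hypothesis.
\item Hence $0=\gamma_i(\tau_L)=\gamma_i\bigl(\sum_{\ell>i}\tau_{i,\ell}\bigr)$.
\end{itemize}
By \eqref{eq:firstnormaltrace}, the $\gamma_i$ images of the different $\ell$ blocks land in the different normal layers $\mathbb S^{(\ell-i-1)}_{F,m-i-1}$. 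These layers are orthogonal, so they form a direct sum, and each block map is injective by the lemma. Therefore $\gamma_i$ is injective on $\bigoplus_{\ell>i}\lambda_c^i\mathbb P_{k-i}(F;\mathbb S^{(\ell)}_{F,m})$, and every $\tau_{i,\ell}$ with $i<\ell$ vanishes. Note that the lemma states injectivity only block by block, so this direct-sum-of-images observation is the point to make explicit. The lower blocks with $i\ge m$ are empty because $\ell\le m$, so they need no treatment.

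For \eqref{eq:traceimagepoly}, the inclusion "$\subseteq$" follows from the degree count already given in the lemma. For "$\supseteq$", I prescribe targets $g_i\in\mathbb P_{k-i}(F;\mathbb S^{d,m-i-1})$ and build a preimage in the lower blocks, again triangularly in $i$. Choose $\tau_0$ in the $i=0$ lower blocks with $\gamma_0(\tau_0)=g_0$; this is possible by \eqref{eq:gammai-image}. Then choose $\tau_i$ in the $i$-th lower blocks with
\[
\gamma_i(\tau_i)=g_i-\gamma_i\Bigl(\sum_{i'<i}\tau_{i'}\Bigr),
\]
again by \eqref{eq:gammai-image}. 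Adding $\tau_i$ does not change $\gamma_s$ for $s<i$, because layer-$i$ terms vanish under $\gamma_s$ when $s<i$. Hence $\tau=\sum_i\tau_i$ has $\gamma_s(\tau)=g_s$ for every $s$, which gives surjectivity.

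The main obstacle is the unitriangular structure: $\gamma_s$ is nonzero on lower blocks with $i<s$, so the trace map is block lower-triangular rather than block-diagonal. Both arguments handle this by ordering the elimination and the construction by $i$. The degenerate cases, where $k-i<0$ makes a block empty, are consistent with the convention $\mathbb P_{k-i}=\{0\}$ on both sides.
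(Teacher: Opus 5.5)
Your proposal is correct and takes the same route as the paper, which obtains the proposition directly from the $(i,\ell)$ block decomposition \eqref{eq:PkSdecomp} and Lemma~\ref{lem:normaltraceblock}: blocks with $i\ge\ell$ are annihilated by the trace, and blocks with $i<\ell$ are detected by it. Your triangular elimination in $i$ only makes explicit what the paper leaves to the lemma and Figure~\ref{fig:decompositionmatrix}, namely that the $\gamma_i$-images of different $\ell$-blocks lie in orthogonal layers $\mathbb S^{(\ell-i-1)}_{F,m-i-1}$, which gives joint injectivity and surjectivity (note that your ``upper/lower'' labels are the reverse of the paper's).
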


Figure~\ref{fig:decompositionmatrix} shows the $(i,\ell)$-indexed block layout. The lower-triangular cells $i\ge\ell$ form the bubble space, while the upper-triangular cells $i<\ell$ are detected by the normal traces.

\begin{figure}[htp]
\centering
\subfigure[No enrichment for $k\geq m,$ and $\lfloor m/\nu_K\rfloor = 0$]{%
\begin{minipage}[t]{0.33\linewidth}
\centering
\includegraphics[width=3.75cm]{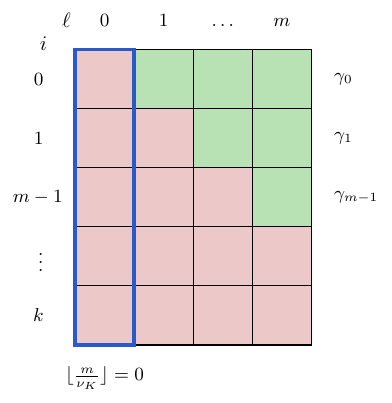}
\end{minipage}%
}%
\subfigure[Trace enrichment]{%
\begin{minipage}[t]{0.33\linewidth}
\centering
\includegraphics[width=3cm]{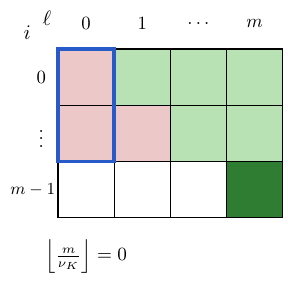}
\end{minipage}%
}%
\subfigure[Trace and bubble enrichment]{%
\begin{minipage}[t]{0.33\linewidth}
\centering
\includegraphics[width=3.75cm]{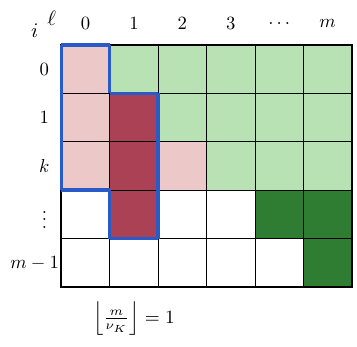}
\end{minipage}%
}
\caption{Block layout of $\mathbb P_k(T)\otimes\mathbb S^{d,m}$ indexed by $(i,\ell)$. Dark green cells are trace-enrichment components, dark red cells are bubble-enrichment components, and the blue-outlined column contains the essential bubble components used in the local stability construction.}
\label{fig:decompositionmatrix}
\end{figure}

\subsection{Single-face enriched tensor space}
The natural local polynomial tensor space is $\mathbb P_k(T;\mathbb S^{d,m})$. To obtain stability for all polynomial degrees, we enrich this space by trace and bubble components. The trace enrichment is needed only when $k<m-1$, while the bubble enrichment depends on the geometry of $K$.

\subsubsection{Trace cells}

When $k\ge m-1$, the polynomial tensor space already contains all trace layers. When $k<m-1$, the trace components $\gamma_i$, $k+1\le i\le m-1$, are missing because $\operatorname{div}^i\sigma=0$ for $\sigma\in\mathbb P_k(T;\mathbb S^{d,m})$; see the white cells in Fig.~\ref{fig:decompositionmatrix}(b).

We will add more trace cells row by row and define the full trace-cell space by
\begin{equation}\label{eq:full-trace-space}
\operatorname{Tr}(\Sigma_k;T)
:=
\Oplus_{i=0}^{m-1}
\lambda_c^i
\mathbb P_{(k-i)^+}(F)\otimes
\sym\big(\boldsymbol n_F^{i+1}\otimes\mathbb S^{d,m-i-1}\big),
\end{equation}
where $(k-i)^+ = \max\{k-i,0\}$.


\subsubsection{Bubble cells}

To prove the inf-sup condition, the local tensor space must contain enough bubble tensors to recover all tensors in $\mathbb P_k(K;\mathbb S^{d,m})$. This requirement depends on the geometry of $K$ through the number $\nu_K$ of pairwise non-parallel face normals.

For fixed $K$, write $\nu:=\nu_K$. If $\lfloor m/\nu\rfloor>0$, we add extra bubble cells so that all tensor layers up to order $\lfloor m/\nu\rfloor$ carry degree $k$. Define
\begin{equation}\label{eq:full-bubble-space}
\begin{aligned}
\mathbb B(\Sigma_k;T)
:={}&
\underbrace{
\Oplus_{\ell=0}^{\lfloor m/\nu\rfloor}
\lambda_c^{\ell}\mathbb P_k(T;\mathbb S^{(\ell)}_{F,m})
}_{\mathbb B_{\mathrm{ess}}(T)}
\Oplus
\underbrace{
\Oplus_{\ell=\lfloor m/\nu\rfloor+1}^{m}
\lambda_c^{\ell}\mathbb P_{k-\ell}(T;\mathbb S^{(\ell)}_{F,m})
}_{\mathbb B_{\mathrm{rem}}(T)} .
\end{aligned}
\end{equation}

\subsubsection{Stress space and unisolvence}

We define the local single-face stress space by the trace--bubble decomposition
\begin{equation}\label{eq:decomposition-tr-bubble}
\Sigma_k(T)
:=
\operatorname{Tr}(\Sigma_k;T)
\Oplus
\mathbb B(\Sigma_k;T).
\end{equation}

The decomposition \eqref{eq:decomposition-tr-bubble} gives the following degrees of freedom.

\begin{lemma}\label{lem:sfTtensor}
Let $m\ge1$ and $k\ge0$. The following DoFs are unisolvent for $\Sigma_k(T)$:
\begin{subequations}\label{sfTtensordof}
\begin{align}
\label{sfTtensordof1}
\int_F \gamma_i(\sigma):\tau\,\mathrm dS,
&\qquad
\tau\in
\mathbb P_{(k-i)^+}(F;\mathbb S^{d,m-i-1}),
\quad i=0,\ldots,m-1,\\
\label{sfTtensordof2}
\int_T \sigma:\tau\,\mathrm dx,
&\qquad
\tau\in \mathbb B(\Sigma_k;T).
\end{align}
\end{subequations}
\end{lemma}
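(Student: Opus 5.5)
The argument is a dimension count followed by an injectivity argument that uses the trace--bubble structure \eqref{eq:decomposition-tr-bubble} together with Lemma~\ref{lem:normaltraceblock}.

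\emph{Dimension count.} The sum in \eqref{eq:decomposition-tr-bubble} is direct, so $\dim\Sigma_k(T)=\dim\operatorname{Tr}(\Sigma_k;T)+\dim\mathbb B(\Sigma_k;T)$.

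The interior DoFs \eqref{sfTtensordof2} are exactly $\dim\mathbb B(\Sigma_k;T)$ in number.

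For the $i$-th row of $\operatorname{Tr}(\Sigma_k;T)$, the tensor factor $\sym(\boldsymbol n_F^{i+1}\otimes\mathbb S^{d,m-i-1})$ is isomorphic to $\mathbb S^{d,m-i-1}$. The map $\eta\mapsto\sym(\boldsymbol n_F^{i+1}\otimes\eta)$ is injective, since by Lemma~\ref{lem:tn-decomp} it is a bijection onto $\Oplus_{\ell\ge i+1}\mathbb S^{(\ell)}_{F,m}$. Hence that row has dimension $\dim\mathbb P_{(k-i)^+}(F)\cdot\dim\mathbb S^{d,m-i-1}$, which equals the number of face DoFs \eqref{sfTtensordof1} for that $i$. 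The total counts therefore agree.

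\emph{Injectivity.} Let $\sigma=\sigma_{\rm tr}+\sigma_b\in\Sigma_k(T)$ with all DoFs vanishing. First I would show that every component of $\mathbb B(\Sigma_k;T)$ has zero normal trace. A generic element of $\lambda_c^\ell\mathbb P_{k'}(T;\mathbb S^{(\ell)}_{F,m})$ is a combination of $\lambda_c^{j}q_F\eta_\ell$ with $j\ge\ell$. Lemma~\ref{lem:normaltraceblock} applies verbatim to such terms, since its proof only uses $q_F$ being a polynomial on $F$ and not the degree bound. So these terms are killed by $\operatorname{tr}_F^{\operatorname{div}^m}$, and the face DoFs see only $\sigma_{\rm tr}=\sum_{i}\lambda_c^i q_i$ with $q_i\in\mathbb P_{(k-i)^+}(F)\otimes\sym(\boldsymbol n_F^{i+1}\otimes\mathbb S^{d,m-i-1})$.

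By Lemma~\ref{lem:normaltraceblock}, $\gamma_s(\lambda_c^iq_i)=0$ for $s<i$, and $\gamma_i(\lambda_c^iq_i)=i!(\partial_n\lambda_c)^i\,q_i\mathbin{\lrcorner}\boldsymbol n_F^{i+1}$. This formula is derived for an $F$-polynomial coefficient, so it holds even when $(k-i)^+>k-i$ in the enriched rows. The map is injective on the $i$-th row, because contraction by $\boldsymbol n_F^{i+1}$ is injective on $\Oplus_{\ell\ge i+1}\mathbb S^{(\ell)}_{F,m}$ (the nonzero constants $c_{\ell,i,m}$). The resulting trace $\gamma_i(\sigma)$ lies in $\mathbb P_{(k-i)^+}(F;\mathbb S^{d,m-i-1})$, which is exactly the test space in \eqref{sfTtensordof1}.

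The system is block lower-triangular in $i$, so I argue by induction on $i=0,1,\dots,m-1$. Vanishing of $\gamma_0(\sigma)$ forces $q_0=0$. Then $\gamma_1(\sigma)=\gamma_1(\lambda_c q_1)$ forces $q_1=0$, and so on, giving $\sigma_{\rm tr}=0$.

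Finally, $\sigma=\sigma_b\in\mathbb B(\Sigma_k;T)$, and taking $\tau=\sigma_b$ in \eqref{sfTtensordof2} gives $\|\sigma_b\|_{L^2(T)}^2=0$. So $\sigma=0$, and the counting gives unisolvence.

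\emph{Main obstacle.} The delicate step is justifying the zero-trace and triangular structure for the enriched spaces. Two issues arise there. First, the trace rows have degree $(k-i)^+$ rather than $k-i$. Second, the essential bubbles $\lambda_c^\ell\mathbb P_k(T;\mathbb S^{(\ell)}_{F,m})$ exceed $\mathbb P_k$, so Proposition~\ref{prop:bubbledecomp} does not apply directly. I would handle both by re-running the product-rule computation of Lemma~\ref{lem:normaltraceblock} for arbitrary polynomial coefficients $\lambda_c^jq$, expanding $q\in\mathbb P(T)$ via \eqref{eq:Pkgeodecomp1} into $\lambda_c$-layers. Every term then carries at least $\lambda_c^\ell$ against $\ell$ normal factors, and its trace vanishes.
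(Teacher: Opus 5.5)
Your proposal is correct and follows essentially the same route as the paper's proof. You split $\sigma=\sigma_{\operatorname{tr}}+\sigma_{\operatorname b}$, use the triangular trace structure from Lemma~\ref{lem:normaltraceblock} to conclude from the vanishing face moments that $\sigma_{\operatorname{tr}}=0$, test the interior moments with $\sigma_{\operatorname b}$ itself, and finish with the dimension count. You also make explicit several points the paper leaves implicit: the essential bubbles $\lambda_c^\ell\mathbb P_k(T;\mathbb S^{(\ell)}_{F,m})$, which exceed degree $k$, still have zero normal trace; the enriched rows of degree $(k-i)^+$ keep the injective triangular structure; and the number of face and interior DoFs matches $\dim\Sigma_k(T)$.
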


\begin{proof}
Let $\sigma\in\Sigma_k(T)$ and assume that all DoFs in \eqref{sfTtensordof} vanish. Write
$$
\sigma
=
\sigma_{\operatorname{tr}}
+
\sigma_{\operatorname b},
\qquad
\sigma_{\operatorname{tr}}\in\operatorname{Tr}(\Sigma_k;T),
\quad
\sigma_{\operatorname b}\in\mathbb B(\Sigma_k;T).
$$
By the construction of $\operatorname{Tr}(\Sigma_k;T)$ and Lemma~\ref{lem:normaltraceblock}, the face moments \eqref{sfTtensordof1} determine the trace part. Since all face moments vanish, we have $\sigma_{\operatorname{tr}}=0$. Hence $\sigma=\sigma_{\operatorname b}\in\mathbb B(\Sigma_k;T)$. Taking $\tau=\sigma$ in \eqref{sfTtensordof2} gives $\|\sigma\|_{0,T}^2=0.$
Therefore $\sigma=0$. Since the number of DoFs equals $\dim\Sigma_k(T)$, the DoFs are unisolvent.
\end{proof}

\begin{remark}\rm
When $m$ is moderate, one may avoid the minimal enriched tensor space and use the full polynomial tensor space
$$
\mathbb P_{\bar k}(T;\mathbb S^{d,m}),\qquad \bar k= \max\{k+\lfloor m/\nu_K\rfloor, m-1\},\quad T\in\mathcal K_h^{\rm R},
$$
which contains $\operatorname{Tr}(\Sigma_k;T)\Oplus \mathbb B(\Sigma_k;T)$. 
The minimal construction is used in the analysis to identify the needed trace and bubble components, while the full polynomial space gives a simpler implementation.
\end{remark}

\subsection{Finite element for symmetric tensors on a polytope}
We now fix one polytope $K$ and construct the local tensor space on $K$. Let $K^{\rm R}:=\{T_i\}_{i=1}^{n_K}$ be the local refinement associated with the faces $F_i\subset\partial K$. We assume throughout that such a point $c_K$ exists and that each $T_F$ is contained in $K$. Define the broken local stress space by
\begin{equation}\label{eq:broken-local-stress-space}
\Sigma^{-1}_{k}(K^{\rm R})
:=
\prod_{T\in K^{\rm R}}\Sigma_k(T)
=
\operatorname{Tr}(\Sigma_k;K)
\Oplus
\mathbb B_k(\Sigma_k;K),
\end{equation}
where
$$
\operatorname{Tr}(\Sigma_k;K)
:=
\prod_{T_F\in K^{\rm R}}\operatorname{Tr}(\Sigma_k;T_F),
\qquad
\mathbb B_k(\Sigma_k;K)
:=
\prod_{T\in K^{\rm R}}\mathbb B(\Sigma_k;T).
$$
For $\tau\in\mathbb B_k(\Sigma_k;K)$, the exterior traces satisfy
$$
\gamma_i(\tau)|_{\partial K}=0,\qquad i=0,\ldots,m-1.
$$
The trace condition is imposed only on the exterior faces $F_i\subset\partial K$, not on the internal faces of the local refinement.

By Lemma~\ref{lem:sfTtensor} on each refined cell $T$, we obtain the following DoFs on $K$.

\begin{lemma}\label{lem:Ktensor-unisolvence}
Let $K$ be a fixed polytope and let $k\ge0$. The following degrees of freedom are unisolvent for $\Sigma^{-1}_{k}(K^{\rm R})$:
\begin{subequations}\label{Ktensordof}
\begin{align}
\label{Ktensordof1}
\int_F \gamma_i(\sigma):\tau\,\mathrm dS,
&\quad
\tau\in
\mathbb P_{(k-i)^+}(F;\mathbb S^{d,m-i-1}),
\ i=0,\ldots,m-1,\ F\subset\partial K,\\
\label{Ktensordof2}
\int_K \sigma:\tau\,\mathrm dx,
&\quad
\tau\in
\mathbb B_k(\Sigma_k;K).
\end{align}
\end{subequations}
\end{lemma}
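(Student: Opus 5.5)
The plan is to reduce Lemma~\ref{lem:Ktensor-unisolvence} to Lemma~\ref{lem:sfTtensor}, applied cell by cell on the refinement $K^{\rm R}$. The space $\Sigma^{-1}_{k}(K^{\rm R})$ is a product of the single-face spaces $\Sigma_k(T_F)$, one for each $T_F\in K^{\rm R}$. Each refined cell $T_F=\operatorname{conv}(c_K,F)$ is a single-face cell. Its distinguished face is the exterior face $F\subset\partial K$, and its vertex is $c=c_K$. So the exterior faces of $K$ and the cells of $K^{\rm R}$ correspond one to one.

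First, I check that the DoF sets match. The face moments \eqref{Ktensordof1} on a given $F\subset\partial K$ involve only the restriction $\sigma|_{T_F}$, because the trace $\gamma_i$ on $F$ is taken from the cell $T_F$ adjacent to $F$ inside $K$. These are exactly the face DoFs \eqref{sfTtensordof1} for $\Sigma_k(T_F)$.

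For the interior moments \eqref{Ktensordof2}, I decompose $\mathbb B_k(\Sigma_k;K)=\prod_T\mathbb B(\Sigma_k;T)$. A test function supported on a single cell $T$ (extended by zero) gives $\int_K\sigma:\tau\,\mathrm dx=\int_T\sigma|_T:\tau\,\mathrm dx$. So the family \eqref{Ktensordof2} is equivalent to the union over $T$ of the families \eqref{sfTtensordof2}. Thus the DoFs in \eqref{Ktensordof} are precisely the disjoint union of the local DoFs of Lemma~\ref{lem:sfTtensor} over $T\in K^{\rm R}$.

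Unisolvence then follows directly. If all DoFs in \eqref{Ktensordof} vanish, then for each $T$ all local DoFs of $\sigma|_T$ vanish, and Lemma~\ref{lem:sfTtensor} gives $\sigma|_T=0$. Hence $\sigma=0$. For the count, the number of DoFs is $\sum_T\dim\Sigma_k(T)=\dim\Sigma^{-1}_k(K^{\rm R})$, since each local family is unisolvent for its factor.

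The only point needing care, and the main obstacle I expect, is to make sure nothing couples different cells. The interior faces of $K^{\rm R}$ carry no DoFs, since the trace condition is imposed only on exterior faces. Also, the test space in \eqref{Ktensordof2} is the full product space rather than some subspace with interface conditions. Both facts follow from the definitions in \eqref{eq:broken-local-stress-space}, so the block-diagonal structure holds and no further argument is needed.
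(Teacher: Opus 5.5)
Your proposal is correct and follows the paper's argument. The paper proves the lemma in one line: it applies Lemma~\ref{lem:sfTtensor} on each refined cell $T_F\in K^{\rm R}$, using that $\Sigma^{-1}_k(K^{\rm R})$ and $\mathbb B_k(\Sigma_k;K)$ are products over the cells and that each exterior face $F\subset\partial K$ belongs to exactly one cell $T_F$; your write-up just spells out the cell-by-cell decoupling the paper leaves implicit.
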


To prove the discrete inf-sup condition, we use an equivalent set of interior DoFs. Let
$$
N_m:=\dim\mathbb S^{d,m}=\binom{m+d-1}{m}.
$$
By Lemma~\ref{lem:decomposition_sdm2}, we can choose faces $F_\alpha\subset\partial K$, integers $0\le r_\alpha\le \lfloor m/\nu\rfloor$, and tensors
$$
\{\xi_\alpha\in\mathbb S^{(r_\alpha)}_{F_\alpha,m},
\,
\alpha=1,\ldots,N_m,\} \text{ is a basis of } \mathbb S^{d,m}.
$$
Let $T_\alpha=T_{F_\alpha}$ and $\lambda_\alpha$ be the affine function on $T_\alpha$ satisfying $\lambda_\alpha|_{F_\alpha}=0$ and $\lambda_\alpha(c_K)=1$.

Define the essential bubble space on $K$ as
\begin{equation}\label{eq:essential-bubble-space}
\mathbb B_{\rm ess}(\Sigma_k;K)
:=
\Oplus_{\alpha=1}^{N_m}
\left\{
\lambda_\alpha^{r_\alpha} q_\alpha \xi_\alpha\,\chi_{T_\alpha}:
q_\alpha\in\mathbb P_k(T_\alpha)
\right\}
\subset
\mathbb B_k(\Sigma_k;K),
\end{equation}
where $\chi_{T_\alpha}$ is the characteristic function of $T_\alpha$. 
Let $\mathbb B_{\rm rem}(\Sigma_k;K)$ be the $L^2(K)$-orthogonal complement of $\mathbb B_{\rm ess}(\Sigma_k;K)$ in $\mathbb B_k(\Sigma_k;K)$. 
Then
\begin{equation}\label{eq:bubble-essential-rem-split}
\mathbb B_k(\Sigma_k;K)
=
\mathbb B_{\rm rem}(\Sigma_k;K)
\Oplus^{\bot_{L^2(K)}}
\mathbb B_{\rm ess}(\Sigma_k;K).
\end{equation}
We will change the DoF corresponding to $\mathbb B_{\rm rem}(\Sigma_k;K)$ to $\mathbb P_k(K;\mathbb S^{d,m})$.
\begin{lemma}\label{lem:dof-equal}
Let $K\in\mathcal K_h$ and $k\ge0$. The space $\Sigma_k^{-1}(K^{\rm R})$ is uniquely determined by the following degrees of freedom:
\begin{subequations}\label{Ksymtensordof}
\begin{align}
\label{Ksymtensordof1}
\int_F \gamma_i(\sigma):\tau\,\mathrm dS,
&\quad
\tau\in
\mathbb P_{(k-i)^+}(F;\mathbb S^{d,m-i-1}),
\ i=0,\ldots,m-1,\ F\subset\partial K,\\
\label{Ksymtensordof2}
\int_K \sigma:\tau\,\mathrm dx,
&\quad
\tau\in \mathbb B_{\rm rem}(\Sigma_k;K),\\
\label{Ksymtensordof3}
\int_K \sigma:\tau\,\mathrm dx,
&\quad
\tau\in \mathbb P_k(K;\mathbb S^{d,m}).
\end{align}
\end{subequations}
\end{lemma}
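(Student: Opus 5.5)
The plan is a dimension count followed by an injectivity argument. Lemma~\ref{lem:Ktensor-unisolvence} already gives that the DoFs \eqref{Ktensordof} are unisolvent for $\Sigma^{-1}_{k}(K^{\rm R})$. The new set \eqref{Ksymtensordof} keeps the face moments \eqref{Ksymtensordof1}. It then replaces the interior moments against $\mathbb B_k(\Sigma_k;K)=\mathbb B_{\rm rem}(\Sigma_k;K)\oplus\mathbb B_{\rm ess}(\Sigma_k;K)$ by moments against $\mathbb B_{\rm rem}(\Sigma_k;K)$ and $\mathbb P_k(K;\mathbb S^{d,m})$. So it suffices to prove two things: (i) $\dim\mathbb B_{\rm ess}(\Sigma_k;K)=\dim\mathbb P_k(K;\mathbb S^{d,m})=N_m\dim\mathbb P_k$, and (ii) if all DoFs in \eqref{Ksymtensordof} vanish, then $\sigma=0$.

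For (i), I will show that the sum in \eqref{eq:essential-bubble-space} is direct and that each summand has dimension $\dim\mathbb P_k(T_\alpha)=\dim\mathbb P_k$. The summand is the image of $q_\alpha\mapsto\lambda_\alpha^{r_\alpha}q_\alpha\xi_\alpha\chi_{T_\alpha}$, which is injective because $\lambda_\alpha^{r_\alpha}$ is a nonzero polynomial. For directness, group the indices $\alpha$ by the refined cell $T_\alpha$. Summands supported on different cells are independent. On a common cell, the constant tensors $\xi_\alpha$ are linearly independent, being part of a basis of $\mathbb S^{d,m}$. Hence $\sum_\alpha \lambda_\alpha^{r_\alpha}q_\alpha\xi_\alpha=0$ forces each scalar coefficient $\lambda_\alpha^{r_\alpha}q_\alpha$ to vanish. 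It follows that the number of DoFs in \eqref{Ksymtensordof} equals the number in \eqref{Ktensordof}, which is $\dim\Sigma^{-1}_k(K^{\rm R})$.

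For (ii), let $\sigma\in\Sigma^{-1}_k(K^{\rm R})$ have all DoFs \eqref{Ksymtensordof} equal to zero.
\begin{enumerate}[label=(\alph*)]
\item By the face part of Lemma~\ref{lem:sfTtensor} applied on each $T_F$, the vanishing moments \eqref{Ksymtensordof1} kill the trace component. Hence $\sigma\in\mathbb B_k(\Sigma_k;K)$.
\item Split $\sigma=\sigma_{\rm rem}+\sigma_{\rm ess}$ according to \eqref{eq:bubble-essential-rem-split}. Testing \eqref{Ksymtensordof2} with $\tau=\sigma_{\rm rem}$ and using $L^2(K)$-orthogonality gives $\|\sigma_{\rm rem}\|_{0,K}^2=0$.
\item Now $\sigma=\sum_\alpha\lambda_\alpha^{r_\alpha}q_\alpha\xi_\alpha\chi_{T_\alpha}$. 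Let $\{\xi^*_\beta\}$ be the dual basis of $\mathbb S^{d,m}$, so $\xi_\alpha:\xi^*_\beta=\delta_{\alpha\beta}$. Test \eqref{Ksymtensordof3} with $\tau=p\,\xi^*_\beta$ for $p\in\mathbb P_k(K)$. This gives $\int_{T_\beta}\lambda_\beta^{r_\beta}q_\beta\,p\,\mathrm dx=0$ for all $p\in\mathbb P_k(K)$.
\item Every $q_\beta\in\mathbb P_k(T_\beta)$ is the restriction of a global polynomial of degree $k$. Choosing that extension as $p$ yields $\int_{T_\beta}\lambda_\beta^{r_\beta}q_\beta^2=0$. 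Since $\lambda_\beta>0$ in the interior of $T_\beta$, we get $q_\beta=0$ for every $\beta$, and therefore $\sigma=0$.
\end{enumerate}

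The step I expect to need the most care is (i). The DoF count hinges on the sum in \eqref{eq:essential-bubble-space} being direct, including when several $\alpha$ share the same face $F_\alpha$ with different layers $r_\alpha$. The linear independence of the $\xi_\alpha$, guaranteed by their choice via Lemma~\ref{lem:decomposition_sdm2}, is exactly what handles this case. I also need $\mathbb B_{\rm ess}(\Sigma_k;K)\subset\mathbb B_k(\Sigma_k;K)$. This holds because $r_\alpha\le\lfloor m/\nu\rfloor$, so each summand lies in the block $\mathbb B_{\rm ess}(T_\alpha)$ of \eqref{eq:full-bubble-space}.
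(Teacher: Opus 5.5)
Your proof is correct and follows essentially the same route as the paper. Vanishing face moments remove the trace part, testing \eqref{Ksymtensordof2} with $\sigma_{\rm rem}$ removes the remainder by $L^2(K)$-orthogonality, and testing \eqref{Ksymtensordof3} with $q_\beta$ times the Frobenius-dual tensor gives $\int_{T_\beta}\lambda_\beta^{r_\beta}q_\beta^2\,\mathrm dx=0$. Your part (i) also proves the identity $\dim\mathbb B_{\rm ess}(\Sigma_k;K)=\dim\mathbb P_k(K;\mathbb S^{d,m})$, via directness of the sum from the linear independence of the $\xi_\alpha$ on a shared cell, which the paper only asserts.
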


\begin{proof}
By Lemma~\ref{lem:Ktensor-unisolvence}, it is enough to prove that the moments in \eqref{Ksymtensordof2}--\eqref{Ksymtensordof3} are equivalent to the bubble moments in \eqref{Ktensordof2}. Let $\sigma\in\mathbb B_k(\Sigma_k;K)$ and assume that all degrees of freedom \eqref{Ksymtensordof2}--\eqref{Ksymtensordof3} vanish. By \eqref{eq:bubble-essential-rem-split}, write
$$
\sigma=\sigma_{\rm rem}+\sigma_{\rm ess},
\qquad
\sigma_{\rm rem}\in\mathbb B_{\rm rem}(\Sigma_k;K),
\quad
\sigma_{\rm ess}\in\mathbb B_{\rm ess}(\Sigma_k;K).
$$
Due to the $L^2(K)$-orthogonality, taking $\tau=\sigma_{\rm rem}$ in \eqref{Ksymtensordof2} gives
$$
0=(\sigma,\sigma_{\rm rem})_K=\|\sigma_{\rm rem}\|_{0,K}^2,
$$
so $\sigma_{\rm rem}=0$. Hence
\begin{equation}\label{eq:sigma-ess-expansion}
\sigma=\sigma_{\rm ess}
=
\sum_{\alpha=1}^{N_m}
\lambda_\alpha^{r_\alpha}
q_\alpha\xi_\alpha\,\chi_{T_\alpha},
\qquad
q_\alpha\in\mathbb P_k(T_\alpha).
\end{equation}
Let $\{\widehat\xi_\alpha\}_{\alpha=1}^{N_m}$ be the Frobenius-dual basis of $\{\xi_\alpha\}_{\alpha=1}^{N_m}$, so that $\xi_\alpha:\widehat\xi_\beta=\delta_{\alpha\beta}$. Extend each $q_\alpha$ to a polynomial in $\mathbb P_k(K)$, still denoted by $q_\alpha$. Taking $\tau=q_\alpha\widehat\xi_\alpha$ in \eqref{Ksymtensordof3} gives
$$
0
=
(\sigma,q_\alpha\widehat\xi_\alpha)_K
=
\int_{T_\alpha}
\lambda_\alpha^{r_\alpha}q_\alpha^2\,\mathrm dx .
$$
Since $\lambda_\alpha^{r_\alpha}\ge0$ on $T_\alpha$ and is positive in its interior, $q_\alpha=0$. Thus $\sigma=0$. The number of degrees of freedom is unchanged by the splitting \eqref{eq:bubble-essential-rem-split} and $\dim\mathbb B_{\rm ess}(\Sigma_k;K)=\dim\mathbb P_k(K;\mathbb S^{d,m})$. Hence the degrees of freedom are unisolvent.
\end{proof}

\section{Staggered Discontinuous Galerkin Methods}\label{sec:sdg}

In this section, we define the staggered discontinuous Galerkin method for the polyharmonic problem. The tensor variable is normal-trace continuous across primal faces, while the scalar variable is locally $H^m$ on each primal element. 

\subsection{Primal and dual meshes}\label{sec:mesh}

Let $\mathcal K_h$ be a shape-regular polytopal partition of $\Omega\subset\mathbb R^d$. Let $\mathcal F_h$ be the set of faces, $\mathcal F_h^\partial$ the set of boundary faces, and
$\mathring{\mathcal F}_h:=\mathcal F_h\setminus\mathcal F_h^\partial$
the set of interior faces. We call $\mathcal K_h$ the primal mesh.

For each $K\in\mathcal K_h$, choose a point $c_K\in K$. For each face $F\subset\partial K$, define
$$
T_F:=\operatorname{conv}(c_K,F),
\qquad
K^{\rm R}:=\{T_F:\ F\subset\partial K\}.
$$
The collection of all such cells gives a refinement $\mathcal K_h^{\rm R}$ of $\mathcal K_h$. We assume that the points $c_K$ are chosen so that $\mathcal K_h^{\rm R}$ is uniformly shape regular, and that the number of faces of each element is uniformly bounded. 

Let $\mathcal F_h^{\rm R}$ be the set of faces of the refined mesh $\mathcal K_h^{\rm R}$ and define $\mathring{\mathcal F}_h^* := \mathcal F_h^{\rm R}\setminus\mathcal F_h.$ The faces in $\mathring{\mathcal F}_h^*$ are the dual faces; they lie inside primal elements. For each primal face $F\in\mathcal F_h$, let $\omega_F$ be the union of refined cells in $\mathcal K_h^{\rm R}$ that contain $F$. The dual mesh is
$$
\mathcal K_h^*
:=
\{\omega_F:\ F\in\mathcal F_h\}.
$$

For each face $F$, fix a unit normal $\boldsymbol n_F$. When no confusion can arise, we write $\boldsymbol n$ for the relevant unit normal. For an interior face $F$, the jump $[\cdot]$ is taken with respect to $\boldsymbol n_F$. On a boundary face, we use the convention $[w]=w$.

\begin{figure}[htp]
\begin{center}
\includegraphics[width=4.5cm]{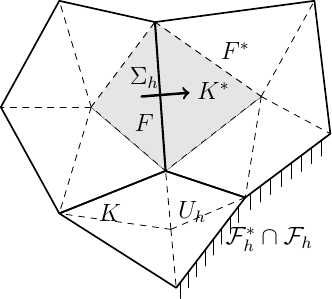}
\caption{Staggered meshes. Solid lines represent primal faces, while dashed lines represent dual faces.}
\label{fig:SDGmesh}
\end{center}
\end{figure}

\subsection{Broken spaces and weak operators}

Define the broken scalar space
$$
H^m(\mathcal K_h)
:=
\{v\in L^2(\Omega):\ v|_K\in H^m(K)\quad\text{for all }K\in\mathcal K_h\}.
$$
For the tensor variable, define
$$
H(\operatorname{div}^m,\mathcal K_h^*;\mathbb S^{d,m})
:=
\left\{
\tau\in L^2(\Omega;\mathbb S^{d,m}):
\tau|_{K^*}\in H(\operatorname{div}^m,K^*;\mathbb S^{d,m})
\ \text{for all }K^*\in\mathcal K_h^*
\right\}.
$$
Equivalently,
$$
H(\operatorname{div}^m,\mathcal K_h^*;\mathbb S^{d,m})
=
\left\{
\tau\in H^m(\mathcal K_h^{\rm R};\mathbb S^{d,m}):
[\operatorname{tr}_F^{\operatorname{div}^m}\tau]=0
\quad\text{for all }F\in\mathring{\mathcal F}_h
\right\},
$$
where
$$
\operatorname{tr}_F^{\operatorname{div}^m}\tau =
(\gamma_0(\tau),\gamma_1(\tau),\ldots,\gamma_{m-1}(\tau)),
\
\gamma_i(\tau) =
((\operatorname{div}^i\tau)\mathbin{\lrcorner}\boldsymbol n_F)|_F,
\
i = 0, \ldots, m-1.
$$
Thus the normal trace components are single-valued across primal faces. The tensor variable may jump across dual faces, while the scalar variable may jump across primal faces.

For $u\in H^m(\mathcal K_h)$, define the weak gradient by
\begin{equation}\label{eq:weakgrad}
\begin{aligned}
\langle \nabla_w^m u,\tau\rangle
:=\;&
\sum_{K\in\mathcal K_h}
(\nabla^m u,\tau)_K\\
&+
\sum_{j=0}^{m-1}
(-1)^{m-j}
\sum_{F\in\mathcal F_h}
\left(
[\nabla^j u],
(\operatorname{div}^{m-1-j}\tau)\mathbin{\lrcorner}\boldsymbol n
\right)_F,
\end{aligned}
\end{equation}
for all $\tau\in H(\operatorname{div}^m,\mathcal K_h^*;\mathbb S^{d,m})$. The jump terms in \eqref{eq:weakgrad} appear only on primal faces, because $u$ is single-valued and $H^m$ inside each primal element.

For $\sigma\in H(\operatorname{div}^m,\mathcal K_h^*;\mathbb S^{d,m})$, define the weak divergence by
\begin{equation}\label{eq:weakdiv}
\begin{aligned}
\langle \operatorname{div}_w^m\sigma,v\rangle
:=\;&
\sum_{K^*\in\mathcal K_h^*}
(\operatorname{div}^m\sigma,v)_{K^*}\\
&+
\sum_{i=0}^{m-1}
(-1)^{m-i}
\sum_{F^*\in\mathring{\mathcal F}_h^*}
\left(
[(\operatorname{div}^{i}\sigma)\mathbin{\lrcorner}\boldsymbol n],
\nabla^{m-i-1}v
\right)_{F^*},
\end{aligned}
\end{equation}
for all $v\in H^m(\mathcal K_h)$. The jump terms in \eqref{eq:weakdiv} appear only on dual faces, because the corresponding normal traces of $\sigma$ are single-valued on primal faces.

\begin{lemma}[Adjointness]\label{lem:weak-adjointness}
For all $\sigma\in H(\operatorname{div}^m,\mathcal K_h^*;\mathbb S^{d,m})$ and all $v\in H^m(\mathcal K_h)$,
\begin{equation}\label{eq:weak-adjointness}
\langle \operatorname{div}_w^m\sigma,v\rangle
=
(-1)^m
\langle \sigma, \nabla_w^m v\rangle .
\end{equation}
\end{lemma}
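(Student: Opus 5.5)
The plan is to work cell by cell on the refined mesh $\mathcal K_h^{\rm R}$, where both $\sigma$ and $v$ are smooth: $\sigma|_T\in H^m(T;\mathbb S^{d,m})$ and $v|_T\in H^m(T)$ for every $T\in\mathcal K_h^{\rm R}$. (If $\sigma$ is only in $H(\operatorname{div}^m)$ on dual cells, first prove the identity for smooth piecewise data and then pass to the limit by density, since both sides are continuous in the natural broken norms.)

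On a single cell $T$, integrate by parts $m$ times, moving one derivative at a time from $\sigma$ to $v$. This gives the iterated Green formula
\begin{equation*}
(\operatorname{div}^m\sigma,v)_T
=(-1)^m(\sigma,\nabla^m v)_T
+\sum_{i=0}^{m-1}(-1)^{m-1-i}\big((\operatorname{div}^{i}\sigma)\mathbin{\lrcorner}\boldsymbol n_{\partial T},\nabla^{m-1-i}v\big)_{\partial T}.
\end{equation*}
This follows by induction: at each step use $(\operatorname{div}\rho,w)_T=-(\rho,\nabla w)_T+(\rho\mathbin{\lrcorner}\boldsymbol n,w)_{\partial T}$ with $\rho=\operatorname{div}^{i}\sigma$ and $w=\nabla^{m-1-i}v$, together with the symmetry of $\sigma$ so that the contraction conventions match. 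I will track the signs carefully in this step.

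Next, sum the cell identity over all $T\in\mathcal K_h^{\rm R}$. The left side is $\sum_{K^*}(\operatorname{div}^m\sigma,v)_{K^*}$, and the volume term on the right is $(-1)^m\sum_K(\sigma,\nabla^m v)_K$. The boundary terms split into two kinds of faces.

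\emph{Dual faces} $F^*\in\mathring{\mathcal F}_h^*$. These lie inside a primal element, so the traces $\nabla^{m-1-i}v$ are single-valued there. The two contributions combine into
\begin{equation*}
(-1)^{m-1-i}\big([(\operatorname{div}^i\sigma)\mathbin{\lrcorner}\boldsymbol n],\nabla^{m-1-i}v\big)_{F^*}.
\end{equation*}

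\emph{Primal faces} $F\in\mathcal F_h$. Here the normal traces $\gamma_i(\sigma)$ are single-valued by the definition of $H(\operatorname{div}^m,\mathcal K_h^*)$, so the contributions combine into
\begin{equation*}
(-1)^{m-1-i}\big(\gamma_i(\sigma),[\nabla^{m-1-i}v]\big)_F.
\end{equation*}
On boundary faces the convention $[w]=w$ covers this automatically.

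Moving the dual-face terms to the left side produces $\langle\operatorname{div}_w^m\sigma,v\rangle$ as defined in \eqref{eq:weakdiv}, since $-(-1)^{m-1-i}=(-1)^{m-i}$. For the right side, reindex the primal-face terms with $j=m-1-i$, which gives
\begin{equation*}
(-1)^{j}\big([\nabla^j v],(\operatorname{div}^{m-1-j}\sigma)\mathbin{\lrcorner}\boldsymbol n\big)_F.
\end{equation*}
Comparing with $(-1)^m$ times the jump term in \eqref{eq:weakgrad}, namely $(-1)^m(-1)^{m-j}=(-1)^{j}$, the signs agree. Hence the right side is exactly $(-1)^m\langle\sigma,\nabla_w^m v\rangle$, which proves \eqref{eq:weak-adjointness}.

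I expect the main obstacle to be bookkeeping rather than ideas. Three points need checking:
\begin{enumerate}[label=(\roman*)]
\item the sign conventions for jumps relative to $\boldsymbol n_F$, which must match on both sides of each face;
\item the fact that the contraction $\rho\mathbin{\lrcorner}\boldsymbol n$ against $\nabla^{m-1-i}v$ is consistent with symmetrization, which holds because $\operatorname{div}^i\sigma$ is symmetric;
\item that no cross terms of the form jump times jump arise on any face.
\end{enumerate}
Point (iii) holds because each face is either primal or dual. On a primal face only $v$ jumps, and on a dual face only $\sigma$ jumps, so the product of two differences never appears.
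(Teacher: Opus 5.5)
Your proposal is correct and follows the same argument as the paper. You integrate by parts $m$ times on each refined cell $T\in\mathcal K_h^{\rm R}$, use that $\gamma_i(\sigma)$ is single-valued on primal faces and $\nabla^{m-1-i}v$ on dual faces, and collect terms. Your iterated Green formula and the sign checks $-(-1)^{m-1-i}=(-1)^{m-i}$ and $(-1)^m(-1)^{m-j}=(-1)^j$ supply the bookkeeping the paper leaves implicit.

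The density parenthetical is unnecessary. It would not really work for genuinely $H(\operatorname{div}^m)$ data, because the dual-face terms in \eqref{eq:weakdiv} need traces. The paper instead uses the equivalent description of $H(\operatorname{div}^m,\mathcal K_h^*;\mathbb S^{d,m})$ as piecewise $H^m(\mathcal K_h^{\rm R})$ tensors with continuous normal traces on primal faces, which is exactly the setting of your main argument.
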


\begin{proof}
Integrating by parts $m$ times on each refined cell $T\in\mathcal K_h^{\rm R}$ gives volume terms together with face terms. On primal faces, the normal trace components
$$
(\operatorname{div}^{i}\sigma)\mathbin{\lrcorner}\boldsymbol n,
\qquad i=0,\ldots,m-1,
$$
are single-valued by the definition of $H(\operatorname{div}^m,\mathcal K_h^*;\mathbb S^{d,m})$. On dual faces, the traces of $\nabla^{m-i-1}v$ are single-valued because $v\in H^m(K)$ on each primal element. Collecting the remaining primal- and dual-face terms gives exactly \eqref{eq:weakdiv} and \eqref{eq:weakgrad}, and hence \eqref{eq:weak-adjointness}.
\end{proof}


For $k\ge0$, define the scalar space
$$
U_{h,k+m}
:=
\{u_h\in L^2(\Omega):\ u_h|_K\in\mathbb P_{k+m}(K)
\quad\text{for all }K\in\mathcal K_h\}.
$$
Let $\Sigma_k(T)$ be the local tensor space \eqref{eq:decomposition-tr-bubble} on each refined cell $T\in K^{\rm R}$. Define
$$
\Sigma_k^{-1}(K^{\rm R})
:=
\prod_{T\in K^{\rm R}}\Sigma_k(T).
$$
The global tensor space is
\begin{align}
\Sigma_{h,k}
:=
\big\{
\sigma_h\in L^2(\Omega;\mathbb S^{d,m}):
&\ \sigma_h|_K\in\Sigma_k^{-1}(K^{\rm R})
\quad\text{for all }K\in\mathcal K_h,
\notag\\
&\ \text{the trace DoFs in \eqref{Ktensordof1} are single-valued across every }
F\in\mathring{\mathcal F}_h
\big\}.
\label{eq:global-stress-space}
\end{align}
Then
$$
\Sigma_{h,k}
\subset
H(\operatorname{div}^m,\mathcal K_h^*;\mathbb S^{d,m}).
$$


\subsection{The Staggered Discontinuous Galerkin method}

For $\sigma_h,\tau_h\in\Sigma_{h,k}$ and $v_h\in U_{h,k+m}$, define
\begin{align*}
a(\sigma_h,\tau_h)
&:=
(\sigma_h,\tau_h), \\
b_h(\sigma_h,v_h)
&:=
(-1)^{m-1}
\langle \operatorname{div}_w^m\sigma_h,v_h\rangle
=
-\langle \nabla_w^m v_h,\sigma_h\rangle.
\end{align*}
Equivalently, by \eqref{eq:weakgrad},
\begin{equation}\label{eq:bh-expanded}
\begin{aligned}
b_h(\sigma_h,v_h)
={}&
-\sum_{K\in\mathcal K_h}
(\sigma_h,\nabla^m v_h)_K \\
&+
\sum_{i=0}^{m-1}
(-1)^i
\sum_{F\in\mathcal F_h}
\left(
(\operatorname{div}^i\sigma_h)\mathbin{\lrcorner}\boldsymbol n,
[\nabla^{m-i-1}v_h]
\right)_F .
\end{aligned}
\end{equation}

We reindex $i$ by $j$ with relation
\begin{equation}\label{eq:kj-def}
j=m-i-1, k_j:=\max\{k-(m-1-j),0\}=(k-i)^+.
\end{equation}
This index $k_j$ is the effective polynomial degree of the stress trace paired with
$[\nabla^j v_h]$ in \eqref{eq:bh-expanded}. Indeed, with $j=m-i-1$, the jump
$[\nabla^j v_h]$ is paired with
$$
\gamma_{m-1-j}(\sigma_h)
=
(\operatorname{div}^{m-1-j}\sigma_h)\mathbin{\lrcorner}\boldsymbol n,
$$
whose degree is at most $k_j$. Hence only the projected jump
$Q_{F,k_j}[\nabla^j v_h]$ is seen by $b_h(\cdot,\cdot)$. Here $Q_{F,r}$ is the $L^2(F)$ projection onto $\mathbb P_r(F)$, applied componentwise.

For $v\in H^m(\mathcal K_h)$, define
\begin{equation}\label{eq:U-norm}
\|v\|_{m,h}^2
:=
|v|_{H^m(\mathcal K_h)}^2
+
\sum_{j=0}^{m-1}
\sum_{F\in\mathcal F_h}
h_F^{2j-2m+1}
\left\|
Q_{F,k_j}
[\nabla^j v]
\right\|_F^2.
\end{equation}
By the Poincar\'e inequality for broken $H^1$ spaces in~\cite[Remark 1.1]{Brenner2003},
\begin{equation}\label{eq:hybrid-poincare}
\|v\|_{H^{m-1}(\mathcal K_h)}
\lesssim
\|v\|_{m,h}
\qquad
\forall~v\in H^m(\mathcal K_h).
\end{equation}
Thus $\|\cdot\|_{m,h}$ is a norm on $H^m(\mathcal K_h)$.

For $\tau_h\in\Sigma_{h,k}$, define
\begin{equation}\label{eq:Sigma-norm}
\|\tau_h\|_{0,h}^2
:=
\|\tau_h\|^2
+
\sum_{i=0}^{m-1}
\sum_{F\in\mathcal F_h}
h_F^{2i+1}
\left\|
(\operatorname{div}^i\tau_h)\mathbin{\lrcorner}\boldsymbol n
\right\|_F^2.
\end{equation}
By inverse trace inequalities on the finite-dimensional local tensor spaces,
\begin{equation}\label{eq:sigmaJ-equivalence}
\|\tau_h\|_{0,h}
\eqsim
\|\tau_h\|
\qquad
\forall~\tau_h\in\Sigma_{h,k}.
\end{equation}

The SDG method is: find $(\sigma_h,u_h)\in\Sigma_{h,k}\times U_{h,k+m}$ such that
\begin{equation}\label{SDG}
\begin{cases}
a(\sigma_h,\tau_h)+b_h(\tau_h,u_h)=0,
& \quad \forall~\tau_h\in\Sigma_{h,k},\\
b_h(\sigma_h,v_h)=-(f,v_h),
& \quad \forall~v_h\in U_{h,k+m}.
\end{cases}
\end{equation}

By \eqref{eq:sigmaJ-equivalence}, $a(\cdot,\cdot)$ is coercive on $\Sigma_{h,k}$ with respect to $\|\cdot\|_{0, h}$:
\begin{equation}\label{eq:acoercive}
a(\tau_h,\tau_h)
=
\|\tau_h\|^2
\gtrsim
\|\tau_h\|_{0, h}^2.
\end{equation}
Moreover, based on \eqref{eq:bh-expanded}, using the Cauchy-Schwarz inequality with correct scaling, we have
\begin{equation}\label{eq:bcontinuity}
|b_h(\sigma_h,v)|
\lesssim
\|\sigma_h\|_{0, h}\|v\|_{m, h}
\qquad
\forall~\sigma_h\in\Sigma_{h,k},\quad v\in H^m(\mathcal K_h).
\end{equation}

The key is to verify the following inf-sup condition.
\begin{lemma}[Discrete inf-sup condition]\label{lem:discreteinfsup}
There exists a constant $\beta>0$, independent of $h$, such that
\begin{equation}\label{discreteinfsup}
\inf_{u_h\in U_{h,k+m}}
\sup_{0\ne \sigma_h\in\Sigma_{h,k}}
\frac{b_h(\sigma_h,u_h)}
{\|\sigma_h\|_{0, h}\|u_h\|_{m, h}}
\ge \beta .
\end{equation}
\end{lemma}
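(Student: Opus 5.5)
Given $u_h\in U_{h,k+m}$, I will construct $\sigma_h\in\Sigma_{h,k}$ element by element through the degrees of freedom of Lemma~\ref{lem:dof-equal}, so that $b_h(\sigma_h,u_h)\gtrsim\|u_h\|_{m,h}^2$ and $\|\sigma_h\|_{0,h}\lesssim\|u_h\|_{m,h}$. The expanded form \eqref{eq:bh-expanded} separates a volume pairing $-(\sigma_h,\nabla^m u_h)_K$ from face pairings of $\gamma_i(\sigma_h)$ with $[\nabla^{m-1-i}u_h]$. Since $\nabla^m u_h|_K\in\mathbb P_k(K;\mathbb S^{d,m})$, the moments \eqref{Ksymtensordof3} control exactly this volume term. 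The trace moments \eqref{Ksymtensordof1} are single-valued on primal faces and are tested against $\mathbb P_{k_j}(F;\mathbb S^{d,j})$, which is precisely where $Q_{F,k_j}[\nabla^j u_h]$ lives (with $j=m-1-i$).

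\emph{Definition of $\sigma_h$.} On each $K$, set the DoFs as follows:
\[
\int_F\gamma_i(\sigma_h):\tau\,\mathrm dS=(-1)^i h_F^{2(m-1-i)-2m+1}\bigl(Q_{F,k_{m-1-i}}[\nabla^{m-1-i}u_h],\tau\bigr)_F,
\]
\[
\int_K\sigma_h:\tau\,\mathrm dx=-(\nabla^m u_h,\tau)_K\quad\text{for }\tau\in\mathbb P_k(K;\mathbb S^{d,m}),
\]
and take the $\mathbb B_{\rm rem}$ moments to be zero. The face data depend only on $F$, so $\sigma_h$ lies in $\Sigma_{h,k}$. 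Since $\gamma_i(\sigma_h)\in\mathbb P_{(k-i)^+}(F;\mathbb S^{d,m-i-1})$ by construction of $\operatorname{Tr}(\Sigma_k;T)$, the first set of moments forces
\[
\gamma_i(\sigma_h)=(-1)^ih_F^{-2i-1}Q_F[\nabla^{m-1-i}u_h].
\]
Substituting into \eqref{eq:bh-expanded} gives exactly
\[
b_h(\sigma_h,u_h)=\sum_K|u_h|_{H^m(K)}^2+\sum_{j}\sum_F h_F^{2j-2m+1}\|Q_{F,k_j}[\nabla^ju_h]\|_F^2=\|u_h\|_{m,h}^2 .
\]
This uses that $-(\sigma_h,\nabla^mu_h)_K=\|\nabla^mu_h\|_K^2$, which holds because $\nabla^mu_h|_K$ is itself an admissible test tensor in the interior moments.

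\emph{Bound on $\sigma_h$.} By \eqref{eq:sigmaJ-equivalence} it suffices to bound $\|\sigma_h\|_{0,K}$. I will use a scaling argument on each shape-regular cell $T\in K^{\rm R}$ with the unisolvent DoFs of Lemma~\ref{lem:dof-equal}. Mapped to a reference configuration, all norms on the finite-dimensional space are equivalent, which gives
\[
\|\sigma_h\|_{0,K}^2\lesssim\sum_{F\subset\partial K}\sum_i h_F^{2i+1}\|\gamma_i(\sigma_h)\|_F^2+\|\nabla^mu_h\|_K^2 .
\]
Here the interior moments are converted back to $L^2$ norms through the Riesz representers in $\mathbb P_k(K;\mathbb S^{d,m})$. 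Inserting the formula for $\gamma_i(\sigma_h)$, the face terms become
\[
h_F^{2i+1}h_F^{-4i-2}\|Q_F[\nabla^{m-1-i}u_h]\|_F^2=h_F^{2j-2m+1}\|Q_F[\nabla^ju_h]\|_F^2,\qquad j=m-1-i,
\]
so $\|\sigma_h\|_{0,h}\lesssim\|u_h\|_{m,h}$ and the inf-sup constant $\beta$ follows.

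\emph{Main obstacle.} The hard part is the uniform stability of the DoF-to-function map with respect to the DoFs of Lemma~\ref{lem:dof-equal}. Those moments are posed on the whole polytope $K$, not on a single simplex, and the essential-bubble argument relies on the basis $\{\xi_\alpha\}$ from Lemma~\ref{lem:decomposition_sdm2}. I would first establish a uniform lower bound on the conditioning of this basis. Choosing $\xi_\alpha$ from a finite set determined by the face normals, together with the shape regularity of $K^{\rm R}$ and the bounded number of faces, should give a compactness argument over a compact family of admissible reference configurations. If this becomes delicate, I would instead rely on the remark's fuller polynomial space, or verify the bound directly through the proof of Lemma~\ref{lem:dof-equal}: the Gram-type integral $\int_{T_\alpha}\lambda_\alpha^{r_\alpha}q_\alpha^2$ is uniformly equivalent to $\|q_\alpha\|_{T_\alpha}^2$ on shape-regular cells.
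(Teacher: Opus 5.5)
Your proposal is correct and is essentially the paper's own proof. It uses the same DoF-based choice of $\sigma_h$: volume moments $-(\nabla^m u_h,\tau)_K$ against $\mathbb P_k(K;\mathbb S^{d,m})$, trace moments $(-1)^i h_F^{-2i-1}Q_{F,(k-i)^+}[\nabla^{m-i-1}u_h]$, and zero $\mathbb B_{\rm rem}$ moments. It reaches the same exact identity $b_h(\sigma_h,u_h)=\|u_h\|_{m,h}^2$ and the same scaling bound $\|\sigma_h\|_{0,h}\lesssim\|u_h\|_{m,h}$.

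The paper asserts the uniform stability of the DoF map simply ``by scaling,'' so your discussion of that step already goes further than the paper. Note, however, that your compactness argument would additionally require the face normals used to build $\{\xi_\alpha\}$ to stay uniformly non-parallel. Otherwise the dual basis $\{\widehat\xi_\alpha\}$ in Lemma~\ref{lem:dof-equal} can degenerate, and this is not implied by shape regularity of $\mathcal K_h^{\rm R}$ alone.
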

\begin{proof}
Let $u_h\in U_{h,k+m}$ be fixed. Since
$u_h|_K\in\mathbb P_{k+m}(K)$, we have
$\nabla^m u_h|_K\in \mathbb P_k(K;\mathbb S^{d,m}).$
We define $\sigma_h\in\Sigma_{h,k}$ by the degrees of freedom in
Lemma~\ref{lem:dof-equal}.

First, for each $K\in\mathcal K_h$, prescribe the polynomial moment DoFs by
\begin{equation}\label{eq:infsup-volume-dof}
(\sigma_h,\tau)_K
=
-(\nabla^m u_h,\tau)_K
\qquad
\forall~\tau\in\mathbb P_k(K;\mathbb S^{d,m}).
\end{equation}
Second, on each face $F\in\mathcal F_h$ and for $i=0,\ldots,m-1$, prescribe the trace DoFs by
\begin{equation}\label{eq:infsup-face-dof}
(\operatorname{div}^i\sigma_h)\mathbin{\lrcorner}\boldsymbol n
=
(-1)^i h_F^{-2i-1}
Q_{F,(k-i)^+}
[\nabla^{m-i-1}u_h]
\qquad\text{on }F,
\end{equation}
in the sense of the face moments \eqref{Ksymtensordof1}. On an interior face, the jump is taken with respect to the fixed normal $\boldsymbol n_F$, so the prescribed trace is single-valued. Finally, set all remaining bubble DoFs in
$\mathbb B_{\rm rem}(\Sigma_k;K)$ to zero. By Lemma~\ref{lem:dof-equal}, these data determine a unique
$\sigma_h\in\Sigma_{h,k}$.

Using \eqref{eq:bh-expanded}, \eqref{eq:infsup-volume-dof}, and
\eqref{eq:infsup-face-dof}, we obtain
\begin{align*}
b_h(\sigma_h,u_h)
&=
|u_h|_{H^m(\mathcal K_h)}^2 +
\sum_{i=0}^{m-1}
\sum_{F\in\mathcal F_h}
h_F^{-2i-1}
\left\|
Q_{F,(k-i)^+}
[\nabla^{m-i-1}u_h]
\right\|_F^2 .
\end{align*}
Re-indexing the jump terms by $j=m-i-1$ and using $(k-i)^+ = k_j$ gives
\[
b_h(\sigma_h,u_h)
=
\|u_h\|_{m, h}^2 .
\]

It remains to bound $\sigma_h$. By the scaling of the degrees of freedom in
Lemma~\ref{lem:dof-equal}, the prescribed face moments give
\[
\sum_{i=0}^{m-1}
\sum_{F\in\mathcal F_h}
h_F^{2i+1}
\left\|
(\operatorname{div}^i\sigma_h)\mathbin{\lrcorner}\boldsymbol n
\right\|_F^2
\lesssim
\sum_{i=0}^{m-1}
\sum_{F\in\mathcal F_h}
h_F^{-2i-1}
\left\|
Q_{F,(k-i)^+}
[\nabla^{m-i-1}u_h]
\right\|_F^2 .
\]
Similarly, the polynomial moment DoFs \eqref{eq:infsup-volume-dof}, together with the fact that the
$\mathbb B_{\rm rem}(\Sigma_k;K)$ DoFs are zero, give
\[
\|\sigma_h\|^2
\lesssim
|u_h|_{H^m(\mathcal K_h)}^2
+
\sum_{i=0}^{m-1}
\sum_{F\in\mathcal F_h}
h_F^{-2i-1}
\left\|
Q_{F,(k-i)^+}
[\nabla^{m-i-1}u_h]
\right\|_F^2 .
\]
Hence
\[
\|\sigma_h\|_{0, h}
\lesssim
\|u_h\|_{m, h}.
\]
Therefore
\[
\sup_{0\ne\sigma_h\in\Sigma_{h,k}}
\frac{b_h(\sigma_h,u_h)}{\|\sigma_h\|_{0, h}}
\gtrsim
\|u_h\|_{m, h}.
\]
Taking the infimum over $u_h\in U_{h,k+m}$ proves
\eqref{discreteinfsup}.
\end{proof}

Then by the Babu\v{s}ka--Brezzi theory, we have the following well-posedness.
\begin{theorem}\label{thm:SDG-wellposed}
The SDG method \eqref{SDG} is well posed. Moreover,
\begin{equation}\label{eq:SDG-stability}
\|\sigma_h\|_{0, h}
+
\|u_h\|_{m, h}
\lesssim
\sup_{0\ne v_h\in U_{h,k+m}}
\frac{(f,v_h)}{\|v_h\|_{m, h}}.
\end{equation}
\end{theorem}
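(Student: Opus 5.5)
The plan is to apply the Babu\v{s}ka--Brezzi theory to the saddle-point system \eqref{SDG} on the finite-dimensional spaces $\Sigma_{h,k}\times U_{h,k+m}$. We equip $\Sigma_{h,k}$ with $\|\cdot\|_{0,h}$ and $U_{h,k+m}$ with $\|\cdot\|_{m,h}$; the latter is a norm by \eqref{eq:hybrid-poincare}. Three ingredients are needed, all with constants independent of $h$.

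\begin{enumerate}
\item Continuity. The bound $a(\sigma,\tau)\le\|\sigma\|\|\tau\|\le\|\sigma\|_{0,h}\|\tau\|_{0,h}$ is immediate, and continuity of $b_h$ is exactly \eqref{eq:bcontinuity}.
\item Coercivity of $a$. By \eqref{eq:acoercive}, $a$ is coercive on all of $\Sigma_{h,k}$, not merely on the discrete kernel $\{\tau_h: b_h(\tau_h,v_h)=0\ \forall v_h\}$. This is stronger than what Brezzi requires.
\item The inf-sup condition. This is Lemma~\ref{lem:discreteinfsup}.
\end{enumerate}

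With these in hand, the Brezzi theorem yields existence and uniqueness of $(\sigma_h,u_h)$. Since the system is square and finite dimensional, uniqueness alone already suffices: if $f=0$, test the first equation with $\tau_h=\sigma_h$ and the second with $v_h=u_h$ to get $\|\sigma_h\|^2=0$. The first equation then gives $b_h(\tau_h,u_h)=0$ for all $\tau_h$, and Lemma~\ref{lem:discreteinfsup} forces $u_h=0$.

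For the stability estimate \eqref{eq:SDG-stability} we argue directly rather than quoting the abstract constant. Write $\|f\|_*:=\sup_{v_h}(f,v_h)/\|v_h\|_{m,h}$.

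\begin{enumerate}
\item Bound $u_h$ by $\sigma_h$. From Lemma~\ref{lem:discreteinfsup} and the first equation of \eqref{SDG},
\[
\beta\|u_h\|_{m,h}\le\sup_{\tau_h}\frac{b_h(\tau_h,u_h)}{\|\tau_h\|_{0,h}}=\sup_{\tau_h}\frac{-(\sigma_h,\tau_h)}{\|\tau_h\|_{0,h}}\le\|\sigma_h\|.
\]
\item Energy identity. Take $\tau_h=\sigma_h$ in the first equation and $v_h=u_h$ in the second and subtract, giving $\|\sigma_h\|^2=-b_h(\sigma_h,u_h)=(f,u_h)\le\|f\|_*\|u_h\|_{m,h}$.
\item Close the loop. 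Combining the two steps gives $\|\sigma_h\|^2\le\beta^{-1}\|f\|_*\|\sigma_h\|$, so $\|\sigma_h\|\lesssim\|f\|_*$. Step 1 then gives $\|u_h\|_{m,h}\lesssim\|f\|_*$.
\item Convert norms. Finally, \eqref{eq:sigmaJ-equivalence} turns $\|\sigma_h\|$ into $\|\sigma_h\|_{0,h}$.
\end{enumerate}

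No step is a genuine obstacle, since the hard work was done in Lemma~\ref{lem:discreteinfsup}. The only point to check is that every constant ($\beta$, the equivalence constant in \eqref{eq:sigmaJ-equivalence}, and the Poincar\'e constant) is $h$-independent. This follows from the shape regularity of $\mathcal K_h^{\rm R}$ and the uniform bound on the number of faces per element.
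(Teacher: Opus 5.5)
Your proposal is correct and follows the paper's route. The paper simply invokes Babu\v{s}ka--Brezzi theory with exactly the three ingredients you verify: the coercivity \eqref{eq:acoercive}, the continuity \eqref{eq:bcontinuity}, and the inf-sup condition of Lemma~\ref{lem:discreteinfsup}. Your explicit argument for \eqref{eq:SDG-stability}, combining $\beta\|u_h\|_{m,h}\le\|\sigma_h\|$ with $\|\sigma_h\|^2=(f,u_h)$, just spells out what the abstract theorem gives when the first equation has zero data.
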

%
%

\section{Hybridization}\label{sec:hybrid}

This section hybridizes the staggered DG method by using a fully broken stress space and scalar trace unknowns only on primal faces. The resulting mixed method is stable, and local elimination of the stress variable gives an equivalent stabilization-free weak Galerkin formulation. An optimal-order error analysis is given in the energy norm. 

\subsection{Broken spaces and weak operators}

Following the standard hybridization idea~\cite{arnold1985mixed}, we use a fully broken stress space on the refined mesh $\mathcal K_h^{\rm R}$ and scalar trace unknowns on primal faces. For $j=0,\ldots,m-1$, recall that
$k_j=\max\{k-m+1+j,0\}$ is the degree of the stress trace paired with the $j$-th scalar trace. Define the hybrid scalar space by
\begin{align*}
M_h
:=
\big\{
u_h={}&(u_0,u_{g,0},\ldots,u_{g,m-1}):\
u_0|_K\in\mathbb P_{k+m}(K)
\quad \text{for all }K\in\mathcal K_h,\\
&\quad
u_{g,j}|_F
\in
\mathbb P_{k_j}(F;\mathbb S^{d,j})
\quad \text{for all }F\in\mathcal F_h,\
j=0,\ldots,m-1
\big\}.
\end{align*}
Thus only the traces on primal faces are independent hybrid unknowns. Let
$$
M_h^0
:=
\{u_h\in M_h:\ u_{g,j}|_{\partial\Omega}=0,\quad j=0,\ldots,m-1\}.
$$

The stress space is fully broken on the refined mesh:
\begin{equation}\label{eq:broken-hybrid-tensor-space}
\Sigma_{h,k}^{-1}
:=
\prod_{T\in\mathcal K_h^{\rm R}}\Sigma_k(T).
\end{equation}

For $u_h\in M_h$, define $\nabla_w^m u_h\in\Sigma_{h,k}^{-1}$ cellwise. Let $T=T_F\subset K$ be the refined cell attached to the primal face $F\subset\partial K$. Then $\nabla_{w,T}^m u_h\in\Sigma_k(T)$ is defined by
\begin{equation}\label{eq:hybrid-weak-gradient}
\begin{aligned}
(\nabla_{w,T}^m u_h,\tau)_T
={}&
(\nabla^m u_0,\tau)_T\\
&+
\sum_{j=0}^{m-1}
(-1)^{m-1-j}
\langle
u_{g,j}-\nabla^j u_0,
\gamma_{m-1-j}(\tau)
\rangle_F ,
\end{aligned}
\end{equation}
for all $\tau\in\Sigma_k(T)$. 
No independent trace unknowns are introduced on dual faces; equivalently, the traces on dual faces are taken to be the natural traces $\nabla^j u_0$. Notice that as $\gamma_{m-1-j}(\tau)\in \mathbb P_{k_j}(F;\mathbb S^{d,j})$, we can add the projection operator:
$$\langle
u_{g,j}-\nabla^j u_0,
\gamma_{m-1-j}(\tau)
\rangle_F = \langle
Q_{F, k_j}(u_{g,j}-\nabla^j u_0),
\gamma_{m-1-j}(\tau)
\rangle_F.
$$

\subsection{Hybridized variational form}

Define
$$
a(\sigma_h,\tau_h):=(\sigma_h,\tau_h),
\qquad
b_h(\tau_h,u_h):=-(\tau_h,\nabla_w^m u_h).
$$
The hybridized SDG method is: find
$(\sigma_h,u_h)\in\Sigma_{h,k}^{-1}\times M_h^0$ such that
\begin{subequations}\label{HSDG}
\begin{align}
a(\sigma_h,\tau_h)+b_h(\tau_h,u_h)&=0,
\quad &&\forall~\tau_h\in\Sigma_{h,k}^{-1},\label{ah}\\
b_h(\sigma_h,v_h)&=-(f,v_0),
\quad &&\forall~v_h\in M_h^0.\label{bh}
\end{align}
\end{subequations}
Since the stress space $\Sigma_{h,k}^{-1}$ is fully broken on
$\mathcal K_h^{\rm R}$, the stress variable can be eliminated cell by cell.
Indeed, \eqref{ah} gives
$$
(\sigma_h,\tau_h)
=
(\tau_h,\nabla_w^m u_h)
\qquad
\forall~\tau_h\in\Sigma_{h,k}^{-1},
$$
and hence $\sigma_h=\nabla_w^m u_h.$
Substituting this identity into \eqref{bh} gives the primal hybridized form:
find $u_h\in M_h^0$ such that
\begin{equation}\label{eq:WG}
(\nabla_w^m u_h,\nabla_w^m v_h)
=
(f,v_0)
\qquad
\forall~v_h\in M_h^0 .
\end{equation}
Thus the hybridized SDG method is equivalent to a stabilization-free weak
Galerkin method. Since $\sigma_h$ is eliminated locally, the local stress space
may be enriched to $\mathbb P_{\bar k}(T;\mathbb S^{d,m})$ without adding
global unknowns.

For $u_h\in M_h$, define
\begin{equation}\label{eq:hybrid-u-norm}
\begin{aligned}
\|u_h\|_{m,h}^2
:=
|u_0|_{H^m(\mathcal K_h)}^2
+
\sum_{j=0}^{m-1}
\sum_{K\in\mathcal K_h}
\sum_{F\subset\partial K}
h_F^{2j-2m+1}
\|Q_{F,k_j}(u_{g,j}-\nabla^j u_0)\|_F^2 .
\end{aligned}
\end{equation}
\begin{lemma}\label{lem:hybrid-u-norm-is-norm}
The quantity $\|\cdot\|_{m,h}$ defined by \eqref{eq:hybrid-u-norm} is a norm on $M_h^0$.
\end{lemma}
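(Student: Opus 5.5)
The plan is to check the norm axioms directly. The one nontrivial point is definiteness. Homogeneity is clear from \eqref{eq:hybrid-u-norm}. The triangle inequality holds because $\|\cdot\|_{m,h}^2$ is a sum of squares of seminorms of linear maps of $u_h$, namely $u_h\mapsto\nabla^m u_0|_K$ and $u_h\mapsto Q_{F,k_j}(u_{g,j}-\nabla^j u_0|_K)$. So $\|\cdot\|_{m,h}$ is a Euclidean seminorm on $M_h^0$. It remains to show that $\|u_h\|_{m,h}=0$ with $u_h\in M_h^0$ forces $u_h=0$.

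Assume $\|u_h\|_{m,h}=0$. First, $|u_0|_{H^m(\mathcal K_h)}=0$ gives $u_0|_K\in\mathbb P_{m-1}(K)$ for every $K$. Second, since $u_{g,j}|_F\in\mathbb P_{k_j}(F;\mathbb S^{d,j})$, the face terms give
\[
u_{g,j}|_F=Q_{F,k_j}\big(\nabla^j u_0|_K\big)\quad\text{on }F
\]
for every $K$ with $F\subset\partial K$. On an interior face $F=\partial K^+\cap\partial K^-$ this holds from both sides, so $Q_{F,k_j}[\nabla^j u_0]=0$. On a boundary face, $u_{g,j}=0$ because $u_h\in M_h^0$, so $Q_{F,k_j}(\nabla^j u_0)=0$, which agrees with the convention $[w]=w$. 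Since $k_j\ge0$, in all cases the face mean of the jump $[\nabla^j u_0]$ vanishes, for every $F\in\mathcal F_h$ and every $j=0,\ldots,m-1$.

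Next I show $u_0=0$ by a downward induction on $j$. Instead of invoking \eqref{eq:hybrid-poincare}, I argue directly, since only the zeroth-order moments are guaranteed to be controlled. For $j=m-1$, $\nabla^{m-1}u_0$ is a constant tensor on each $K$. For piecewise constants, a vanishing face mean of the jump means the values on neighbouring elements coincide. Hence $\nabla^{m-1}u_0$ is a single constant on the connected domain $\Omega$, and the boundary-face condition makes it zero. Now suppose $\nabla^{j+1}u_0\equiv0$ on every element. Then $\nabla^j u_0$ is piecewise constant, and the same argument with the zero-mean jump and boundary conditions gives $\nabla^j u_0\equiv0$. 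Carrying this down to $j=0$ gives $u_0=0$. Then $u_{g,j}|_F=Q_{F,k_j}(0)=0$ on every face, so $u_h=0$.

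The only step needing care is the identification of the interior-face information with a jump condition. This relies on the fact that the same single-valued unknown $u_{g,j}|_F$ appears in the contributions from both neighbouring elements in \eqref{eq:hybrid-u-norm}, and that $u_{g,j}$ already lies in the range of $Q_{F,k_j}$. Once this is established, the induction only uses degree-zero face moments, which are available for every $k\ge0$.
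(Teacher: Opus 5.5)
Your argument is correct, but it is organized differently from the paper's. The paper works element by element. On a single-face cell $T$ with $F\subset\partial\Omega$, it uses $u_{g,j}|_F=0$ and Lemma~\ref{lm:Crelement} (with $k=m-1$) to get $u_0|_T=0$. Since $u_0|_K$ is a single polynomial, this gives $u_0|_K=0$ on the whole primal element. Then $u_0|_K=0$ forces $u_{g,j}=0$ on each interior face of $K$, so every neighbour is in the same position as a boundary element, and connectivity of the mesh finishes.

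You instead run a global downward induction on the derivative order $j$. At each level you use only that $\nabla^j u_0$ is piecewise constant, with zero-mean jumps on interior faces and zero means on boundary faces. Your route needs only zeroth-order face moments, which exist for every $k\ge0$ because $k_j\ge0$. It also keeps all components of the tensor $\nabla^j u_0$.

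That second point matters. The paper reduces the boundary information to $\int_F\partial_n^j u_0\,\mathrm dS=0$. But Lemma~\ref{lm:Crelement} with $k=m-1$ needs the moments of $\partial_n^i u_0$ against all of $\mathbb P_{m-1-i}(F)$, and $Q_{F,k_j}$ does not supply these when $k_j<m-1-j$. For example, take $m=2$, $k=0$, $F=\{(x,0):0\le x\le 1\}$ and $u_0=x-\tfrac12$. Then $\int_F u_0\,\mathrm dS=\int_F\partial_n u_0\,\mathrm dS=0$, yet $u_0\ne0$.

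The paper's local step is repaired exactly by your piecewise-constant induction applied on a single boundary element, using the tangential components of $Q_{F,k_j}(\nabla^j u_0)=0$. With that repair, the two proofs differ only in which loop is outermost: your induction over $j$, or the paper's propagation over elements.
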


\begin{proof}
It remains to verify definiteness. Let $u_h=(u_0,u_g)\in M_h^0$ and assume $\|u_h\|_{m,h}=0$. Then $|u_0|_{H^m(\mathcal K_h)}=0$, so $u_0|_T\in P_{m-1}(T)$ on each single-face cell $T$.

We first show $u_0=0$ on every boundary single-face cell. Let $T$ be such a cell and let $F\subset \partial T\cap\partial\Omega$ be its boundary face. Since $u_h\in M_h^0$, the boundary trace variables vanish on $F$. Hence, for each $j=0,\dots,m-1$, $Q_{F,k_j}((\nabla^j u_0):n^{\otimes j})=0$. Because $Q_{F,k_j}$ is the $L^2(F)$-projection onto $\mathbb P_{k_j}(F)$ and $k_j\ge 0$, this implies $\int_F \partial_n^j u_0\,dS=0$ for $j=0,\dots,m-1$. Applying Lemma~\ref{lm:Crelement} with $k=m-1$, we obtain $u_0|_T=0$. Since $u_0\in \mathbb P_{k+m}(K)$ on each primal element $K$, it follows that $u_0|_K=0$ whenever $T\subset K$ is a boundary single-face cell.

Now let $K'$ be a neighboring primal element sharing a primary face $F$ with such an element $K$. Since $\|u_h\|_{m,h}=0$ and $u_0|_K=0$, we get $u_{g,j}|_F=0$ for $j=0,\dots,m-1$. Repeating the same argument on $K'$, we deduce that $u_0|_{K'}=0$. Since the mesh is connected, this propagation argument shows that $u_0=0$ on $\mathcal K_h$.

Finally, returning to \eqref{eq:hybrid-u-norm}, we have $Q_{F,k_j}(u_{g,j})=0$ for all $F$ and $j=0,\dots,m-1$. Since $u_{g,j}\in \mathbb P_{k_j}(F)$, it follows that $u_{g,j}=0$. Hence $u_h=0$, and therefore $\|\cdot\|_{m,h}$ is a norm on $M_h^0$.
\end{proof}

\begin{proposition}[Coercivity of the weak gradient]\label{prop:weak-gradient-coercivity}
There exists a constant $c>0$, independent of $h$, such that
\begin{equation}\label{eq:weak-gradient-coercivity}
\|\nabla_w^m v_h\|^2
\ge
c\|v_h\|_{m,h}^2,
\qquad
\forall~v_h\in M_h^0.
\end{equation}
Consequently, the weak Galerkin formulation \eqref{eq:WG} admits a unique solution.
\end{proposition}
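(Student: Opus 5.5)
The plan mirrors the proof of Lemma~\ref{lem:discreteinfsup}, now done cell by cell on the broken space $\Sigma_{h,k}^{-1}$. Fix $v_h=(v_0,v_{g,0},\ldots,v_{g,m-1})\in M_h^0$. I would construct a test tensor $\tau_h\in\Sigma_{h,k}^{-1}$ with
\[
(\nabla_w^m v_h,\tau_h)=\|v_h\|_{m,h}^2
\qquad\text{and}\qquad
\|\tau_h\|\lesssim\|v_h\|_{m,h}.
\]
Cauchy--Schwarz then gives $\|v_h\|_{m,h}^2\le\|\nabla_w^m v_h\|\,\|\tau_h\|\lesssim\|\nabla_w^m v_h\|\,\|v_h\|_{m,h}$, which is \eqref{eq:weak-gradient-coercivity}. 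Since $\tau_h$ need not be globally continuous, the construction is local on each $K$. On $K$ I would use the degrees of freedom of Lemma~\ref{lem:dof-equal} for $\Sigma_k^{-1}(K^{\rm R})$, imposing the following conditions.
\begin{itemize}
\item Volume moments: $(\tau_h,q)_K=(\nabla^m v_0,q)_K$ for all $q\in\mathbb P_k(K;\mathbb S^{d,m})$. This is possible because $\nabla^m v_0|_K\in\mathbb P_k(K;\mathbb S^{d,m})$.
\item Remainder bubble moments: those in $\mathbb B_{\rm rem}(\Sigma_k;K)$ are set to zero.
\item Face traces: on each $F\subset\partial K$ and for $i=m-1-j$,
\[
\gamma_{i}(\tau_h)=(-1)^{i}\,h_F^{2j-2m+1}\,Q_{F,k_j}\big(v_{g,j}-\nabla^j v_0\big),
\]
imposed through the moments \eqref{Ksymtensordof1}. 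Here the sign is the one in \eqref{eq:hybrid-weak-gradient}, and $\gamma_i$ is taken from the cell $T_F$ with the outward normal of $K$.
\end{itemize}
This is admissible because the trace data lie in $\mathbb P_{k_j}(F;\mathbb S^{d,j})=\mathbb P_{(k-i)^+}(F;\mathbb S^{d,m-i-1})$, which is exactly the face test space in \eqref{Ksymtensordof1}.

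Next I sum \eqref{eq:hybrid-weak-gradient} over the cells $T_F\subset K$. The volume part gives $(\nabla^m v_0,\tau_h)_K$. The face part pairs $Q_{F,k_j}(v_{g,j}-\nabla^jv_0)$ with $\gamma_{m-1-j}(\tau_h)$, and by the choice of signs each face term becomes $h_F^{2j-2m+1}\|Q_{F,k_j}(v_{g,j}-\nabla^jv_0)\|_F^2$. There is one subtlety in the volume part. The volume moment condition only fixes $(\tau_h,q)_K$ for $q\in\mathbb P_k(K;\mathbb S^{d,m})$, and $\nabla^m v_0|_K$ is such a $q$. Hence $(\nabla^m v_0,\tau_h)_K=\|\nabla^m v_0\|_K^2$. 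Summing over $K$ gives $(\nabla_w^m v_h,\tau_h)=\|v_h\|_{m,h}^2$, with exactly the face sum of \eqref{eq:hybrid-u-norm}, since that norm counts each face once per adjacent element.

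For the bound $\|\tau_h\|_K^2\lesssim |v_0|_{H^m(K)}^2+\sum_{F,j}h_F^{2j-2m+1}\|Q_{F,k_j}(\cdot)\|_F^2$, I would use scaling. Each refined cell $T$ is mapped to a reference configuration, and the dual-basis functions of the DoFs in Lemma~\ref{lem:dof-equal} are bounded there. A face moment of size $\|g\|_F$ for $\gamma_i$ then contributes $h_F^{i+1/2}\|g\|_F$ to $\|\tau_h\|_{T}$, because $\gamma_i$ carries $i$ derivatives and a trace. With $g=h_F^{2j-2m+1}Q(\cdot)$ and $i=m-1-j$, this is $h_F^{j-m+1/2}\|Q(\cdot)\|_F$, which is the correct weight. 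The volume moments contribute $\lesssim\|\nabla^m v_0\|_K$.

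The main obstacle is making this scaling uniform over $h$. On polytopes, $K^{\rm R}$ is not affine-equivalent to a fixed reference. The equivalent DoF set of Lemma~\ref{lem:dof-equal} also depends on the choice of faces $F_\alpha$, tensors $\xi_\alpha$ and layers $r_\alpha$, and on the orthogonal splitting \eqref{eq:bubble-essential-rem-split}. To handle this, I would rely on the standing assumptions: $\mathcal K_h^{\rm R}$ is uniformly shape regular and the number of faces per element is uniformly bounded. I would also choose $\{\xi_\alpha\}$ among finitely many configurations whose Gram/dual-basis constants are bounded below by a compactness argument over admissible normal configurations. In practice I would first prove the bound for the trace-lifting part on a single $T_F$, which is affine to a reference single-face cell. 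I would then absorb the volume correction by an $L^2(K)$-stable solve in $\mathbb B_{\rm ess}(\Sigma_k;K)$, whose stability follows from $\lambda_\alpha^{r_\alpha}\gtrsim$ const on a fixed interior portion of $T_\alpha$.

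Finally, unique solvability of \eqref{eq:WG} follows from \eqref{eq:weak-gradient-coercivity}. By Lemma~\ref{lem:hybrid-u-norm-is-norm}, $\|\cdot\|_{m,h}$ is a norm on $M_h^0$, so the square system has trivial kernel.
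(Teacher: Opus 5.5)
Your proposal is essentially the paper's proof. The paper takes $\sigma_h=-\tau_h$, defined through the degrees of freedom of Lemma~\ref{lem:dof-equal} with the same volume moments, the same face traces (its sign $(-1)^{m-j}$ is your $(-1)^{m-1-j}$ flipped) and zero $\mathbb B_{\rm rem}$ moments; it obtains $-(\sigma_h,\nabla_w^m v_h)=\|v_h\|_{m,h}^2$, bounds $\|\sigma_h\|\lesssim\|v_h\|_{m,h}$ by DoF scaling, and concludes by Cauchy--Schwarz and definiteness of $\|\cdot\|_{m,h}$.

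The paper simply asserts the uniform scaling bound that you single out as the main obstacle. Your compactness argument, like that assertion, tacitly needs the face normals entering the basis $\{\xi_\alpha\}$ to stay uniformly away from parallel, since otherwise the dual basis $\{\widehat\xi_\alpha\}$ need not be uniformly bounded; shape regularity of $\mathcal K_h^{\rm R}$ alone does not guarantee this.
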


\begin{proof}
Let $v_h\in M_h^0$ be fixed. We construct
$\sigma_h\in\Sigma_{h,k}^{-1}$ locally by the degrees of freedom in
Lemma~\ref{lem:dof-equal}. On each primal element $K$, prescribe the volume
moments by
\begin{equation}\label{eq:WG-coercivity-volume-dof}
(\sigma_h,\tau)_K
=
-(\nabla^m v_0,\tau)_K
\qquad
\forall~\tau\in\mathbb P_k(K;\mathbb S^{d,m}).
\end{equation}
On each primal face $F\subset\partial K$ and for $j=0,\ldots,m-1$, prescribe the
trace moments by
\begin{equation}\label{eq:WG-coercivity-face-dof}
\gamma_{m-1-j}(\sigma_h)
=
(-1)^{m-j}
h_F^{2j-2m+1}
Q_{F,k_j}(v_{g,j}-\nabla^j v_0)
\qquad\text{on }F,
\end{equation}
in the sense of the corresponding trace degrees of freedom. Set the remaining
bubble degrees of freedom to zero. By Lemma~\ref{lem:dof-equal}, these data
determine a unique $\sigma_h\in\Sigma_{h,k}^{-1}$.

Using the definition of $\nabla_w^m$ and the choices
\eqref{eq:WG-coercivity-volume-dof}--\eqref{eq:WG-coercivity-face-dof}, we obtain
\begin{align*}
-(\sigma_h,\nabla_w^m v_h)
={}&
|v_0|_{H^m(\mathcal K_h)}^2+
\sum_{j=0}^{m-1}
\sum_{K\in\mathcal K_h}
\sum_{F\subset\partial K}
h_F^{2j-2m+1}
\left\|
Q_{F,k_j}(v_{g,j}-\nabla^j v_0)
\right\|_F^2\\
={}&
\|v_h\|_{m,h}^2 .
\end{align*}
Moreover, the scaling of the same degrees of freedom gives
\begin{equation}\label{eq:WG-coercivity-sigma-L2-bound}
\|\sigma_h\|
\lesssim
\|v_h\|_{m,h}.
\end{equation}
Therefore
$$
\|v_h\|_{m,h}^2
=
-(\sigma_h,\nabla_w^m v_h)
\le
\|\sigma_h\|\,\|\nabla_w^m v_h\|
\lesssim
\|v_h\|_{m,h}\|\nabla_w^m v_h\|.
$$
If $v_h\ne0$, division by $\|v_h\|_{m,h}$ gives
$$
\|v_h\|_{m,h}
\lesssim
\|\nabla_w^m v_h\|.
$$
The case $v_h=0$ is trivial. This proves \eqref{eq:weak-gradient-coercivity}.

The bilinear form in \eqref{eq:WG} is symmetric and positive definite on
$M_h^0$ by \eqref{eq:weak-gradient-coercivity}. Hence \eqref{eq:WG} admits a
unique solution.
\end{proof}

\subsection{Error analysis}

All constants below may depend on $m$, $d$, $k$, and shape-regularity parameters, but not on $h$. Let
$$
Q_Mu:=(Q_0u,Q_{g,0}u,\ldots,Q_{g,m-1}u)\in M_h^0
$$
be the componentwise projection. Here $Q_0u|_K$ is the $L^2(K)$ projection onto $\mathbb P_{k+m}(K)$, and $Q_{g,j}u|_F$ is the $L^2(F)$ projection of $\nabla^j u$ onto $\mathbb P_{k_j}(F;\mathbb S^{d,j})$ on primal faces.

\begin{lemma}[Weak-gradient approximation]\label{lem:weak-gradient-approx}
Let $Q_Mu\in M_h^0$ be defined as above. Assume
$u\in H^{k+m+1}(\Omega)$. Then
\begin{equation}\label{eq:weak-gradient-approx}
\|\nabla_w^m Q_Mu-\nabla^m u\|
\lesssim
h^{k+1}\|u\|_{k+m+1}.
\end{equation}
\end{lemma}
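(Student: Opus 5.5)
The plan is to compare $\nabla_w^m Q_Mu$, cell by cell, with the local $L^2$ projection of $\nabla^m u$ onto the enriched stress space, and then use the triangle inequality. For each refined cell $T=T_F\in K^{\rm R}$, let $\Pi_T$ denote the $L^2(T)$ projection onto $\Sigma_k(T)$, and let $\Pi$ be the corresponding cellwise operator. I first note that $Q_Mu\in M_h^0$, since the boundary conditions give $\nabla^j u=0$ on $\partial\Omega$ for $j\le m-1$. I will then split
\[
\nabla_w^m Q_Mu-\nabla^m u=(\nabla_w^m Q_Mu-\Pi\nabla^m u)+(\Pi\nabla^m u-\nabla^m u)
\]
and bound the two terms separately.

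For the second term, I check that $\mathbb P_k(T;\mathbb S^{d,m})\subset\Sigma_k(T)$ using the block decomposition \eqref{eq:PkSdecomp}. Blocks $(i,\ell)$ with $i<\ell$ lie in $\operatorname{Tr}(\Sigma_k;T)$, because $\lambda_c^i\mathbb P_{k-i}(F;\mathbb S^{(\ell)}_{F,m})\subset\lambda_c^i\mathbb P_{(k-i)^+}(F)\otimes\sym(\boldsymbol n_F^{i+1}\otimes\mathbb S^{d,m-i-1})$ by Lemma~\ref{lem:tn-decomp}. Blocks with $i\ge\ell$ lie in $\lambda_c^\ell\mathbb P_{k-\ell}(T;\mathbb S^{(\ell)}_{F,m})$, which is contained in $\mathbb B_{\rm ess}(T)$ or $\mathbb B_{\rm rem}(T)$. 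Hence $\Pi_T$ reproduces $\mathbb P_k$ tensors, and the Bramble--Hilbert lemma on the shape-regular cell $T$ gives $\|\Pi\nabla^m u-\nabla^m u\|\lesssim h^{k+1}|u|_{k+m+1}$.

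For the first term, set $e_T:=\nabla_{w,T}^m Q_Mu-\Pi_T\nabla^m u\in\Sigma_k(T)$. By definition \eqref{eq:hybrid-weak-gradient}, with $u_{g,j}=Q_{F,k_j}\nabla^j u$, and since $\gamma_{m-1-j}(\tau)\in\mathbb P_{k_j}(F;\mathbb S^{d,j})$ so that the projection can be removed, we get for all $\tau\in\Sigma_k(T)$
\[
(e_T,\tau)_T=(\nabla^m(Q_0u-u),\tau)_T+\sum_{j=0}^{m-1}(-1)^{m-1-j}\langle\nabla^j(u-Q_0u),\gamma_{m-1-j}(\tau)\rangle_F .
\]
Only the primal face $F$ appears, since no dual-face terms enter the definition, so no integration by parts is needed. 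I will bound the face terms with two ingredients. The first is the inverse trace inequality $\|\gamma_{m-1-j}(\tau)\|_F\lesssim h^{-(m-1-j)-1/2}\|\tau\|_T$, obtained by scaling on the finite-dimensional space $\Sigma_k(T)$, as in \eqref{eq:sigmaJ-equivalence}. The second is the continuous trace inequality $\|w\|_F^2\lesssim h^{-1}\|w\|_T^2+h|w|_{1,T}^2$ applied to $w=\nabla^j(u-Q_0u)$, combined with the approximation property of $Q_0$ on the shape-regular element $K\supset T$, which yields $\|\nabla^j(u-Q_0u)\|_F\lesssim h^{k+m+1-j-1/2}|u|_{k+m+1,K}$. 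The powers of $h$ combine to exactly $h^{k+1}$. Taking $\tau=e_T$ and summing over cells then gives $\|e\|\lesssim h^{k+1}\|u\|_{k+m+1}$.

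I expect the main obstacle to be making these scalings uniform. The inverse trace estimate must hold on the enriched space $\Sigma_k(T)$ on a possibly non-simplicial single-face cell, and the approximation estimates for $Q_0$ are posed on $K$ but restricted to $T$. Both follow from the uniform shape regularity of $\mathcal K_h^{\rm R}$, the bounded number of faces per element, and a reference-cell compactness argument, but this is the step where the assumptions of Section~\ref{sec:mesh} are actually used.
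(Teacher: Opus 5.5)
Your proposal is correct and follows the paper's argument. Both derive the same cellwise identity by replacing $Q_{F,k_j}\nabla^j u$ with $\nabla^j u$, then combine the trace bound on $\nabla^j(u-Q_0u)$ with the inverse trace estimate on $\Sigma_k(T)$ to obtain $h^{k+1}$. Your explicit split through the $L^2$ projection $\Pi$ onto $\Sigma_k(T)$, together with the check that $\mathbb P_k(T;\mathbb S^{d,m})\subset\Sigma_k(T)$, makes precise a step the paper leaves implicit: taking the supremum over $\tau\in\Sigma_k(T)$ alone controls only $\|\nabla_w^m Q_Mu-\Pi\nabla^m u\|$, not the full error.
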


\begin{proof}
Let $T=T_F\subset K$ and $\tau\in\Sigma_k(T)$. By the definition of
$\nabla_w^m$,
\begin{align*}
(\nabla_w^m Q_Mu-\nabla^m u,\tau)_T
={}&
(\nabla^m Q_0u-\nabla^m u,\tau)_T\\
&+
\sum_{j=0}^{m-1}
(-1)^{m-1-j}
\langle
Q_{g,j}u-\nabla^j Q_0u,
\gamma_{m-1-j}(\tau)
\rangle_F .
\end{align*}
Since
$\gamma_{m-1-j}(\tau)$ belongs to the trace space
$\mathbb P_{k_j}(F;\mathbb S^{d,j})$ and $Q_{g,j}$ is the $L^2(F)$ projection onto this space, we have
$$
\langle
Q_{g,j}u-\nabla^j u,
\gamma_{m-1-j}(\tau)
\rangle_F
=0.
$$
Thus we can replace $Q_{g,j}u$ by $\nabla^j u$ in the formula and obtain
\begin{align*}
(\nabla_w^m Q_Mu-\nabla^m u,\tau)_T
={}&
(\nabla^m(Q_0u-u),\tau)_T\\
&+
\sum_{j=0}^{m-1}
(-1)^{m-1-j}
\langle
\nabla^j(u-Q_0u),
\gamma_{m-1-j}(\tau)
\rangle_F .
\end{align*}

By the standard approximation estimate for the $L^2$ projection
$Q_0$ onto $\mathbb P_{k+m}(K)$,
$$
|u-Q_0u|_{H^s(K)}
\lesssim
h_K^{k+m+1-s}|u|_{H^{k+m+1}(K)},
\qquad 0\le s\le k+m+1.
$$
Hence
$$
\|\nabla^m(Q_0u-u)\|_T
\lesssim
h_K^{k+1}|u|_{H^{k+m+1}(K)}.
$$
For the face term, the trace inequality gives
$$
\|\nabla^j(u-Q_0u)\|_F
\lesssim
h_K^{k+m+1-j-1/2}|u|_{H^{k+m+1}(K)}.
$$
On the other hand, the inverse trace estimate on $\Sigma_k(T)$ gives
$$
\|\gamma_{m-1-j}(\tau)\|_F
\lesssim
h_T^{-(m-1-j)-1/2}\|\tau\|_T .
$$
Combining the last two estimates yields
$$
\left|
\langle
\nabla^j(u-Q_0u),
\gamma_{m-1-j}(\tau)
\rangle_F
\right|
\lesssim
h_T^{k+1}|u|_{H^{k+m+1}(K)}\|\tau\|_T .
$$
Together with the volume estimate, this gives
$$
|(\nabla_w^m Q_Mu-\nabla^m u,\tau)_T|
\lesssim
h_T^{k+1}|u|_{H^{k+m+1}(K)}\|\tau\|_T .
$$
Taking the supremum over $\tau\in\Sigma_k(T)$ and summing over all refined
cells proves \eqref{eq:weak-gradient-approx}.
\end{proof}

\begin{lemma}[Projected consistency]\label{lem:WG-consistency}
Let $u\in H_0^m(\Omega)$ solve
\[
(-1)^m\div^m \nabla^m u=f,
\]
and assume $\sigma:=\nabla^m u\in H^m(\Omega;\mathbb S^{d,m})$. Define
\[
\eta:=Q_\Sigma \sigma-\sigma, \qquad \Gamma_i(\sigma):=(I-Q_{F,k_i})\gamma_{m-1-i}(\sigma).
\]
Then, for any $v_h\in M_h^0$,
\begin{equation}\label{eq:projected-consistency}
(\sigma,\nabla_w^m v_h)=(f,v_0)+\mathcal E_h(u,v_h),
\end{equation}
where
\begin{equation}\label{eq:consistency-error}
\begin{aligned}
\mathcal E_h(u,v_h)
={}& \sum_{T\in\mathcal K_h^{\rm R}}(\eta,\nabla^m v_0)_T
+\sum_{j=0}^{m-1}\sum_{K\in\mathcal K_h}\sum_{F\subset\partial K}
(-1)^{m-1-j}\langle \nabla^j v_0,\Gamma_j(\sigma)\rangle_F \\
&+\sum_{j=0}^{m-1}\sum_{K\in\mathcal K_h}\sum_{F\subset\partial K}
(-1)^{m-1-j}\bigl\langle Q_{F,k_j}(v_{g,j}-\nabla^j v_0),\gamma_{m-1-j}(\eta)\bigr\rangle_F .
\end{aligned}
\end{equation}
\end{lemma}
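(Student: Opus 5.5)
The plan is to exploit that $\nabla_w^m v_h\in\Sigma_{h,k}^{-1}$, so the pairing with $\sigma$ only sees $Q_\Sigma\sigma$. Here I take $Q_\Sigma$ to be the $L^2$ projection onto the broken space $\Sigma_{h,k}^{-1}$, which acts cellwise on $\mathcal K_h^{\rm R}$. Then I expand with the definition \eqref{eq:hybrid-weak-gradient}, integrate the exact $\sigma$ by parts on primal elements, and sort the leftover face terms into the three groups of $\mathcal E_h$. Throughout, $\gamma_{m-1-j}$ on $F\subset\partial K$ is taken with respect to the outward normal of $K$; this is the convention implicit in \eqref{eq:hybrid-weak-gradient}.

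Step 1: write $(\sigma,\nabla_w^m v_h)=(Q_\Sigma\sigma,\nabla_w^m v_h)$. Apply \eqref{eq:hybrid-weak-gradient} on each $T=T_F$ with $\tau=Q_\Sigma\sigma|_T\in\Sigma_k(T)$ and substitute $Q_\Sigma\sigma=\sigma+\eta$. The volume part becomes
\[
\sum_T(\sigma,\nabla^m v_0)_T+\sum_T(\eta,\nabla^m v_0)_T ,
\]
and the second sum is already the first term of $\mathcal E_h$.

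For the face part I use that $\gamma_{m-1-j}(Q_\Sigma\sigma)\in\mathbb P_{k_j}(F;\mathbb S^{d,j})$. This lets me insert $Q_{F,k_j}$ on $v_{g,j}-\nabla^j v_0$, giving
\[
\bigl\langle Q_{F,k_j}(v_{g,j}-\nabla^j v_0),\gamma_{m-1-j}(\eta)\bigr\rangle_F
+\bigl\langle Q_{F,k_j}(v_{g,j}-\nabla^j v_0),\gamma_{m-1-j}(\sigma)\bigr\rangle_F .
\]
The first piece is exactly the third group of $\mathcal E_h$. Moving the projection onto $\sigma$ in the second piece, and using $v_{g,j}\in\mathbb P_{k_j}$, it becomes
\[
\langle v_{g,j},Q_{F,k_j}\gamma_{m-1-j}(\sigma)\rangle_F-\langle\nabla^j v_0,Q_{F,k_j}\gamma_{m-1-j}(\sigma)\rangle_F .
\]

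Step 2: integrate by parts $m$ times on each primal element $K$. Since $\sigma\in H^m(\Omega;\mathbb S^{d,m})$ has no jumps across the dual faces inside $K$, this gives
\[
(\sigma,\nabla^m v_0)_K=(-1)^m(\operatorname{div}^m\sigma,v_0)_K+\sum_{j=0}^{m-1}\sum_{F\subset\partial K}(-1)^{m-1-j}\langle\nabla^j v_0,\gamma_{m-1-j}(\sigma)\rangle_F .
\]
As a sign check, for $m=1$ this is the usual $(\sigma,\nabla v)_K=-(\operatorname{div}\sigma,v)_K+\langle\sigma\cdot\boldsymbol n,v\rangle_{\partial K}$. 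Summing over $K$ and using $(-1)^m\operatorname{div}^m\sigma=f$ produces $(f,v_0)$.

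Combining this boundary term with the term $-\langle\nabla^j v_0,Q_{F,k_j}\gamma_{m-1-j}(\sigma)\rangle_F$ from Step 1 leaves exactly $(-1)^{m-1-j}\langle\nabla^j v_0,\Gamma_j(\sigma)\rangle_F$, which is the second group of $\mathcal E_h$.

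Step 3 is the main point to check carefully: show that
\[
\sum_K\sum_{F\subset\partial K}(-1)^{m-1-j}\langle v_{g,j},Q_{F,k_j}\gamma_{m-1-j}(\sigma)\rangle_F=0 .
\]
On an interior face $F$, $v_{g,j}$ is single-valued. Because $\sigma$ is globally smooth, $\gamma_{m-1-j}(\sigma)$ computed from the two neighboring elements differs only by the sign of the outward normal, since exactly one normal contraction occurs. So the two contributions cancel after projection. On boundary faces $v_{g,j}=0$ because $v_h\in M_h^0$.

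The only delicate point is this sign and orientation bookkeeping, that is, confirming that the outward-normal convention in \eqref{eq:hybrid-weak-gradient} matches the integration-by-parts formula and produces an odd dependence on $\boldsymbol n$. Once that is verified, collecting the terms gives \eqref{eq:projected-consistency} with $\mathcal E_h$ as in \eqref{eq:consistency-error}.
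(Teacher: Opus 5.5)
Your proposal is correct and follows essentially the same route as the paper: replace $\sigma$ by $Q_\Sigma\sigma$, expand the weak gradient, integrate by parts $m$ times on each primal element, split $\gamma_{m-1-j}(\sigma)=Q_{F,k_j}\gamma_{m-1-j}(\sigma)+\Gamma_j(\sigma)$, and drop the $v_{g,j}$ terms. Your justification that those $v_{g,j}$ terms vanish (sign flip of the single normal contraction across interior faces, and $v_{g,j}=0$ on $\partial\Omega$) supplies a step the paper only asserts.
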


\begin{proof}
Since $\nabla_w^m v_h\in \Sigma_{h,k}^{-1}$ and $Q_\Sigma$ is the $L^2$ projection onto $\Sigma_{h,k}^{-1}$,
\[
(\sigma,\nabla_w^m v_h)=(Q_\Sigma\sigma,\nabla_w^m v_h).
\]
By the definition of $\nabla_w^m$,
\begin{align*}
(Q_\Sigma\sigma,\nabla_w^m v_h)
={}& \sum_{T\in\mathcal K_h^{\rm R}}(Q_\Sigma\sigma,\nabla^m v_0)_T \\
&+\sum_{j=0}^{m-1}\sum_{K\in\mathcal K_h}\sum_{F\subset\partial K}
(-1)^{m-1-j}\langle Q_{F,k_j}(v_{g,j}-\nabla^j v_0),\gamma_{m-1-j}(Q_\Sigma\sigma)\rangle_F .
\end{align*}

On the other hand, integrating by parts $m$ times on each primal element $K$ gives
\begin{align*}
(f,v_0)
={}& \sum_{K\in\mathcal K_h}(\sigma,\nabla^m v_0)_K
-\sum_{j=0}^{m-1}\sum_{K\in\mathcal K_h}\sum_{F\subset\partial K}
(-1)^{m-1-j}\langle \nabla^j v_0,\gamma_{m-1-j}(\sigma)\rangle_F .
\end{align*}
Moreover,
\[
\sum_{j=0}^{m-1}\sum_{K\in\mathcal K_h}\sum_{F\subset\partial K}
(-1)^{m-1-j}\langle v_{g,j},Q_{F,k_j}\gamma_{m-1-j}(\sigma)\rangle_F=0.
\]
Since $\gamma_{m-1-j}(\sigma)=Q_{F,k_j}\gamma_{m-1-j}(\sigma)+\Gamma_j(\sigma)$, we obtain
\begin{align*}
(f,v_0)
={}& \sum_{K\in\mathcal K_h}(\sigma,\nabla^m v_0)_K \\
&+\sum_{j=0}^{m-1}\sum_{K\in\mathcal K_h}\sum_{F\subset\partial K}
(-1)^{m-1-j}\langle v_{g,j}-\nabla^j v_0,Q_{F,k_j}\gamma_{m-1-j}(\sigma)\rangle_F \\
&-\sum_{j=0}^{m-1}\sum_{K\in\mathcal K_h}\sum_{F\subset\partial K}
(-1)^{m-1-j}\langle \nabla^j v_0,\Gamma_j(\sigma)\rangle_F .
\end{align*}
Comparing the two identities yields \eqref{eq:projected-consistency}, with $\mathcal E_h(u,v_h)$ given by \eqref{eq:consistency-error}.
\end{proof}

\begin{lemma}[Residual estimate]\label{lem:Rh-approx}
Assume $\sigma\in H^m(\Omega;\mathbb S^{d,m})$. Let
$$
\sigma:=\nabla^m u,
\quad
\eta:=Q_\Sigma\sigma-\sigma ,
\quad
\Gamma_i(\sigma):=(I-Q_{F,k_i})\gamma_{m-1-i}(\sigma),
$$
and define
\begin{equation}\label{eq:Rh-def}
\mathcal R_h(u)^2
:=
\|\eta\|^2
+
\sum_{i=0}^{m-1}
\sum_{K\in\mathcal K_h}
\sum_{F\subset\partial K}
(h_F^{2i+1}
\|\gamma_i(\eta)\|_F^2 + h_F^{2m-2i-1}\|\Gamma_{i}(\sigma)\|^2_F).
\end{equation}
Then, for all $v_h\in M_h^0$,
\begin{equation}\label{eq:consistency-error-bound}
|\mathcal E_h(u,v_h)|
\lesssim
\mathcal R_h(u)\|v_h\|_{m,h}.
\end{equation}
Moreover, if $u\in H^{s^*+m}(\Omega)$, then
\begin{equation}\label{eq:Rh-approx}
\mathcal R_h(u)
\lesssim
h^{k+1}\|u\|_{s^*+m}.
\end{equation}
Here $s^*=\max\{k+1,m\}$.
\end{lemma}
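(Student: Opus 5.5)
The proof has two parts: the bilinear bound \eqref{eq:consistency-error-bound}, which is a term-by-term Cauchy--Schwarz estimate, and the approximation bound \eqref{eq:Rh-approx}, which rests on projection estimates on the refined mesh.

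For \eqref{eq:consistency-error-bound}, I treat the three sums in \eqref{eq:consistency-error} separately.

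The first sum is bounded by $\|\eta\|\,|v_0|_{H^m(\mathcal K_h)}$.

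For the third sum, I rewrite each face pairing with the weights $h_F^{(2i+1)/2}$ and $h_F^{(2j-2m+1)/2}$, where $i=m-1-j$. Their product is $h_F^0$, so Cauchy--Schwarz gives a bound by
\[
\Bigl(\sum h_F^{2i+1}\|\gamma_i(\eta)\|_F^2\Bigr)^{1/2}\|v_h\|_{m,h}.
\]

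The second sum is the delicate one, because $\nabla^j v_0$ alone is not controlled by $\|v_h\|_{m,h}$. I first use that $\Gamma_j(\sigma)$ is $L^2(F)$-orthogonal to $\mathbb P_{k_j}(F)$. This lets me replace $\nabla^j v_0$ by $(I-Q_{F,k_j})\nabla^j v_0$.

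Next I use that $\sigma\in H^m$ is single valued, so $\gamma_{m-1-j}(\sigma)$ is single valued on interior faces. The two contributions from the elements sharing $F$ therefore combine into a pairing with a jump. On boundary faces I use the convention $[w]=w$ together with $v_{g,j}=0$.

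I would then bound $\|(I-Q_{F,k_j})\nabla^j v_0\|_F$ by inserting $v_{g,j}$, which lies in $\mathbb P_{k_j}$ and so is annihilated by $I-Q_{F,k_j}$. Since $\Gamma_j$ is orthogonal to $\mathbb P_{k_j}$, I may subtract from $\nabla^j v_0$ any polynomial of degree at most $k_j$. Taking the elementwise Taylor or averaged polynomial of $v_0$ on $K$, the trace inequality and the Bramble--Hilbert lemma give
\[
\|(I-Q_{F,k_j})\nabla^j v_0\|_F\lesssim h_F^{m-j-1/2}|v_0|_{H^m(K)},
\]
provided $k_j\ge k+m-1-j$ or the degree gap is handled. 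Here $v_0|_K\in\mathbb P_{k+m}$, so $\nabla^j v_0$ has degree $k+m-j$. When this is too high, I would use the bound $\|(I-Q)\nabla^j v_0\|_F\lesssim h^{m-j-1/2}|v_0|_{H^m(K)}$, obtained via approximation by $\mathbb P_{m-1-j}$ on $F$ and an inverse estimate.

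This step is the main obstacle. The weights pair as $h_F^{(2m-2i-1)/2}$ on $\Gamma$, with $i=j$ in the paper's indexing of $\Gamma_i$, against $h_F^{-(2m-2j-1)/2}$ on the $v_0$ factor. This yields the $\Gamma$ term of $\mathcal R_h$ times $|v_0|_{H^m(\mathcal K_h)}$. Summing the three pieces gives \eqref{eq:consistency-error-bound}.

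For \eqref{eq:Rh-approx}, I would use that $\Sigma_k(T)\supset\mathbb P_k(T;\mathbb S^{d,m})$. This inclusion holds because the trace and bubble blocks together cover every $(i,\ell)$ block at degree $k$. Hence $\|\eta\|_T\lesssim h^{k+1}|\sigma|_{k+1,T}$.

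For $h_F^{i+1/2}\|\gamma_i(\eta)\|_F$, I write $\eta=(Q_\Sigma\sigma-p)+(p-\sigma)$ with $p$ the averaged Taylor polynomial of degree $k$. The discrete part is handled by the inverse trace inequality on $\Sigma_k(T)$ used in Lemma~\ref{lem:weak-gradient-approx}. The continuous part is handled by the trace inequality applied to $\operatorname{div}^i(p-\sigma)$. This needs $\sigma\in H^{i+1}$, which explains $s^*\ge m$; the resulting rate is $h^{k+1}$ when $k+1\ge i+1$, and otherwise $p-\sigma$ is estimated in $H^m$ norms with the extra powers of $h$.

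Finally, $\Gamma_i(\sigma)$ is the projection error of a trace of $\operatorname{div}^{m-1-i}\sigma=\nabla^{2m-1-i}$-type derivatives of $u$ onto $\mathbb P_{k_i}$. This gives $h_F^{m-i-1/2}\|\Gamma_i\|_F\lesssim h^{m-i}h^{k_i+1}\|u\|$. Since $k_i+m-i\ge k+1$, this is of order $h^{k+1}$, again with regularity $s^*+m$. Summing over cells completes the proof.
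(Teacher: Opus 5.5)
\textbf{Gap in the second sum of the consistency error.} Your treatment of the first and third sums in \eqref{eq:consistency-error} is sound, and so is your proof of \eqref{eq:Rh-approx}; both follow the paper. (One slip in the $\Gamma_i$ bound: the trace and Bramble--Hilbert estimate gives $h_F^{m-i-1/2}\|\Gamma_i(\sigma)\|_F\lesssim h^{m-i+k_i}|\sigma|_{H^{k_i+m-i}(T)}$, not $h^{m-i}h^{k_i+1}$. Your inequality $k_i+m-i\ge k+1$ is exactly what the correct exponent requires, so the conclusion stands.)

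The gap is in the second sum. The elementwise estimate
\[
\|(I-Q_{F,k_j})\nabla^j v_0\|_F\lesssim h_F^{m-j-1/2}|v_0|_{H^m(K)}
\]
is false whenever $m-1-j>k_j$. This already happens for $m=2$, $k=0$, $j=0$, where $k_0=0$. Take $v_0$ linear on $K$ and non-constant on $F$. Then $|v_0|_{H^2(K)}=0$, but $(I-Q_{F,0})v_0|_F\neq0$.

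Neither of your repairs fixes this. Since $I-Q_{F,k_j}$ does not annihilate $\mathbb P_{m-1-j}(F)$, approximating by $\mathbb P_{m-1-j}(F)$ and applying an inverse estimate cannot give the bound. Inserting $v_{g,j}$ only produces $(I-Q_{F,k_j})(\nabla^j v_0-v_{g,j})$. But $\|v_h\|_{m,h}$ controls only the complementary part $Q_{F,k_j}(v_{g,j}-\nabla^j v_0)$.

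In fact, the second sum cannot be bounded by $|v_0|_{H^m(\mathcal K_h)}$ at all. For $m=2$, $k=0$ and piecewise linear $v_0$, the pairing $\langle[v_0],(I-Q_{F,0})\gamma_1(\sigma)\rangle_F$ is generically nonzero.

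\textbf{The missing idea.} The non-projected part of the $j$-th jump must be controlled by the \emph{higher-order} projected jumps contained in $\|v_h\|_{m,h}$. Keep the jump you correctly formed and work on the face rather than elementwise. Since $k_j\ge0$, and tangential derivatives of $[\nabla^j v_0]$ are components of $[\nabla^{j+1}v_0]$,
\[
\|(I-Q_{F,k_j})[\nabla^j v_0]\|_F\lesssim h_F\|[\nabla^{j+1}v_0]\|_F\le h_F\|Q_{F,k_{j+1}}[\nabla^{j+1}v_0]\|_F+h_F\|(I-Q_{F,k_{j+1}})[\nabla^{j+1}v_0]\|_F .
\]
The first term on the right is bounded by the $(j+1)$-st trace term of $\|v_h\|_{m,h}$, using the triangle inequality through the single-valued unknown $v_{g,j+1}$ (with $v_{g,j+1}=0$ on $\partial\Omega$). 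The second term is handled by recursion until $j+1=m$. At that step, a polynomial inverse trace inequality gives $h_F^{1/2}\|[\nabla^m v_0]\|_F\lesssim|v_0|_{H^m(K)}+|v_0|_{H^m(K')}$ for the elements $K,K'$ adjacent to $F$.

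With the weights $h_F^{2j-2m+1}$ this yields
\[
\bigl(\textstyle\sum_{j}\sum_{F}h_F^{2j-2m+1}\|(I-Q_{F,k_j})[\nabla^j v_0]\|_F^2\bigr)^{1/2}\lesssim\|v_h\|_{m,h}.
\]
Hence the second sum is bounded by the $\Gamma$-part of $\mathcal R_h(u)$ times the full norm $\|v_h\|_{m,h}$, not times $|v_0|_{H^m(\mathcal K_h)}$. This is the paper's argument.
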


\begin{proof}
By the Cauchy--Schwarz inequality, the first and the third terms are bounded by
$
\mathcal R_h(u)\|v_h\|_{m,h}.
$
Noting that $\Gamma_j(\sigma)$ is single-valued on $F$. 
\begin{align*}
  \mathcal{H}_h:=  &\sum_{j=0}^{m-1}
\sum_{K\in\mathcal K_h}
\sum_{F\subset\partial K}
(-1)^{m-1-j}
\langle
\nabla^j v_0,
\Gamma_{j}(\sigma)
\rangle_F\\  = &  \sum_{j=0}^{m-1}
\sum_{F\in \mathcal{F}_h}
(-1)^{m-1-j}
\langle
[\nabla^j v_0],
\Gamma_{j}(\sigma)
\rangle_F\\
=&\sum_{j=0}^{m-1}
\sum_{F\in \mathcal{F}_h}
(-1)^{m-1-j}
\langle
(I-Q_{F,k_j})[\nabla^j v_0],
\Gamma_{j}(\sigma)
\rangle_F.
\end{align*}
By repeating the following step up to $j=m-1$
\begin{align*} 
&\|(I-Q_{F,k_j})[\nabla^jv_0]\|_F\lesssim h_F\|[\nabla^{j+1}v_0]\|_F\\ \leq & h_F\|Q_{F,k_{j+1}}[\nabla^{j+1}v_0]\|_F + h_F\|(I-Q_{F,k_{j+1}})[\nabla^{j+1}v_0]\|_F,
\end{align*}
we have
\[
\left(\sum_{j=0}^{m-1}
\sum_{F\in \mathcal{F}_h}h_F^{2j-2m+1}\|(I-Q_{F,k_j})[\nabla^j v_0]\|^2_F\right)^{1/2}\lesssim \|v_h\|_{m,h}.
\]
Thus we have $\mathcal{H}_h\lesssim \mathcal{R}_h(u)\|v_h\|_{m,h}$, which proves \eqref{eq:consistency-error-bound}.
It remains to estimate $\mathcal R_h(u)$. Since $Q_\Sigma$ is the local $L^2$ projection onto a space containing $\mathbb P_k(T;\mathbb S^{d,m})$, the standard projection estimate gives
$$
\|\eta\|
\lesssim
h^{k+1}\|\sigma\|_{H^{k+1}(\Omega)}
=
h^{k+1}\|u\|_{H^{k+m+1}(\Omega)} .
$$
For the face terms, the scaled trace inequality and the local projection estimate give, for $F\subset\partial K$ and $0\le i\le m-1$,
$$
h_F^{i+1/2}\|\gamma_i(\eta)\|_F
\lesssim
h^{k+1}\|\sigma\|_{H^{s^*}(\omega_F)},~h_F^{m-i-1/2}\|\Gamma_{i}(\sigma)\|_F\lesssim h^{k+1}\|\sigma\|_{H^{s^*}(\omega_F)},
$$
where $\omega_F$ is the union of refined cells adjacent to $F$. Hence
$$
\sum_{i=0}^{m-1}
\sum_{K\in\mathcal K_h}
\sum_{F\subset\partial K}
h_F^{2i+1}
\|\gamma_i(\eta)\|_F^2
\lesssim
h^{2k+2}\|\sigma\|_{H^{s^*}(\Omega)}^2
=
h^{2k+2}\|u\|_{H^{s^*+m}(\Omega)}^2 .
$$
Combining the volume and face estimates proves \eqref{eq:Rh-approx}.
\end{proof}

\begin{theorem}[Energy error estimate]\label{thm:energy-error}
Let $u\in H_0^m(\Omega)\cap H^{s^*+m}(\Omega)$ and $\sigma=\nabla^m u\in H^m(\Omega;\mathbb S^{d,m})$ solve
$$
(-1)^m\operatorname{div}^m\nabla^m u=f,
$$
and let $u_h\in M_h^0$ solve \eqref{eq:WG}. Then
\begin{equation}\label{eq:energy-error}
\|Q_Mu-u_h\|_{m,h}
\lesssim
h^{k+1}\|u\|_{s^*+m}.
\end{equation}
\end{theorem}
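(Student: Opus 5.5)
The argument is the standard Strang-type one for the weak Galerkin formulation \eqref{eq:WG}, combining the coercivity of Proposition~\ref{prop:weak-gradient-coercivity} with the consistency identity of Lemma~\ref{lem:WG-consistency}. Set $e_h:=Q_Mu-u_h\in M_h^0$; this is admissible because $u\in H_0^m(\Omega)$ makes all boundary traces $\nabla^j u$ vanish, so $Q_{g,j}u=0$ on $\partial\Omega$. By \eqref{eq:weak-gradient-coercivity},
\[
c\|e_h\|_{m,h}^2\le \|\nabla_w^m e_h\|^2=(\nabla_w^m Q_Mu-\nabla_w^m u_h,\nabla_w^m e_h).
\]
For the discrete part we use \eqref{eq:WG} with $v_h=e_h$, which gives $(\nabla_w^m u_h,\nabla_w^m e_h)=(f,e_0)$. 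For the continuous part we insert $\sigma=\nabla^m u$ and apply \eqref{eq:projected-consistency} with $v_h=e_h$, which gives $(\sigma,\nabla_w^m e_h)=(f,e_0)+\mathcal E_h(u,e_h)$. Subtracting the two, the $(f,e_0)$ terms cancel and we obtain the error equation
\[
\|\nabla_w^m e_h\|^2=(\nabla_w^m Q_Mu-\nabla^m u,\nabla_w^m e_h)+\mathcal E_h(u,e_h).
\]

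We bound the two terms on the right separately. The first is handled by Cauchy--Schwarz and Lemma~\ref{lem:weak-gradient-approx}: it is at most $h^{k+1}\|u\|_{k+m+1}\|\nabla_w^m e_h\|$. For the second, \eqref{eq:consistency-error-bound} and \eqref{eq:Rh-approx} give $|\mathcal E_h(u,e_h)|\lesssim h^{k+1}\|u\|_{s^*+m}\|e_h\|_{m,h}$.

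To close the estimate, we first control $\|\nabla_w^m e_h\|$ from above by $\|e_h\|_{m,h}$. Testing the definition \eqref{eq:hybrid-weak-gradient} with $\tau$ on each $T$ and using the inverse trace inequality $\|\gamma_{m-1-j}(\tau)\|_F\lesssim h_T^{-(m-1-j)-1/2}\|\tau\|_T$ on $\Sigma_k(T)$, each face term is bounded by $h_F^{j-m+1/2}\|Q_{F,k_j}(e_{g,j}-\nabla^j e_0)\|_F\|\tau\|_T$. This gives $\|\nabla_w^m e_h\|\lesssim\|e_h\|_{m,h}$, so both error-equation terms are $\lesssim h^{k+1}\|u\|_{s^*+m}\|e_h\|_{m,h}$, using $k+m+1\le s^*+m$. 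Combining this with $c\|e_h\|_{m,h}^2\le\|\nabla_w^m e_h\|^2$ and dividing by $\|e_h\|_{m,h}$ (the case $e_h=0$ being trivial) yields \eqref{eq:energy-error}.

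The main check is that the hypotheses of Lemmas~\ref{lem:WG-consistency} and~\ref{lem:Rh-approx} are met, namely $\sigma\in H^m$, which is assumed, and that the regularity indices match. The real analytic content sits in the earlier lemmas, so this proof is essentially an assembly of them.
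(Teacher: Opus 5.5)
Your proposal is correct and follows the paper's proof. You form the error equation by subtracting \eqref{eq:WG} from the projected consistency identity, test it with $e_h=Q_Mu-u_h$, and invoke Proposition~\ref{prop:weak-gradient-coercivity} together with Lemmas~\ref{lem:weak-gradient-approx} and~\ref{lem:Rh-approx}; the one addition is the explicit continuity bound $\|\nabla_w^m e_h\|\lesssim\|e_h\|_{m,h}$ (via the inverse trace inequality on $\Sigma_k(T)$) used to close the estimate, a step the paper leaves implicit and which Young's inequality would handle equally well.
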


\begin{proof}
Set $u_I:=Q_Mu$. Subtracting \eqref{eq:WG} from \eqref{eq:projected-consistency} gives, for all $v_h\in M_h^0$,
\begin{equation}\label{eq:error-equation}
(\nabla_w^m(u_I-u_h),\nabla_w^m v_h)
=
(\nabla_w^m Q_Mu-\nabla^m u,\nabla_w^m v_h)
+
\mathcal E_h(u,v_h).
\end{equation}
Taking $v_h=u_I-u_h$ and using Proposition~\ref{prop:weak-gradient-coercivity}, we obtain
\begin{align*}
\|Q_Mu-u_h\|_{m,h}
&\lesssim
\|\nabla_w^m Q_Mu-\nabla^m u\|
+
\sup_{0\ne v_h\in M_h^0}
\frac{|\mathcal E_h(u,v_h)|}{\|v_h\|_{m,h}} .
\end{align*}
The first term is bounded by Lemma~\ref{lem:weak-gradient-approx}, and the second term is bounded by Lemma~\ref{lem:Rh-approx}. This proves \eqref{eq:energy-error}.
\end{proof}

An $L^2$-error estimate can be derived by a standard Aubin--Nitsche duality argument. Since this argument is routine, we omit the details.

\begin{remark}[Low-regularity error analysis]\rm
The assumption $\sigma=\nabla^m u\in H^m(\Omega;\mathbb S^{d,m})$ in
Lemmas~\ref{lem:WG-consistency}, \ref{lem:Rh-approx}, and
Theorem~\ref{thm:energy-error} is technical and can be relaxed. By relating the present method to the $H^m$-conforming VEM in~\cite{ChenHuangWei2022}, and following the approach in~\cite{HuZhangSPP}, one can obtain error estimates under lower regularity assumptions. This low-regularity analysis is not the main focus of this paper, and the details are omitted.
\end{remark}

\section{Numerical Experiments}\label{sec:numerexperiments}

We present numerical results for the hybridized SDG method \eqref{HSDG} applied to the polyharmonic problem \eqref{intro:polyharmonic}. 
The experiments verify the predicted convergence rates and test robustness on two-dimensional polygonal meshes and three-dimensional tetrahedral meshes.

\subsection{Test problems and mesh configurations}

On the unit square $\Omega=(0,1)^2$, we use the manufactured solutions
$$
u(x,y)=\sin^m(\pi x)\sin^m(\pi y),
\qquad m=2,3.
$$
The right-hand side is chosen accordingly. The errors are measured by
$$
\|\sigma-\sigma_h\|_{0,h},\qquad
|u-u_0|_{H^m(\mathcal K_h)},\qquad
\|u-u_0\|.
$$
Here $h$ denotes the mesh size and NT denotes the number of elements.

In two dimensions, we test two families of polygonal meshes. The first consists of convex polygonal meshes generated by connecting element centroids of a triangular mesh. The second consists of concave polygonal meshes constructed from a perturbed square grid. Representative meshes are shown in Figure~\ref{fig:mesh_samples}.

For the convex polygonal meshes used in Fig \ref{fig:mesh_samples}, the number of pairwise non-parallel
face-normal directions is \(\nu=3\). Thus
\(\lfloor m/\nu\rfloor=0\) for \(m=2\), while
\(\lfloor m/\nu\rfloor=1\) for \(m=3\). For the concave polygonal meshes, we have
\(\nu=4\), and hence \(\lfloor m/\nu\rfloor=0\) for both \(m=2\) and \(m=3\).
Therefore, bubble enrichment is not needed in these two-dimensional tests, except
for the triharmonic experiment on convex polygonal meshes. The low-order cases
still require trace enrichment when \(k<m-1\).

\begin{figure}[htbp]
	\centering
	\includegraphics[width=0.20\textwidth]{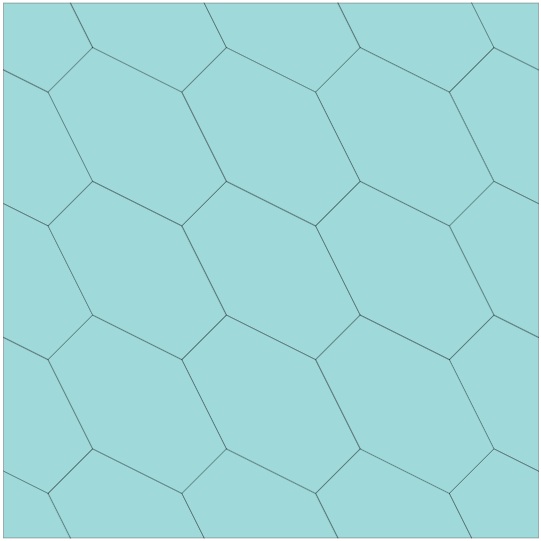}
	\includegraphics[width=0.20\textwidth]{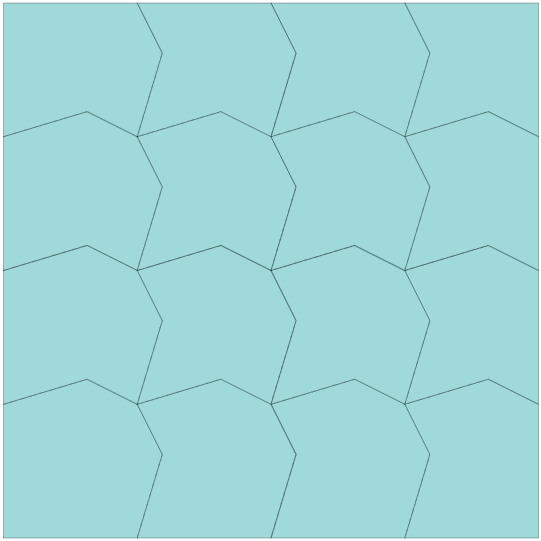}
	\caption{Representative two-dimensional polygonal meshes: convex polygonal mesh (left) and concave polygonal mesh (right).}
	\label{fig:mesh_samples}
\end{figure}

\subsection{Two-dimensional biharmonic problem}

Tables~\ref{tab:biharmonic_polygonal}--\ref{tab:biharmonic_concave} report the errors for the biharmonic problem, corresponding to $m=2$, on convex and concave polygonal meshes. The energy-type errors converge with order $k+1$, and the $L^2$ error $\|u-u_0\|$ shows higher-order convergence.

\begin{table}[htbp]
	\centering
	\caption{Biharmonic problem ($m=2, d=2$) on convex polygonal meshes.}
	\label{tab:biharmonic_polygonal}
	\small
	\begin{tabular}{ccccccccc}
		\toprule
		$k$ & $h$ & NT &
		$\|\sigma-\sigma_h\|_{0,h}$ & Order &
		$|u-u_0|_{H^2(\mathcal K_h)}$ & Order & $\|u-u_0\|$ & Order\\
		\midrule
		0 &1.250e-01    &81     &3.77e+00   &-      &1.85e+01   &-      &3.53e-03   &-\\
		&6.250e-02    &289    &1.90e+00   &0.99   &9.51e+00   &0.96   &7.33e-04   &2.27\\
		&3.125e-02    &1089   &9.54e-01   &0.99   &4.79e+00   &0.99   &1.73e-04   &2.08\\
		&1.562e-02    &4225   &4.78e-01   &1.00   &2.40e+00   &1.00   &4.27e-05   &2.02\\
		&7.812e-03    &16641  &2.39e-01   &1.00   &1.20e+00   &1.00   &1.06e-05   &2.00\\
		\midrule
		1 &1.250e-01 & 81 & 9.28e-01 & - & 6.28e-01 & - & 1.93e-04 & - \\
		&6.250e-02 & 289 & 2.38e-01 & 1.97 & 1.66e-01 & 1.92 & 1.26e-05 & 3.93 \\
		&3.125e-02 & 1089 & 5.96e-02 & 1.99 & 4.26e-02 & 1.96 & 8.06e-07 & 3.97 \\
		&1.562e-02 & 4225 & 1.49e-02 & 2.00 & 1.08e-02 & 1.98 & 5.09e-08 & 3.98 \\
		&7.812e-03 & 16641 & 3.73e-03 & 2.00 & 2.71e-03 & 1.99 & 3.21e-09 & 3.98 \\
		\bottomrule
	\end{tabular}
\end{table}

\begin{table}[htbp]
	\centering
	\caption{Biharmonic problem ($m=2, d=2$) on concave polygonal meshes.}
	\label{tab:biharmonic_concave}
	\small
	\begin{tabular}{ccccccccc}
		\toprule
		$k$ & $h$ & NT &
		$\|\sigma-\sigma_h\|_{0,h}$ & Order &
		$|u-u_0|_{H^2(\mathcal K_h)}$  & Order & $\|u-u_0\|$ & Order\\
		\midrule
		0 & 1.362e-01 & 64 & 2.00e+01 & - & 3.63e+00 & - & 1.89e-03 & - \\
		& 6.811e-02 & 256 & 1.01e+01 & 0.98 & 1.84e+00 & 0.98 & 2.54e-04 & 2.89 \\
		& 3.405e-02 & 1024 & 5.08e+00 & 0.99 & 9.25e-01 & 0.99 & 7.00e-05 & 1.86 \\
		& 1.703e-02 & 4096 & 2.54e+00 & 1.00 & 4.63e-01 & 1.00 & 2.07e-05 & 1.76 \\
		& 8.513e-03 & 16384 & 1.27e+00 & 1.00 & 2.32e-01 & 1.00 & 5.67e-06 & 1.87 \\
		\midrule
		1 & 1.362e-01 & 64 & 1.12e+00 & - & 6.80e-01 & - & 1.98e-04 & - \\
		& 6.811e-02 & 256 & 2.69e-01 & 2.05 & 1.73e-01 & 1.98 & 1.29e-05 & 3.94 \\
		& 3.405e-02 & 1024 & 6.55e-02 & 2.04 & 4.33e-02 & 1.99 & 8.27e-07 & 3.97 \\
		& 1.703e-02 & 4096 & 1.61e-02 & 2.02 & 1.08e-02 & 2.00 & 5.26e-08 & 3.97 \\
		& 8.513e-03 & 16384 & 4.01e-03 & 2.01 & 2.71e-03 & 2.00 & 5.64e-09 & 3.22 \\
		\bottomrule
	\end{tabular}
\end{table}

\subsection{Two-dimensional triharmonic problem}

Tables~\ref{tab:triharmonic_polygonal}--\ref{tab:triharmonic_concave} report the errors for the triharmonic problem, corresponding to $m=3$, on convex and concave polygonal meshes. 
These tests include trace enrichment for $k=0,1$, but no bubble enrichment is required because $\lfloor m/\nu\rfloor=0$ for both mesh families. The energy-type errors converge with order $k+1$, and the $L^2$ error $\|u-u_0\|$ shows higher-order convergence.

\begin{table}[htbp]
	\centering
	\caption{Triharmonic problem ($m=3, d=2$) on convex polygonal meshes.}
	\label{tab:triharmonic_polygonal}
	\small
	\begin{tabular}{ccccccccc}
		\toprule
		$k$ & $h$ & NT &
		$\|\sigma-\sigma_h\|_{0,h}$ & Order &
		$|u-u_0|_{H^3(\mathcal K_h)}$ & Order &
		$\|u-u_0\|$ & Order \\
		\midrule
		0 & 2.500e-01 & 25 & 8.34e+02 & - & 1.01e+02 & - & 1.13e-02 & - \\
		& 1.250e-01 & 81 & 3.92e+02 & 1.09 & 5.37e+01 & 0.91 & 1.24e-03 & 3.18 \\
		& 6.250e-02 & 289 & 1.59e+02 & 1.30 & 2.73e+01 & 0.97 & 3.66e-04 & 1.76 \\
		& 3.125e-02 & 1089 & 4.96e+01 & 1.68 & 1.38e+01 & 0.99 & 1.13e-04 & 1.69 \\
		& 1.562e-02 & 4225 & 1.54e+01 & 1.68 & 6.91e+00 & 1.00 & 3.24e-05 & 1.81 \\
		\midrule
		1 & 5.000e-01 & 9 & 1.55e+03 & - & 1.44e+02 & - & 7.58e-02 & - \\
		& 2.500e-01 & 25 & 3.83e+02 & 2.01 & 5.32e+01 & 1.43 & 3.02e-03 & 4.65 \\
		& 1.250e-01 & 81 & 8.94e+01 & 2.10 & 1.45e+01 & 1.88 & 1.39e-04 & 4.45 \\
		& 6.250e-02 & 289 & 2.71e+01 & 1.72 & 3.86e+00 & 1.91 & 9.22e-06 & 3.91 \\
		& 3.125e-02 & 1089 & 7.48e+00 & 1.86 & 9.88e-01 & 1.96 & 6.29e-07 & 3.87 \\
		\midrule
		2 & 5.000e-01 & 9 & 2.06e+03 & - & 8.51e+01 & - & 9.98e-03 & - \\
		& 2.500e-01 & 25 & 4.25e+02 & 2.28 & 1.77e+01 & 2.26 & 3.45e-04 & 4.85 \\
		& 1.250e-01 & 81 & 5.51e+01 & 2.95 & 2.73e+00 & 2.70 & 9.34e-06 & 5.21 \\
		& 6.250e-02 & 289 & 6.84e+00 & 3.01 & 3.66e-01 & 2.90 & 1.81e-07 & 5.69 \\
		& 3.125e-02 & 1089 & 8.53e-01 & 3.00 & 4.70e-02 & 2.96 & 7.01e-09 & 4.69 \\
		\bottomrule
	\end{tabular}
\end{table}

\begin{table}[htbp]
	\centering
	\caption{Triharmonic problem ($m=3, d=2$) on concave polygonal meshes.}
	\label{tab:triharmonic_concave}
	\small
	\begin{tabular}{ccccccccc}
		\toprule
		$k$ & $h$ & NT &
		$\|\sigma-\sigma_h\|_{0,h}$ & Order &
		$|u-u_0|_{H^3(\mathcal K_h)}$ & Order & $\|u-u_0\|$ & Order\\
		\midrule
		0 &2.724e-01    &16     &1.48e+03   &-   &1.04e+02   &-   &2.37e-02   &-\\
		&1.362e-01    &64     &6.16e+02   &1.27   &5.47e+01   &0.92   &5.21e-03   &2.19\\
		&6.811e-02    &256    &2.08e+02   &1.57   &2.78e+01   &0.98   &1.16e-03   &2.17\\
		&3.405e-02    &1024   &7.58e+01   &1.45   &1.40e+01   &0.99   &2.83e-04   &2.03\\
		&1.703e-02    &4096   &3.29e+01   &1.20   &6.99e+00   &1.00   &7.86e-05   &1.85\\
		\midrule
		1 &5.449e-01    &4      &2.34e+03   &-   &1.49e+02   &-   &2.94e-01   &-\\
		&2.724e-01    &16     &6.28e+02   &1.90   &5.20e+01   &1.52   &3.72e-03   &6.30\\
		&1.362e-01    &64     &1.31e+02   &2.26   &1.51e+01   &1.78   &1.90e-04   &4.29\\
		&6.811e-02    &256    &2.85e+01   &2.20   &3.93e+00   &1.94   &1.78e-05   &3.42\\
		&3.405e-02    &1024   &6.85e+00   &2.06   &9.92e-01   &1.99   &1.42e-06   &3.64\\
		\midrule
		2 &1.000e+00    &1      &2.74e+04   &-      &1.82e+02   &-      &2.40e-01   &-\\
		&5.449e-01    &4      &5.13e+03   &2.76   &1.01e+02   &0.98   &7.82e-02   &1.85\\
		&2.724e-01    &16     &6.25e+02   &3.04   &1.83e+01   &2.46   &6.44e-04   &6.92\\
		&1.362e-01    &64     &7.79e+01   &3.01   &2.62e+00   &2.80   &1.13e-05   &5.83\\
		&6.811e-02    &256    &1.04e+01   &2.90   &3.35e-01   &2.97   &1.88e-07   &5.91\\
		\bottomrule
	\end{tabular}
\end{table}

\subsection{Three-dimensional biharmonic problem}

We also test the biharmonic problem in three dimensions on the unit cube $\Omega=(0,1)^3$, with exact solution
$$
u(x,y,z)=\sin^2(\pi x)\sin^2(\pi y)\sin^2(\pi z).
$$
The domain is discretized by tetrahedral meshes. Although tetrahedra are simplicial elements rather than general polyhedra, they are polytopes and provide a useful three-dimensional test of the dimension-independent construction. For a tetrahedron, $\nu=4$, and hence
$
\left\lfloor\frac{m}{\nu}\right\rfloor
=
\left\lfloor\frac{2}{4}\right\rfloor
=0.
$
Thus no bubble enrichment is needed in this test. Table~\ref{tab:biharmonic_3D} shows that the method retains the predicted convergence behavior in three dimensions: the energy-type errors converge with order $k+1$, and the $L^2$ error shows higher-order convergence.

\begin{table}[htbp]
	\centering
	\caption{Biharmonic problem ($m=2, d=3$) on tetrahedral meshes.}
	\label{tab:biharmonic_3D}
	\small
	\begin{tabular}{ccccccccc}
		\toprule
		$k$ & $h$ & NT &
		$\|\sigma-\sigma_h\|_{0,h}$ & Order &
		$|u-u_0|_{H^2(\mathcal K_h)}$ & Order &
		$\|u-u_0\|$ & Order \\
		\midrule
		0 & 1.376e-01 & 384 & 1.43e+01 & - & 6.82e+00 & - & 4.40e-02 & - \\
		& 9.172e-02 & 1296 & 7.95e+00 & 1.45 & 4.46e+00 & 1.05 & 1.89e-02 & 2.09 \\
		& 6.879e-02 & 3072 & 5.47e+00 & 1.30 & 3.32e+00 & 1.03 & 1.05e-02 & 2.03 \\
		& 5.503e-02 & 6000 & 4.18e+00 & 1.21 & 2.64e+00 & 1.02 & 6.73e-03 & 2.01 \\
		\midrule
		1 & 1.376e-01 & 384 & 9.86e+00 & - & 1.50e+00 & - & 3.93e-03 & - \\
		& 9.172e-02 & 1296 & 4.45e+00 & 1.96 & 6.80e-01 & 1.95 & 8.04e-04 & 3.91 \\
		& 6.879e-02 & 3072 & 2.52e+00 & 1.98 & 3.85e-01 & 1.98 & 2.57e-04 & 3.96 \\
		& 5.503e-02 & 6000 & 1.62e+00 & 1.99 & 2.47e-01 & 1.99 & 1.06e-04 & 3.98 \\
		\bottomrule
	\end{tabular}
\end{table}

\bibliographystyle{abbrv}
 \bibliography{./paper.bib,./references.bib}
\end{document}

%% file: mysetting.tex
\usepackage{times,amsmath,amsbsy,amssymb,amscd,mathrsfs}
\usepackage{slashbox}\usepackage{booktabs,graphicx,subfigure,epstopdf,wrapfig,chemarrow}

\usepackage{algorithm2e} 
\usepackage{multicol,multirow}
\usepackage{mathtools}
\usepackage[usenames,dvipsnames,svgnames,table]{xcolor}
\usepackage[all]{xy}
\usepackage{wrapfig}
\usepackage{tcolorbox}
\usepackage{stmaryrd}

\usepackage{tikz,tikz-cd}
\usepackage[utf8]{inputenc}
\usepackage{pgfplots} 
\usepackage{pgfgantt}
\usepackage{pdflscape}
\pgfplotsset{compat=newest} 
\pgfplotsset{plot coordinates/math parser=false}
\newlength\fwidth

\usepackage[numbered]{mcode}

\definecolor{myBlue}{rgb}{0.0,0.0,0.55}
\usepackage[pdftex,colorlinks=true,citecolor=myBlue,linkcolor=myBlue]{hyperref}

\usepackage[hyperpageref]{backref}
\usepackage{circledsteps}

\newcommand{\LC}[1]{\textcolor{red}{#1}}

\usepackage{comment,enumerate,multicol,xspace}

  \newcounter{mnote}
  \let\oldmarginpar\marginpar
    \renewcommand\marginpar[1]{\-\oldmarginpar[\raggedleft\footnotesize #1]%
    {\raggedright\footnotesize #1}}

\newcommand{\breakline}{
\begin{center}
------------------------------------------------------------------------------------------------------------
\end{center}
}

\newtheorem{theorem}{Theorem}[section]
\newtheorem{lemma}[theorem]{Lemma}

\newtheorem{proposition}[theorem]{Proposition}
\newtheorem{definition}[theorem]{Definition}
\newtheorem{example}[theorem]{Example}

\newtheorem{remark}[theorem]{Remark}

\renewcommand{\div}{\operatorname{div}}

\newcommand{\vertiii}[1]{{\left\vert\kern-0.25ex\left\vert\kern-0.25ex\left\vert #1 
    \right\vert\kern-0.25ex\right\vert\kern-0.25ex\right\vert}}